\numberwithin{equation}{section}
\newtheorem{theorem}{Theorem}[section]
\newtheorem{proposition}[theorem]{Proposition}
\newtheorem{lemma}[theorem]{Lemma}
\theoremstyle{definition}
\newtheorem{definition}[theorem]{Definition}
\newtheorem{remark}[theorem]{Remark}
\def\R{\mathbb R}
\def\N{\mathbb N}
\begin{document}

\title[Weakly coupled Schr\"{o}dinger system]{Nonstandard solutions for a perturbed nonlinear Schr\"{o}dinger system with small coupling coefficients\protect\thanks{A perturbed nonlinear Schr\"{o}dinger system.}}

\author{Xiaoming An}

 \address{School of Mathematics and Statistics\, \&\, Guizhou University of Financeand Economics, Guiyang, 550025, P. R. China}
\email{651547603@qq.com}

\author{Chunhua Wang$^*$}

\address{School of Mathematics and Statistics\, \&\, Hubei Key Laboratory of Mathematical Sciences, Central China Normal
University, Wuhan, 430079, P. R. China}

\email{chunhuawang@mail.ccnu.edu.cn}
%

%
\thanks{$^*$ Corresponding author: Chunhua Wang}

\begin{abstract}
{In this paper, we consider the following weakly coupled nonlinear Schr\"{o}dinger system
\begin{equation*}
  \left\{
    \begin{array}{ll}
       -\epsilon^{2}\Delta u_1 + V_1(x)u_1 = |u_1|^{2p - 2}u_1 + \beta|u_1|^{p - 2}|u_2|^pu_1, & x\in \mathbb{R}^N,\vspace{0.12cm}\\
       -\epsilon^{2}\Delta u_2 + V_2(x)u_2 = |u_2|^{2p - 2}u_2 + \beta|u_2|^{p - 2}|u_1|^pu_2, & x\in \mathbb{R}^N,
    \end{array}
  \right.
\end{equation*}
where $\epsilon>0$, $\beta\in\mathbb{R}$ is a coupling constant, $2p\in (2,2^*)$ with $2^* = \frac{2N}{N - 2}$ if $N\geq 3$ and $+\infty$ if $N = 1,2$, $V_1$ and $V_2$ belong to $C(\mathbb{R}^N,[0,\infty))$.

When $p\ge 2$ and $\beta>0$ is suitably small,
we show that the problem has a family of nonstandard solutions $\{w_{\epsilon} = (u^1_{\epsilon},u^2_{\epsilon}):0<\epsilon<\epsilon_{0}\}$ concentrating synchronously at the common local minimum of $V_1$ and $V_2$.
 All decay rates of $V_i(i=1,2)$ are admissible
 and we can allow that $\beta>0$ is close to $0$ in this paper.
Moreover, the location of concentration points is given by local Pohozaev identities. Our proofs are based on variational methods and the
penalized technique.}

{\bf Key words: }{Concentrating synchronously;  penalized technique; Schr\"{o}dinger systems; variational methods;
small coupling coefficients.}
\end{abstract}

\maketitle
\section{Introduction}\label{s1}
In this paper, we study the following nonlinear Schr\"odinger system
\begin{align}\label{eq1.1}
  \left\{
    \begin{array}{ll}
       -\epsilon^{2}\Delta u_1 + V_1(x)u_1 = |u_1|^{2p - 2}u_1 + \beta|u_1|^{p - 2}|u_2|^pu_1, & x\in \mathbb{R}^N,\vspace{0.12cm}\\
       -\epsilon^{2}\Delta u_2 + V_2(x)u_2 = |u_2|^{2p - 2}u_2 + \beta|u_2|^{p - 2}|u_1|^pu_2, & x\in \mathbb{R}^N,
    \end{array}
  \right.
\end{align}
where $N \geq 1$, $V_i\in C(\mathbb{R}^N,[0,\infty)), i =1,2$,
$\beta \in \R$ is a coupling constant. This type of systems arise when one considers standing waves of time-dependent $k$-coupled Schr\"odinger systems with $k = 2$ of the form
\begin{equation}\label{eq1.2}
  \left\{
    \begin{array}{ll}
      i\epsilon\frac{\partial\psi_j}{\partial t} = \epsilon^2\Delta\psi_j - U_j(x)\psi_j + \alpha_j|\psi_j|^{2p - 2}\psi_j + |\psi_j|^{p - 2}\psi_j\sum_{l = 1,s\neq j}^k\beta_{js}|\psi_{l}|^p
       , & \text{in}\ \mathbb{R}^N, \vspace{0.12cm}\\
      \psi_j = \psi_j(x,t)\in\mathbb{C},\ t > 0,\ j = 1,\ldots,k, &
    \end{array}
  \right.
\end{equation}
where $\epsilon > 0$, $i$ denotes the imaginary part, $\alpha_j$ and $\beta_{js} = \beta_{sj}$ are coupling constants.

In Physics, system \eqref{eq1.2} is applied to study the nonlinear optics in isotropic materials, for instance the propagation pulses in fiber. Because of the appearance of birefringence, a pulse $\psi$ tends to be spilt into two pulses ($\psi_1,\psi_2$) in the two polarization directions, but Menyuk \cite{M} proved that the two components $\psi_1,\psi_2$ in a birefringence optical fiber are governed by the two coupled nonlinear Schr\"odinger system in \eqref{eq1.2}.

System $\eqref{eq1.2}$  also has applications in Bose-Einstein condensates theory. For example, when $k = 2$ in \eqref{eq1.2}, $\psi_1$ and $\psi_2$ are the wave functions of the corresponding condensates and $\beta$ is the interspecies scattering length. Physically,
  $\beta>0$ is known as
 the attractive case, and the components of states tend to get along with each other leading to synchronization. Whereas $\beta<0$ is the repulsive case,
  the components tend to segregate each other, leading to phase separations.

In recent years, a lot of works such as the existence of ground states, least energy solitary waves, infinitely many segregated and synchronized solutions and so on
have been done for \eqref{eq1.1} in the case that $\epsilon > 0$ is fixed, see \cite{WW,S3,S1,SW,M2,PW,LW3,LW1,LLW,B5,BW,B1,AE,MMP,M} and their references therein. Hereafter we say a vector function $w = (u_1,u_2)$ is nonstandard if $u_1,u_2\neq0$.

%
%
%
%
%
%
%
%

In the last three decades, a large amount of semiclassical analysis has been done on problem \eqref{eq1.1} with $\beta = 0$, i.e.,
\begin{equation*}\label{P}
-\epsilon^{2}\Delta u + V(x) u = |u|^{2p - 2}u,\,\,\,x\in \mathbb{R}^N,
\end{equation*}
where $V$ is an external potential. We refer the readers to \cite{ABC-97-ARMA, AM-06, AMS-01-ARMA,9,FW-86-JFA,O-93-CMP,WZ-97-SIAM,BL-83-ARMA,C-72-ARMA,S-77-CMP} and the references therein, where
under various hypotheses on the potential $V(x)$, solutions which exhibit a spike shape around the critical points of $V(x)$ were obtained.

Hereafter, we say a solution $w_{\epsilon}=(u^1_{\epsilon},u^2_{\epsilon})$ of \eqref{eq1.1} is nonstandard if $\liminf\limits_{\epsilon\to0}\|u^i_{\epsilon}\|_{L^{\infty(\R^N)}}>0$($i=1,2$). To our best knowledge,  the only result about the existence of nonstandard solutions for \eqref{eq1.1} is
proved by Montefusco, Pellacci, Squassina
 in \cite{MPS}, where they showed
 that \eqref{eq1.1} has a family of nonstandard solutions concentrating around the common local minimum of $V_1$ and $V_2$ provided that $\beta> 0$ is suitably large, $\inf_{\mathbb{R}^N}V_i > c_i > 0$$(i = 1,2)$ and $p = 2$.
  Hence we intend to obtain nonstandard solutions for \eqref{eq1.1} when $\beta> 0$ is close to $0$.
   The potential we consider can decay faster than $|x|^{-2}$ or be compactly supported,
   in which case the existence of nonstandard solutions for  \eqref{eq1.1} is analogue to the conjecture
    proposed by Ambrosetti and Malchiodi in \cite{AM-061}
     about whether there exist solutions for \eqref{P} when $\lim_{|x|\to\infty}V(x)|x|^2=0$. In the single case $\beta = 0$, this conjecture was answered partially in \cite{BaNa,Byeon-Wang,Cao-Peng} and positively in \cite{MV}.

One main method used in this paper is the penalized skill. When $\beta = 0$, this method was introduced firstly in \cite{CP,9} if $\inf_{\mathbb{R}^N}V>0$ and developed in \cite{28,30,16} if $V$ vanishes. Basing on the penalized idea, we create a new penalized function to cut off the nonlinear term in \eqref{eq1.1}. The other main method is mathematical analysis, which is used to do some monotonicity and zero point analysis(see Section \ref{s2}).

We set the Hilbert space $\mathcal{H}$ as
$$
\mathcal{H} = \{w = (u_1,u_2): u_1,u_2\in H^1(\mathbb{R}^N)\},
$$
with inner product
\begin{equation*}
  \langle w^1, w^2\rangle_{\mathcal{H}}
= \int_{\mathbb{R}^N}(\nabla u_{11}\nabla u_{21} + \lambda_1u_{11}u_{21})
 + \int_{\mathbb{R}^N}(\nabla u_{12}\nabla u_{22} + \lambda_2u_{12}u_{22})
\end{equation*}
and its reduced norm
\begin{equation*}
  \|w\|^2_{\lambda_1,\lambda_2} = \|u_{11}\|^2_{\lambda_1} + \|u_{12}\|_{\lambda_2}^2
\end{equation*}
for all $w^i = (u_{i1},u_{i2})(i = 1,2) \in\mathcal{H}$, where $\lambda_i\in(0,+\infty),$  $\|\cdot\|_{\lambda_i} = \int_{\mathbb{R}^N}|\nabla \cdot|^2 + \lambda_i|\cdot|^2$($i=1,2$) stand for the equivalent norms in $H^1(\mathbb{R}^N)$.

According to different decay rates of $V$, we define for each $i=1,2$ the weighted Hilbert space $H^1_{V_i,\epsilon}$ as

$$
H^1_{V_i,\epsilon}=\left\{
  \begin{array}{ll}
    \big\{\epsilon\nabla u\in L^2(\mathbb{R}^N):\,\,u\in L^2(\mathbb{R}^N,V_i(x)dx)\big\},
    & \text{if}\ \liminf_{|x|\to\infty}V_i(x)|x|^2>0, \vspace{0.12cm} \\
    \big\{u\in D^{1,2}(\R^N):\,\,u\in L^2(\mathbb{R}^N,V_i(x)dx)\big\}, & \text{others}
  \end{array}
\right.
$$
endowed with the norm
$$
\|u_i\|^2_{V_i,\epsilon}(\mathbb{R}^N) = \int_{\mathbb{R}^N}(\epsilon^{2}|\nabla u_i|^2 + V_i(x)u_i^2)dx,\ \,\, \forall u_i\in H^1_{V_i,\epsilon}(\mathbb{R}^N),
$$
where $D^{1,2}(\R^N)$ is the completion of $C^{\infty}_c(\R^N)$ with respect to the norm $\|u\|^2=\int_{\R^N}|\nabla u|^2dx$.
Also, like $\mathcal{H}$, we define the weighted product Hilbert space $\mathcal{H}_{\epsilon}(\mathbb{R}^N)$ as
$$
\mathcal{H}_{\epsilon}
=\{w=(u_1,u_2):\,\, u_i\in H^1_{V_i,\epsilon}(\mathbb{R}^N),\ i=1,2\},
$$
with inner product
\begin{equation*}
  \langle w^1, w^2\rangle_{\mathcal{H}_{\epsilon}}
= \sum_{i = 1}^2\int_{\mathbb{R}^N}\big(\epsilon^2\nabla u_{1i}\nabla u_{2i} + V_{i}(x) u_{1i}u_{2i}\big)
\end{equation*}
and its reduced norm
\begin{equation*}
  \|w\|^2_{\epsilon} = \sum_{i = 1}^2\|u_i\|^2_{V_i,\epsilon}
\end{equation*}
for all $w^i = (u_{i1},u_{i2}),\ w= (u_1,u_2)\in \mathcal{H}_{\epsilon}$.

In the sequel, we set
$$
2^* := \frac{2N}{(N - 2)_+} = \left\{
          \begin{array}{ll}
            \frac{2N}{N - 2}, & \text{for}\ N \geq 3, \vspace{2mm}\\
             +\infty, & \text{for}\ N = 1, 2.
          \end{array}
        \right.
$$

We assume that for every $i = 1,2$, $V_i\in C(\mathbb{R}^N,[0,\infty))$ satisfies the following assumptions. There exist open bounded sets $\Lambda,U$

\begin{equation}\label{AAeq1.4}
0 < \textsf{m}_i = \inf_{\Lambda}V_i \leq \inf_{U\backslash\Lambda}V_i(x),\,\, \Lambda\subset\subset U,
\end{equation}
and
\begin{equation}\label{AAeq1.6}
  \inf_{\Lambda}(V_1(x) + V_2(x))< \inf_{U\backslash\Lambda}(V_1(x) + V_2(x)).
\end{equation}
Denoting
\begin{equation*}\label{AAeq1.5}
V_{\min}(x)= \min\{V_1(x),V_2(x)\},\ \forall x\in\mathbb{R}^N,\, \mathcal{M} = \cap_{i = 1,2}\{x\in\Lambda:V_i(x) = \textsf{m}_i\},
\end{equation*}
we assume that
\begin{equation}\label{eq1.12}
  \mathcal{M}\ne \emptyset.
\end{equation}

Without loss of generality, we assume that $0\in\Lambda$,  $\textsf{m}_1\leq \textsf{m}_2$ and denote
$$
\omega = \textsf{m}_1/\textsf{m}_2\in (0,1],
$$
which represents the ratio of two pulses in Physics.

{Hereafter, we say a solution $w_{\epsilon} = (u^1_{\epsilon},u^2_{\epsilon})$ of \eqref{eq1.1} is nonstandard if $\liminf_{\epsilon\to 0}\|u^i_{\epsilon}\|_{L^{\infty}(\mathbb{R}^N)}>0$ for all $i = 1,2$ and standard if $\lim_{\epsilon\to 0}\|u^i_{\epsilon}\|_{L^{\infty}(\mathbb{R}^N)}=0$ for some $i \in\{1,2\}$.}

When $\beta>0$ is close to $0$, in order to construct solutions for \eqref{eq1.1} we need the following theorem. Define for every $\alpha_1,\alpha_2>0,\beta\in\mathbb{R}$
{\begin{equation}\label{eq1.6}
  \left\{
    \begin{array}{ll}
      \mathcal{C}_{\alpha_1,\beta_+}:=J_{\alpha_1,\beta_+}(U_{\alpha_1,\beta_+}), &  \\
      U_{\alpha_1,\beta_+}\ \text{is the ground states of}\ -\Delta u + \alpha_1 u = (1 + \beta_+)|u|^{2p - 2}u, &\vspace{0.12cm}\\
      J_{\alpha_1,\beta_+}(w) := \frac{\|u\|^2_{\alpha_1}}{2} - \frac{1 + \beta_+}{2p}|u|^{2p}_{2p},\ \ \forall u\in H^1(\mathbb{R}^N),&
    \end{array}
  \right.
\end{equation}
and
\begin{equation}\label{eq1.7}
  \left\{
    \begin{array}{ll}
      \mathcal{C}^*_{\alpha_1,\alpha_2,\beta}:=\max_{t,s>0}J_{\alpha_1,\alpha_2,\beta}\big((tU_{\alpha_1,\beta_+},sU_{\alpha_2,\beta_+})\big), &  \\
      J_{\alpha_1,\alpha_2,\beta}(u) := \|w\|^2_{\alpha_1,\alpha_2}/2 - \frac{1}{2p}|u_1|^{2p}_{2p} - \frac{1}{2p}|u_2|^{2p}_{2p} - \frac{\beta}{p}|u_1u_2|^p_p,&\\
          \qquad\qquad\qquad \forall w = (u_1,u_2)\in \mathcal{H}(\mathbb{R}^N).&
    \end{array}
  \right.
\end{equation}}
We have:
\begin{theorem}\label{important}
For every $2p\in (2,2_*)$, there exist
a constant $\tilde{\beta}_{\textsf{m}_1,\textsf{m}_2,p}>0$ and a decreasing function $\vartheta(\cdot):(0,\tilde{\beta}_{\textsf{m}_1,\textsf{m}_2,p})\to (0,+\infty)$ such that if
\begin{equation}\label{neweq2}
 0<\beta <\tilde{\beta}_{\textsf{m}_1,\textsf{m}_2,p},
\end{equation}
then it holds
\begin{eqnarray}\label{eq1.4}
\begin{split}
\left\{
  \begin{array}{ll}
    \text{for every}\ k\in\mathbb{N},\ \text{there exists no}\
 \delta_1,\ldots,\delta_k\in [0,\vartheta(\beta))&\vspace{0.12cm}\\
\text{such that}\
 \sum_{i = 1}^k\mathcal{C}_{\textsf{m}_1 + \delta_i,0}\in [\mathcal{C}_{\textsf{m}_1,\beta} + \mathcal{C}_{\textsf{m}_2,\beta}, \mathcal{C}^*_{\textsf{m}_1,\textsf{m}_2,\beta}].&
  \end{array}
\right.
\end{split}
\end{eqnarray}
\end{theorem}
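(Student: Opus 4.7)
The plan is to exploit the scaling laws of the scalar ground energies, the fact that at $\beta=0$ the target interval collapses to the single point $\mathcal{C}_{\textsf{m}_1,0}+\mathcal{C}_{\textsf{m}_2,0}$, and the discreteness of the arithmetic progression $\{k\mathcal{C}_{\textsf{m}_1,0}\}_{k\in\mathbb{N}}$.

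First I would record the standard scaling $\mathcal{C}_{\alpha,0}=\alpha^{\gamma}\mathcal{C}_{1,0}$ obtained from $u(x)=\alpha^{1/(2p-2)}U(\sqrt{\alpha}\,x)$, with $\gamma:=\tfrac{p}{p-1}-\tfrac{N}{2}>0$ (since $2p<2^{*}$), and the analogous $\mathcal{C}_{\alpha,\beta_{+}}=(1+\beta_{+})^{-1/(p-1)}\mathcal{C}_{\alpha,0}$. Writing $\eta:=(\textsf{m}_{2}/\textsf{m}_{1})^{\gamma}\ge 1$, the lower endpoint becomes the strictly decreasing function $(1+\beta)^{-1/(p-1)}(1+\eta)\mathcal{C}_{\textsf{m}_{1},0}$. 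For the upper endpoint, the substitution $(t,s)\mapsto((1+\beta)^{1/(2p-2)}t,(1+\beta)^{1/(2p-2)}s)$ in \eqref{eq1.7} gives
\[
\mathcal{C}^{*}_{\textsf{m}_{1},\textsf{m}_{2},\beta}=\max_{t,s>0}\Bigl\{J_{\textsf{m}_{1},0}(tU_{\textsf{m}_{1},0})+J_{\textsf{m}_{2},0}(sU_{\textsf{m}_{2},0})-\tfrac{\beta t^{p}s^{p}}{p}\!\int_{\mathbb{R}^{N}}\!U_{\textsf{m}_{1},0}^{p}U_{\textsf{m}_{2},0}^{p}\Bigr\},
\]
which is continuous in $\beta$, equals $(1+\eta)\mathcal{C}_{\textsf{m}_{1},0}$ at $\beta=0$ (max attained uniquely at $(t,s)=(1,1)$ by decoupling), and whose one-sided derivative at $\beta=0^{+}$ is $-\tfrac{1}{p}\int U_{\textsf{m}_{1},0}^{p}U_{\textsf{m}_{2},0}^{p}<0$ by the envelope theorem. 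In particular $\mathcal{C}^{*}_{\textsf{m}_{1},\textsf{m}_{2},\beta}<\mathcal{C}^{*}_{\textsf{m}_{1},\textsf{m}_{2},0}$ for every sufficiently small $\beta>0$.

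Next, set $k_{*}:=\lceil 1+\eta\rceil$ if $1+\eta\notin\mathbb{N}$ and $k_{*}:=1+\eta$ if $1+\eta\in\mathbb{N}$, and define $\tilde{\beta}_{\textsf{m}_{1},\textsf{m}_{2},p}>0$ as the supremum of $\beta>0$ for which
\[
(k_{*}-1)\mathcal{C}_{\textsf{m}_{1},0}<\mathcal{C}_{\textsf{m}_{1},\beta}+\mathcal{C}_{\textsf{m}_{2},\beta}\quad\text{and}\quad k_{*}\mathcal{C}_{\textsf{m}_{1},0}>\mathcal{C}^{*}_{\textsf{m}_{1},\textsf{m}_{2},\beta}.
\]
When $1+\eta\notin\mathbb{N}$ both inequalities are already strict at $\beta=0$ and persist by continuity. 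In the boundary case $\eta=n\in\mathbb{N}$ (so $k_{*}=n+1$), the first inequality is equivalent to the explicit threshold $\beta<(1+1/n)^{p-1}-1$, while the second, $\mathcal{C}^{*}_{\textsf{m}_{1},\textsf{m}_{2},\beta}<k_{*}\mathcal{C}_{\textsf{m}_{1},0}=\mathcal{C}^{*}_{\textsf{m}_{1},\textsf{m}_{2},0}$, holds for all small $\beta>0$ by the strict decrease just established. Either way $\tilde{\beta}_{\textsf{m}_{1},\textsf{m}_{2},p}>0$.

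Finally, for $\beta\in(0,\tilde{\beta}_{\textsf{m}_{1},\textsf{m}_{2},p})$ I would set
\[
\vartheta(\beta):=\tfrac{1}{2}\Bigl[\bigl((\mathcal{C}_{\textsf{m}_{1},\beta}+\mathcal{C}_{\textsf{m}_{2},\beta})/((k_{*}-1)\mathcal{C}_{1,0})\bigr)^{1/\gamma}-\textsf{m}_{1}\Bigr].
\]
The previous step forces $\vartheta(\beta)>0$, and since $\beta\mapsto\mathcal{C}_{\textsf{m}_{1},\beta}+\mathcal{C}_{\textsf{m}_{2},\beta}$ is decreasing, so is $\vartheta$; by construction $(k_{*}-1)\mathcal{C}_{\textsf{m}_{1}+\vartheta(\beta),0}<\mathcal{C}_{\textsf{m}_{1},\beta}+\mathcal{C}_{\textsf{m}_{2},\beta}$. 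For arbitrary $k\in\mathbb{N}$ and $\delta_{1},\ldots,\delta_{k}\in[0,\vartheta(\beta)]$, the monotonicity of $\delta\mapsto\mathcal{C}_{\textsf{m}_{1}+\delta,0}$ then gives: if $k\le k_{*}-1$, $\sum_{i=1}^{k}\mathcal{C}_{\textsf{m}_{1}+\delta_{i},0}\le(k_{*}-1)\mathcal{C}_{\textsf{m}_{1}+\vartheta(\beta),0}<\mathcal{C}_{\textsf{m}_{1},\beta}+\mathcal{C}_{\textsf{m}_{2},\beta}$, so the sum lies strictly below the target interval; if $k\ge k_{*}$, $\sum_{i=1}^{k}\mathcal{C}_{\textsf{m}_{1}+\delta_{i},0}\ge k_{*}\mathcal{C}_{\textsf{m}_{1},0}>\mathcal{C}^{*}_{\textsf{m}_{1},\textsf{m}_{2},\beta}$, so it lies strictly above. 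In either case the sum misses $[\mathcal{C}_{\textsf{m}_{1},\beta}+\mathcal{C}_{\textsf{m}_{2},\beta},\mathcal{C}^{*}_{\textsf{m}_{1},\textsf{m}_{2},\beta}]$, which is \eqref{eq1.4}. The main obstacle is the boundary case $(\textsf{m}_{2}/\textsf{m}_{1})^{\gamma}\in\mathbb{N}$, where $k_{*}\mathcal{C}_{\textsf{m}_{1},0}$ coincides exactly with $\mathcal{C}^{*}_{\textsf{m}_{1},\textsf{m}_{2},0}$ at $\beta=0$; continuity alone is then insufficient and one has to invoke the strict one-sided decrease of $\mathcal{C}^{*}_{\textsf{m}_{1},\textsf{m}_{2},\beta}$ at $\beta=0^{+}$ (i.e.\ the strict positivity of the coupling integral $\int U_{\textsf{m}_{1},0}^{p}U_{\textsf{m}_{2},0}^{p}$) to open up the required gap, together with the quantitative threshold $(1+1/n)^{p-1}-1$ to keep the lower-endpoint inequality alive.
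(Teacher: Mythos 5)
Your proposal follows essentially the same route as the paper: record the scaling law $\mathcal{C}_{\alpha,\beta}=(1+\beta)^{-1/(p-1)}\alpha^{\gamma}\mathcal{C}_{1,0}$, split $\mathbb{N}$ at the threshold $k_{*}=\lceil 1+\eta\rceil$ where $\eta=(\textsf{m}_2/\textsf{m}_1)^{\gamma}$ (the paper uses the equivalent decomposition $1+\eta=\tilde{l}+\hat{l}$ with $\tilde{l}\in\mathbb{N}$ and $\hat{l}\in[0,1)$), push the lower endpoint below $(k_*-1)\mathcal{C}_{\textsf{m}_1+\vartheta(\beta),0}$ via a suitably small $\vartheta(\beta)$, and dominate the upper endpoint by $k_*\mathcal{C}_{\textsf{m}_1,0}$ using the strict drop of $\mathcal{C}^*_{\textsf{m}_1,\textsf{m}_2,\beta}$ below $\mathcal{C}_{\textsf{m}_1,0}+\mathcal{C}_{\textsf{m}_2,0}$ once $\beta>0$. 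Your version is a bit cleaner in two respects: (a) you treat all cases uniformly, whereas the paper carves out a separate Case 1 ($p\ge 2$, $\omega=1$) and a Case 2 ($\textsf{m}_1\neq\textsf{m}_2$) that between them do not explicitly cover $p<2$ with $\omega=1$; and (b) the factor $\tfrac12$ in your $\vartheta(\beta)$ forces the small-$k$ sum \emph{strictly} below the lower endpoint, whereas the paper chooses $\vartheta(\beta)$ to make $(k_*-1)\mathcal{C}_{\textsf{m}_1+\vartheta(\beta),0}$ equal the lower endpoint, so that $\delta_1=\cdots=\delta_{k_*-1}=\vartheta(\beta)$ would land exactly on the closed interval $[\mathcal{C}_{\textsf{m}_1,\beta}+\mathcal{C}_{\textsf{m}_2,\beta},\,\mathcal{C}^*_{\textsf{m}_1,\textsf{m}_2,\beta}]$ — a boundary oversight your choice avoids. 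Your envelope-theorem remark in the boundary case $1+\eta\in\mathbb{N}$ is the same information the paper encodes in the strictness of \eqref{AAAeq1.22}; the one-line alternative is that for any $(t,s)$ with $ts>0$ the coupling term $-\tfrac{\beta}{p}t^{p}s^{p}\int U_{\textsf{m}_1,0}^{p}U_{\textsf{m}_2,0}^{p}$ is strictly negative, so the max in $\mathcal{C}^*_{\textsf{m}_1,\textsf{m}_2,\beta}$ is strictly below the decoupled max for every $\beta>0$, without any differentiability argument.
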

The proof of Theorem \ref{important} will be given at the end of Section \ref{s2}.

\begin{theorem}\label{th1.2}
Let $N \in\mathbb{N}$, $1\le N\le 3$, $2p \in [4,2^*)$ and $V_i\in C(\mathbb{R}^N;[0,\infty))$(i=1,2). Assume that either
\begin{equation}\label{AAeq1.8}
       \liminf_{|x|\to\infty}V_{min}(x)|x|^{2\sigma} > 0,\ \ \sigma\in[0,1],
\end{equation}
or
\begin{equation}\label{AAeq1.9}
  N\geq 3\ \ \text{and}\ \ 2p - 2 > \frac{2}{N - 2}.
\end{equation}
Then, letting $\tilde{\beta}_{\textsf{m}_1,\textsf{m}_2,p}$ be the constant in Theorem \ref{important}, there exists a constant $\bar{\beta}\in(0, \tilde{\beta}_{\textsf{m}_1,\textsf{m}_2,p}]$ such that problem \eqref{eq1.1} has a family of nonstandard solutions $\{w_{\omega,\epsilon} = ({u}^1_{\omega,\epsilon},{u}^2_{\omega,\epsilon}):0<\epsilon<\epsilon_{0}\}\in\mathcal{H}_{\omega,\epsilon}$ if $0<\beta<\bar{\beta}$ and
\begin{equation}\label{reviseeq}
\sup_{x\in\Lambda}V_1(x) - \textbf{m}_1 <\vartheta(\beta),\ \sup_{x\in\Lambda}\mathcal{C}_{V_2(x),0} <\mathcal{C}_{\textbf{m}_1,\beta} + \mathcal{C}_{\textbf{m}_2,\beta},
\end{equation}
where $\vartheta(\beta)$ is the constant given by Theorem \ref{important}.
 Moreover, supposing that ${u}^i_{\omega,\epsilon}({x}^i_{\omega,\epsilon}) =\max\limits_{x\in\mathbb{R}^N} {u}^i_{\omega,\epsilon}(x)$($i = 1,2$) and $\big({u}^1_{\omega,\epsilon} + {u}^2_{\omega,\epsilon}\big)({x}_{\omega,\epsilon}) = \max_{x\in\mathbb{R}^N}\big({u}^1_{\omega,\epsilon} + {u}^2_{\omega,\epsilon}\big)(x)$, then there hold
\begin{align*}
  &(i)\,\,{\liminf\limits_{\epsilon\to 0}u^i_{\omega,\epsilon}(x^i_{\omega,\epsilon})},\ \liminf\limits_{\epsilon\to 0}(u^1_{\omega,\epsilon} + u^2_{\omega,\epsilon})(x_{\omega,\epsilon})>0;\\
  &(ii)\,\,{\liminf_{\epsilon\to 0}\|u^i_{\omega,\epsilon}\|_{L^{\infty}(B_{\epsilon \rho}(x^i_{\omega,\epsilon}))}} > 0,\ {\liminf_{\epsilon\to 0}\|u^1_{\omega,\epsilon} + u^2_{\omega,\epsilon}\|_{L^{\infty}(B_{\epsilon \rho}(x_{\omega,\epsilon}))}} > 0;\\
  &(iii)\,\,\limsup_{\epsilon\to 0}\frac{|x^1_{\omega,\epsilon} - x^2_{\omega,\epsilon}| + |x^1_{\omega,\epsilon} - x_{\omega,\epsilon}|}{\epsilon} < +\infty;\\
  &(iv)\,\,\lim_{\epsilon\to 0}dist\ ({x}_{\omega,\epsilon},\mathcal{M}) = 0;\\
  &(v)\,\,\big(u^1_{\omega,\epsilon} + u^2_{\omega,\epsilon})(x)\le\left\{
             \begin{array}{ll}
              h(\sigma)Ce^{-c_{\epsilon,\sigma}\frac{|x - x_{\omega,\epsilon}|^{1 - \sigma}}{\epsilon^{1 - \sigma}}} + \frac{(1 - h(\sigma))C\epsilon^{C_{N,\epsilon}}}{1 + |x|^{C_{N,\epsilon}}}, & \text{if \eqref{AAeq1.8}}\ \text{holds}\vspace{0.12cm}\\
 Ce^{-\frac{\textsf{m}_1|x - x_{\omega,\epsilon}|}{\epsilon(|1 + |x - x_{\omega,\epsilon}|)}}\frac{1}{1 + |x|^{N - 2}},& \text{if \eqref{AAeq1.9}}\ \text{holds},
             \end{array}
           \right.
\end{align*}
where $h(\sigma) = 1$ if $0\le\sigma <1$ and $h(\sigma)=0$ if $\sigma = 1$, $\lim_{\epsilon\to 0}c_{\epsilon,\sigma}, \lim_{\epsilon\to 0}C_{N,\epsilon} = +\infty$, and $C$ is a positive constant.
\end{theorem}

\begin{remark}
When $\omega=1$, the constant $\bar{\beta} =2^{p-1}-1$ and when $\omega<1$, it is given by \eqref{neweq12} in Section \ref{s4}. \eqref{reviseeq} and the conclusion \eqref{eq1.4} in Theorem \ref{important} are
 used to construct nonstandard solutions. Indeed, it is used to get a contradiction if the solution in Lemma \ref{le3.8} is standard, see Lemmas \ref{le3.9} and \ref{Cle3.8} for details. \eqref{eq1.12} is used to prove property $(iii)$, which is proved by Local Pohozaev identities when $\omega<1$ and by comparing energy when $\omega=1$. This property is necessary in constructing a penalized function,  see \eqref{space} in Section \ref{s5} below.
%
%

By the monotonicity of $\mathcal{C}_{\cdot,\cdot}$(see \eqref{AAAeq1.21}),
 we need $\sup_{x\in\Lambda}V_1(x) - \textbf{m}_1$ and $\sup_{x\in\Lambda}V_2(x)$
  being suitably small. This requirement is natural because we can rearrangement $\Lambda$ as a small neighbourhood of $\mathcal{M}$. Moreover, if we let $$\sup_{x\in\Lambda}\mathcal{C}_{V_2(x),0} \le \mathcal{C}_{\textbf{m}_1,\tilde{\beta}_{\textbf{m}_1,\textbf{m}_2,p}/2} + \mathcal{C}_{\textbf{m}_2,\tilde{\beta}_{\textbf{m}_1,\textbf{m}_2,p}/2}
$$
and
\begin{equation}\label{addeq1.1}
\sup_{\Lambda}V_1(x) - \textsf{m}_1\le \vartheta\big(\frac{\tilde{\beta}_{\textsf{m}_1,\textsf{m}_2,p}}{2}\big),
\end{equation}
then by the decreasing of $\vartheta(\beta)$, the size of $\Lambda$ can be fixed. Indeed, if $\Lambda$  satisfies \eqref{addeq1.1}, then
the conclusion in Theorem \ref{important} holds for all $\beta\in\big(0,\frac{\tilde{\beta}_{\textsf{m}_1,\textsf{m}_2,p}}{2}\big)$.

\end{remark}


The main difficulties in the proof of Theorem \ref{th1.2} lie in the following
two aspects. Firstly, we can not construct solutions by constructing the ground solution as usual,
since every nonstandard solution of the limit system of \eqref{eq1.1} must own higher energy when $\beta>0$ small and $p\ge 2$(see Section \ref{s2} for more details). To this end, we construct
a nonstandard solution $w_{\epsilon}=(u^1_{\epsilon},u^2_{\epsilon})$ via minimizing-maxmizing on a two dimensional mountain pass geometry(see Definition \ref{de3.5} below). The second difficulty is the proof of the concentration phenomena of $w_{\epsilon}$. Actually, there
is not any monotonicity formula for higher energy of a coupled system(even for a single equation), which makes us
 not get the concentration phenomena via comparing energy as before. To this end, we firstly establish an accurate lower bound for the energy of all nonstandard solutions of
the limit system corresponding to \eqref{eq1.1} with $\beta<1$ and $p\ge 2$, see Theorem \ref{th2.2} below(for its application, see \eqref{eq4.33}, \eqref{eq4.41} and \eqref{eq4.42} in Section \ref{s4} for more details). Then, by some special construction of mountain pass geometry in Definition \ref{de3.5}, we get lower and upper bounds about the mountain pass value ${\mathcal{C}}_{\epsilon}$, see \eqref{eq3.9} below. By the estimates of ${\mathcal{C}}_{\epsilon}$, the lower bounds in Theorem \ref{th2.2} and Theorem \ref{important}, we then get the concentration phenomena in Theorem \ref{th1.2}. See Sections \ref{s3} and \ref{s4} for more details.

The example in Section 3 says that the usual functional corresponding to system \eqref{eq1.1} is not well defined
if $V$ vanishes. Hence we have to cut off the nonlinear term in \eqref{eq1.1} by a function {named the penalized function as usual}. The penalized function $P_{\epsilon}$ is constructed to be dominated by the potential $V_{min}$ if $\liminf_{|x|\to+\infty}V_{min}(x)|x|^{2\sigma} > 0$ with $\sigma\in[0,1]$. But when $V_{min}$ vanishes faster than $|x|^{-2}$ or even has compact support, the penalized function may not be dominated by $V_{min}$ anymore. To this end, we construct the penalized function to be dominated by the Hardy potential $\frac{1}{|x|^2}$. Indeed, the construction bases on the positivity of Hardy's operator $-\Delta - \kappa |x|^{-2}$ with $\kappa<(N - 2)^2/4$. The penalized function here is new, which simplifies a lot of computation in this paper. We will make this more precise in Sections \ref{s3} and \ref{s5}.

As an interesting part, when $\omega<1$, we prove the property $(iii)$ Theorem \ref{th1.2} by using local Pohozaev identities, which were used recently to get the location of concentration points, see \cite{GPS,PWY} and the references therein. To apply these identities, we need the solution to decrease faster outside a small ball of ${x}_{\omega,\epsilon}$, see Lemma \ref{LE4.6} below. These decay estimates follow by the further construction of super solutions in Section \ref{s6}, which depends on our special construction of the penalized function in Section \ref{s5}.

Finally, we want to emphasize the case that $V_{min}$ vanishes faster than $|x|^{-2}$ and even has compact support is a conjecture proposed by Ambrosetti and Malchiodi in \cite{AA}. We construct intuitive penalized functions with respect to different vanishing
 rates of $V_{min}$, see \eqref{eq3.1} and Section \ref{s5} below. Our construction is suitable for all subcritical $2p$ and dimension $N$ when $\liminf_{|x|\to+\infty}V_{min}|x|^{2\sigma}>0$ with $\sigma\in[0,1]$, which is
also of great interest.

\vspace{0.2cm}
We organize this paper as follows. In section \ref{s2}, we give some key results about the limit system. In section \ref{s3}, we establish the penalized scheme. In section \ref{s4}, we study the concentration phenomena
for the penalized solution. {We use local Pohozaev identities to show the concentration points are in $\mathcal{M}$ when $\beta>0$ is small}. In Section \ref{s5}, we prove the penalized solutions obtained in Section \ref{s3} solve the original problem \eqref{eq1.1} by constructing a precise penalized function $P_{\epsilon}$. In section \ref{s6}, we use the penalized function in Section \ref{s5} to construct suitable super-solutions to get the decay estimates in Theorem \ref{th1.2}, which are needed during the use of Pohozaev identities.

\section{The Limit problem}\label{s2}
{In} this section, we give some results about the limit problem corresponding to \eqref{eq1.1}.
We give the proof of Theorem \ref{important} in this section.

The limit system corresponding to \eqref{eq1.1} is given by
\begin{equation}\label{eq2.1}
  \left\{
    \begin{array}{ll}
      -\Delta u_1 + \alpha_1 u_1 = |u_1|^{2p - 2}u_{1} + \beta|u_1|^{p - 2}u_1|u_2|^p, & \text{in}\ \mathbb{R}^N, \vspace{2mm}\\
      -\Delta u_2 + \alpha_2 u_2 = |u_2|^{2p - 2}u_{2} + \beta|u_2|^{p - 2}u_2|u_1|^p, & \text{in}\ \mathbb{R}^N,
    \end{array}
  \right.
\end{equation}
where $\alpha_1,\alpha_2>0,\beta\in\mathbb{R}$.
Its Euler-Lagrange fucntional $J_{\alpha_1,\alpha_2,\beta}$ was given in \eqref{eq1.7}.
One can use the same argument as that of  \cite{MW} to show that the ground energy
\begin{eqnarray}\label{eq2.2}
\begin{split}
 \mathcal{C}_{\alpha_1,\alpha_2,\beta}
&= \inf_{\gamma\in\Gamma_{\alpha_1,\alpha_2,\beta}}\max_{t\in[0,1]}
J_{\alpha_1,\alpha_2,\beta}(\gamma(t))=\inf_{w\geq 0,w\in C^{\infty}_c(\mathbb{R}^N)\backslash\{0\}}\max_{t>0}
J_{\alpha_1,\alpha_2,\beta}(tw)
\end{split}
\end{eqnarray}
can be achieved by a positive radial function $W_{\alpha_1,\alpha_2,\beta}$,
where
$$
\Gamma_{\alpha_1,\alpha_2,\beta}: = \big\{\gamma\in C([0,1],\mathcal{H}):\gamma(0) = 0\,\,\text{and}\,\,J_{\alpha_1,\alpha_2,\beta}(\gamma(1))<0\big\}.
$$

When $p\geq 2$, the ground state $W_{\alpha_1,\alpha_2,\beta}$ is $(U_{\alpha_1,0},0)$ if $\alpha_1\le \alpha_2$ and $(0,U_{\alpha_2,0})$ if $\alpha_2\le \alpha_1$ if $0<\beta\le2^{p - 1} - 1$(see \cite{MMP}), which implies $\beta> 2^{p - 1} - 1$ is necessary for $W_{\alpha_1,\alpha_2,\beta}$ being nonstandard. Thus we conclude that any possible nonstandard solution of \eqref{eq2.1} must own higher energy than $\mathcal{C}_{\alpha_1,\alpha_2,\beta}$ if $p\geq 2$ and $\beta\le 2^{p - 1} - 1$. However, there exists no comparison principle for such energy, which makes it quite difficult to get the synchronized phenomena existing \eqref{eq1.1} for all attractive case$(\beta>0)$. To overcome this difficulty, we derive an accurate lower bound for energy of nonstandard solutions of \eqref{eq2.1}, from which, the concentration phenomena when $\beta>0$ is small can be obtained, see \eqref{eq4.33}, \eqref{eq4.41} and \eqref{eq4.42} in Section \ref{s4} for example.

\begin{theorem}\label{th2.2}
Let $w = (u_1,u_2)$ be a nonstandard solution of \eqref{eq2.1}. Then

$(i)$ If $p\ge2$ and $0<\beta<1$, then
\begin{equation*}\label{eq2.6}
  J_{\alpha_1,\alpha_2,\beta}(w) \geq \mathcal{C}_{\alpha_1,\beta} + \mathcal{C}_{\alpha_2,\beta}.
\end{equation*}

$(ii)$ If $\beta\leq 0$, then
\begin{equation*}\label{eq2.6}
  J_{\alpha_1,\alpha_2,\beta}(w) \geq \mathcal{C}_{\alpha_1,0} + \mathcal{C}_{\alpha_2,0},
\end{equation*}
where $\mathcal{C}_{\alpha_i,\beta}$ is given in \eqref{eq1.6}.
\end{theorem}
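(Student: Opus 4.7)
The plan is to compare $J_{\alpha_1,\alpha_2,\beta}(w)$ with a sum of single-equation ground energies by projecting each component onto an appropriate Nehari manifold. Testing the system against $u_i$ yields the Nehari identities $\|u_i\|^2_{\alpha_i} = A_i + \beta c$, where $A_i := |u_i|^{2p}_{2p}$ and $c := |u_1 u_2|^p_p$. Summing these and simplifying one obtains the clean formula
\[
J_{\alpha_1,\alpha_2,\beta}(w) \;=\; \tfrac{p-1}{2p}\bigl(A_1 + A_2 + 2\beta c\bigr).
\]
Moreover, for any $s \geq 0$ the mountain-pass characterization associated with \eqref{eq1.6} supplies
\[
\mathcal{C}_{\alpha_i,s} \;\leq\; \max_{t>0} J_{\alpha_i,s}(t u_i) \;=\; \tfrac{p-1}{2p}\, t_i^2\, \|u_i\|^2_{\alpha_i},
\qquad t_i^{2p-2} = \tfrac{\|u_i\|^2_{\alpha_i}}{(1+s) A_i}.
\]

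Part (ii) is then immediate with $s = 0$: since $t_i^{2p-2} = 1 + \beta c / A_i \leq 1$ when $\beta \leq 0$, one has $t_i \leq 1$, whence $\mathcal{C}_{\alpha_i, 0} \leq \tfrac{p-1}{2p}(A_i + \beta c)$; summing over $i$ gives $J(w) \geq \mathcal{C}_{\alpha_1, 0} + \mathcal{C}_{\alpha_2, 0}$. Part (i) is subtler: with $s = \beta > 0$, the Nehari projection $t_i$ can exceed $1$ precisely when $c > A_i$, which the Cauchy--Schwarz bound $c^2 \leq A_1 A_2$ does not preclude. Setting $z_i = c/A_i$, $\phi_i = (1+\beta z_i)/(1+\beta)$, and $r = 1/(p-1) \in (0, 1]$ (using $p \geq 2$), one checks $t_i^2 = \phi_i^r$, so the inequality $\sum_i t_i^2\|u_i\|^2_{\alpha_i} \leq \sum_i \|u_i\|^2_{\alpha_i}$ that would close the argument is exactly
\[
\Phi(r) \;:=\; \sum_{i=1}^{2} (A_i + \beta c)\,\phi_i^r \;\leq\; \Phi(0).
\]

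The key observation is that $\Phi$ is convex in $r$ (a positive combination of exponentials $e^{r\ln \phi_i}$), so by the chord bound on $[0,1]$ it is enough to establish the endpoint inequality $\Phi(1) \leq \Phi(0)$. Expanding and clearing denominators, this amounts to the algebraic inequality
\[
(A_1 + A_2)(A_1 A_2 - \beta c^2) \;\geq\; 2(1-\beta)\, c\, A_1 A_2.
\]
Applying AM--GM $A_1 + A_2 \geq 2\sqrt{A_1 A_2}$ reduces this to $d^2 - (1-\beta) c\, d - \beta c^2 \geq 0$ in $d := \sqrt{A_1 A_2}$; this quadratic factors as $(d - c)(d + \beta c)$, so the condition is equivalent to $d \geq c$, which is exactly Cauchy--Schwarz. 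The main obstacle I expect is precisely the unbalanced regime $c > \min(A_1, A_2)$: there the naive bound $t_i \leq 1$ fails, and the convexity-in-$r$ reduction is what permits the compensation from the other component (for which $t_{3-i} < 1$). Since equality in the algebraic step is attained at $A_1 = A_2 = c$, and direct checking shows $\Phi(1) > \Phi(0)$ can occur once $\beta \geq 1$, the hypothesis $0 < \beta < 1$ is essential and not merely technical.
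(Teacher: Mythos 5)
Your proof is correct, and it takes a genuinely different route from the paper's. The paper considers the two-variable fibering map $\mathcal{G}(t,s) = J_{\alpha_1,\alpha_2,\beta}(tu_1,su_2)$ and invests most of its effort in showing that $(1,1)$ is the unique interior maximum; this requires solving the Nehari system \eqref{eq2.10}, splitting into the cases $p=2$ and $p>2$, and in the latter case a monotonicity analysis of an auxiliary function $g(M)$ after the substitution $X = s_\beta/t_\beta$. Having located the maximum at $(1,1)$, the paper then uses the Hölder/Young inequality $|u_1u_2|^p_p \le \tfrac{1}{2}(|u_1|^{2p}_{2p}+|u_2|^{2p}_{2p})$ to bound $\mathcal{G}$ below by the decoupled sum $J_{\alpha_1,\beta}(tu_1)+J_{\alpha_2,\beta}(su_2)$ and takes a final max. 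You bypass the two-variable critical-point analysis entirely: from the Nehari identities you write $J(w) = \tfrac{p-1}{2p}(A_1+A_2+2\beta c)$ directly, bound each $\mathcal{C}_{\alpha_i,\beta}$ by the single-equation Nehari projection $\tfrac{p-1}{2p}t_i^2\|u_i\|^2$, and reduce the claim to $\sum_i t_i^2\|u_i\|^2 \le \sum_i\|u_i\|^2$, which you handle in one stroke by exploiting convexity in the exponent $r=1/(p-1)\in(0,1]$ and checking the endpoint inequality $\Phi(1)\le\Phi(0)$ algebraically (where the AM--GM step requires $A_1A_2-\beta c^2\ge 0$, valid precisely because $\beta<1$ and $c^2\le A_1A_2$). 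Your approach is more uniform across $p\ge 2$ (no case split), trades the paper's critical-point classification for a clean convexity reduction, and isolates exactly where $\beta<1$ enters — both through the sign of $A_1A_2-\beta c^2$ and, as you note, through the failure of $\Phi(1)\le\Phi(0)$ in the unbalanced regime for $\beta$ large. Your minor side remark about $\beta\ge 1$ is slightly imprecise (the endpoint inequality still holds at $\beta=1$ exactly; the failure is for larger $\beta$ with $A_1\ne A_2$), but this does not affect the argument for the stated hypothesis $0<\beta<1$.
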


\begin{proof}
We first consider $\beta\geq 0$.
Define the function $\mathcal{G}:(0,+\infty)\times(0,+\infty)\to \mathbb{R}$ as
$$
\mathcal{G}(t,s) := J_{\alpha_1,\alpha_2,\beta}((tu_1,su_2)).
$$
Note that $\mathcal{G}$ is continuous and $\mathcal{G}(t,s)\to-\infty$ as $(t + s)\to +\infty$. Hence $\mathcal{G}$ has at least one maximum point $\tau_{\beta} = (t_{\beta},s_{\beta})\in[0,+\infty)\times[0,+\infty)$. We claim that $\tau_{\beta}$ must be an inner point of $[0,+\infty)\times[0,+\infty)$. Firstly, note that $\tau_{\beta}\neq(0,0)$. If $t_{\beta} = 0$, then let
$g(rs_{\beta}) = \mathcal{G}(rs_{\beta},s_{\beta})$. It is easy to check that there exists $r\in(0,+\infty)$ such that $g(rs_{\beta}) > g(0)$, which is a contradiction.
Then $\nabla_{t,s}\mathcal{G}|_{\tau_{\beta}} = 0$. Denote $X_i = \|u_i\|^2_{\alpha_i}$, $Y_i = |u_i|^{2p}_{2p}$ and $Z =  |u_1u_2|^p_p$. It follows that
\begin{equation}\label{eq2.10}
\left\{
  \begin{array}{ll}
    t^2_{\beta}X_1 = t^{2p}_{\beta}Y_1 + \beta t^{p}_{\beta}s^p_{\beta}Z,&\vspace{0.12cm}\\
    s^2_{\beta}X_2 = s^{2p}_{\beta}Y_2 + \beta t^{p}_{\beta}s^p_{\beta}Z.&
  \end{array}
\right.
\end{equation}
We discuss $\tau_{\beta}$ in the following cases.

\vspace{0.2cm}
\noindent \textsf{Case 1: $p = 2$.}

\eqref{eq2.10} is equivalent to
\begin{equation*}\label{Beq2.4}
\left\{
  \begin{array}{ll}
    X_1 = t^2_{\beta}Y_1 + s^2_{\beta}\beta Z,&\vspace{0.12cm}\\
    X_2 = s^2_{\beta}Y_2 + t^2_{\beta}\beta Z,&
  \end{array}
\right.
\end{equation*}
which will have a unique solution $(1,1)$ if
\begin{equation*}
Y_1Y_2 - \beta^2Z^2 \neq 0.
\end{equation*}

\vspace{0.2cm}
\noindent
\textsf{Case 2: $p > 2$.}

It is easy to check that \eqref{eq2.10} is equivalent to
\begin{equation}\label{eq2.13}
\left\{
  \begin{array}{ll}
    (t^2_{\beta} - t^{2p}_{\beta})Y_1 = \beta Z(t^{p}_{\beta}s^p_{\beta} - t^{2}_{\beta}),&\vspace{0.12cm}\\
    (s^2_{\beta} - s^{2p}_{\beta})Y_2 = \beta Z(t^{p}_{\beta}s^p_{\beta} - s^{2}_{\beta}).&
  \end{array}
\right.
\end{equation}
Moreover,
denoting $X = \frac{s_{\beta}}{t_{\beta}}$, we have
\begin{equation}\label{Beq2.8}
 X^{2 - p}\frac{X_2}{X_1} = \frac{Y_2X^p + \beta Z}{Y_1 + \beta ZX^p}.
\end{equation}
Technically, letting $X = \Big(\frac{Y_1}{\beta Z}M\Big)^{\frac{1}{p}}$, \eqref{Beq2.8} is equivalent to
\begin{equation*}\label{}
 \Big(\frac{Y_1}{\beta Z}\Big)^{\frac{2 - p}{p}}\frac{X_2}{X_1}M^{\frac{2 - p}{p}}  = \frac{Y_2}{\beta Z} + \frac{\beta^2 Z^2 - Y_1Y_2}{\beta Z Y_1}\frac{1}{1 + M}.
\end{equation*}
Define
\begin{equation*}\label{Beq2.16}
g(M) = \Big(\frac{Y_1}{\beta Z}\Big)^{\frac{2 - p}{p}}\frac{X_2}{X_1}M^{\frac{2 - p}{p}}  - \frac{Y_2}{\beta Z} + \frac{Y_1Y_2-\beta^2 Z^2}{\beta ZY_1}\frac{1}{1 + M}.
\end{equation*}
Obviously, since $0\leq\beta<1$, by H\"older inequality, we have
\begin{equation}\label{eq2.16}
Y_1Y_2 - \beta^2Z^2 > 0.
\end{equation}
Then combining with the case that $p>2$, we conclude that $g$ decreases strictly, which implies $g$ has a unique zero point $\frac{\beta Z}{Y_1}$. Hence the $\tau_{\beta}$ in \eqref{eq2.10} must satisfy $t_{\beta} = s_{\beta}$. But, letting $t_{\beta} = s_{\beta}$ in \eqref{eq2.13}, we find
\begin{equation*}\label{}
 (t^2_{\beta} - t^{2p}_{\beta})^2Y_1Y_2 = \beta^2Z^2(t^2_{\beta} - t^{2p}_{\beta})^2,
\end{equation*}
which and \eqref{eq2.16} imply that $t_{\beta} = s_{\beta} = 1$.
Then we conclude that $\mathcal{G}$ takes maximum only at $(1,1)$.

\vspace{0.2cm}
\noindent\textsf{Proof of $(i)$.} By cases 1 and 2 above, if $p \geq 2$, we can conclude by H\"older inequality that
\begin{eqnarray*}
\begin{split}
\quad J_{\alpha_1,\alpha_2,\beta}(w)
 =  \mathcal{G}(1,1) = \max_{t,s>0}\mathcal{G}(t,s)
\geq \max_{t,s>0}\big(J_{\alpha_1,\beta}(tu_1) + J_{\alpha_2,\beta}(su_2)\big)\geq \mathcal{C}_{\alpha_1,\beta} + \mathcal{C}_{\alpha_2,\beta}.
\end{split}
\end{eqnarray*}
This proves $(i)$.

\vspace{0.2cm}

\noindent\textbf{Case 3: $\beta\leq 0$.}

We have
\begin{eqnarray*}\label{eq2.18}
\begin{split}
{\mathcal{G}}(t,s)
&= \Big(\frac{t^2}{2} - \frac{t^{2p}}{2p}\Big)X_1 + \Big(\frac{s^2}{2} - \frac{s^{2p}}{2p}\Big)X_2+ \Big(\frac{t^{2p}}{2p} + \frac{s^{2p}}{2p} - \frac{t^ps^p}{p}\Big)\beta Z\\
&\leq \Big(\frac{t^2}{2} - \frac{t^{2p}}{2p}\Big)X_1 + \Big(\frac{s^2}{2} - \frac{s^{2p}}{2p}\Big)X_2\\
&:= \widehat{\mathcal{G}}(t,s).
\end{split}
\end{eqnarray*}
It follows that
\begin{eqnarray*}\label{eq2.18}
\begin{split}
\mathcal{G}(1,1)
&\leq\max_{t,s>0}
{\mathcal{G}}(t,s)\leq \max_{t,s>0}\widehat{\mathcal{G}}(t,s) = \mathcal{G}(1,1).
\end{split}
\end{eqnarray*}
Hence $\tau_{\beta} = (1,1)$ and
\begin{equation*}
J_{\alpha_1,\alpha_2,\beta}(w)=\mathcal{G}(1,1)
\ge \max_{t,s>0}\Big(J_{\alpha_1,0}(tu_1) + J_{\alpha_2,0}(su_2)\Big)\ge \mathcal{C}_{\alpha_1,0} + \mathcal{C}_{\alpha_2,0},
\end{equation*}
which proves $(ii)$. Then the proof is completed.
\end{proof}

\vspace{0.2cm}


%

Now we  prove Theorem \ref{important}.

\begin{proof}[Proof of Theorem \ref{important}]

Let $\beta\ge 0$. It is easy to check by Changing-Of-Variable Theorem and H\"older inequality that
\begin{equation}\label{AAAeq1.21}
 \mathcal{C}_{\alpha,\beta} = \frac{\alpha^{\frac{p}{p - 1} - \frac{N}{2}}}{(1 + \beta)^{\frac{1}{p - 1}}}\mathcal{C}_{1,0},\ \ \forall\alpha>0,\ \beta\in\mathbb{R}
\end{equation}
and
\begin{equation}\label{AAAeq1.22}
\mathcal{C}_{\textsf{m}_1,\beta} + \mathcal{C}_{\textsf{m}_2,\beta}\le \mathcal{C}^*_{\textsf{m}_1,\textsf{m}_2,\beta}<\mathcal{C}_{\textsf{m}_1,0} + \mathcal{C}_{\textsf{m}_2,0}.
\end{equation}

Denote
\begin{eqnarray*}
\begin{split}
 \mathcal{F}(t,s) &= J_{\alpha_1,\alpha_2,\beta}(tU_{\alpha_1,\beta},sU_{\alpha_2,\beta}),\ \ (t,s)\in[0,+\infty)\times[0,+\infty).
 \end{split}
\end{eqnarray*}
Obviously, we can fix $T>0$ such that
\begin{equation*}
\mathcal{F}\ \text{does not take a maximum in}\ [0,+\infty)\times[0,+\infty)\backslash (0,T)\times(0,T).
\end{equation*}
Hence a maximum point $\tau_{\beta} = (t_{\beta},s_{\beta})$ of $\mathcal{F}$ must satisfy
\begin{equation*}\label{eq1.15}
\nabla_{t,s}\mathcal{F}(\tau_{\beta}) = 0.
\end{equation*}


\noindent\textbf{Case 1.} $p\geq 2$ and $\omega = 1$.

It is easy to check by the same argument in the proof of Theorem \ref{th2.2} that $\mathcal{F}$ has a unique maximum point $(1,1)$ when $0< \beta < 1$. Then
\begin{equation}\label{neweq3}
\mathcal{C}^*_{\textsf{m},\textsf{m},\beta} = \frac{2}{(1 + \beta)^{\frac{1}{p - 1}}}\mathcal{C}(\textsf{m},0).
\end{equation}
Let
$$
\tilde{\beta}_{\textsf{m},\textsf{m},p} = 1\ \ \text{and}\ \vartheta(\beta) = \bigg(\Big(\frac{2}{(1 + \beta)^{\frac{1}{p - 1}}}\Big)^{\frac{1}{\frac{p}{p - 1} - \frac{N}{2}}} - 1\bigg)\textbf{m}.
$$
It is easy to see that
if ~$0<\beta<\tilde{\beta}_{\textbf{m}_1,\textbf{m}_2,p}$ and $\delta_1,\delta_2\in [0,\vartheta(\beta)]$, then it holds
\begin{align*}
  \mathcal{C}_{\textbf{m}_1 + \delta_1,0} + \mathcal{C}_{\textbf{m}_1 + \delta_2, 0}\ge 2\mathcal{C}_{\textbf{m},0}> \frac{2}{(1 + \beta)^{\frac{1}{p - 1}}}\mathcal{C}(\textbf{m},0) = \mathcal{C}^*_{\textsf{m},\textsf{m},\beta},
\end{align*}
from which we get \eqref{eq1.4}.

\vspace{0.2cm}
\noindent\textbf{Case 2.} $\textsf{m}_1\neq\textsf{m}_2$.
\vspace{0.2cm}
Let
\begin{equation*}\label{eq1.28}
\frac{\mathcal{C}_{\textsf{m}_1,0}+\mathcal{C}_{\textsf{m}_2,0}}{\mathcal{C}_{\textsf{m}_1,0}} = \tilde{l} + \hat{l},
\end{equation*}
where $\tilde{l}\in\N\backslash\{0\}$ and $\hat{l}\in[0,1)$. By \eqref{AAAeq1.21}, we can define $\tilde{\beta}_{\textsf{m}_1,\textsf{m}_2,p}>0$ as the unique constant such that
\begin{equation*}\label{neweq4}
\frac{\mathcal{C}_{\textsf{m}_1,\tilde{\beta}_{\textsf{m}_1,\textsf{m}_2,p}} + \mathcal{C}_{\textsf{m}_2,\tilde{\beta}_{\textsf{m}_1,\textsf{m}_2,p}}}{\mathcal{C}_{\textsf{m}_1,0}}
=
\left\{
  \begin{array}{ll}
      \tilde{l} - 1, & \text{if}\ \hat{l} = 0, \vspace{0.12cm}\\
    \tilde{l}, &\textsf{if}\ \hat{l}\in(0,1).
  \end{array}
\right.
\end{equation*}
Then, for every $0<\beta<\tilde{\beta}_{\textsf{m}_1,\textsf{m}_2,p}$, defining $\vartheta(\beta)$ as the unique constant such that
\begin{eqnarray*}\label{eq1.30}
\begin{split}
\frac{\mathcal{C}_{\textsf{m}_1,\beta} + \mathcal{C}_{\textsf{m}_2,\beta}}{\mathcal{C}_{\textsf{m}_1 + \vartheta(\beta),0}} = \left\{
  \begin{array}{ll}
      \tilde{l} - 1, & \text{if}\ \hat{l} = 0, \vspace{0.12cm}\\
    \tilde{l}, &\textsf{if}\ \hat{l}\in(0,1),
  \end{array}
\right.
\end{split}
\end{eqnarray*}
we have
\begin{equation*}\label{neweq5}
  \mathcal{C}^*_{\textbf{m}_1,\textbf{m}_2,\beta} > \mathcal{C}_{\textbf{m}_1,\beta} + \mathcal{C}_{\textbf{m}_2,\beta} \ge\left\{
  \begin{array}{ll}
      (\tilde{l} - 1)\mathcal{C}_{\textbf{m}_1 + \vartheta(\beta),0}, & \text{if}\ \hat{l} = 0, \vspace{0.12cm}\\
    \tilde{l}\mathcal{C}_{\textbf{m}_1 + \vartheta(\beta),0}, &\text{if}\ \hat{l}\in(0,1).
  \end{array}
\right.
\end{equation*}
Hence, when $\hat{l} = 0$,  for all $\delta_1,\ldots,\delta_{\tilde{l}}\in[0,\vartheta(\beta))$, by \eqref{AAAeq1.22}, we have
\begin{equation}\label{addeq2.1}
\left\{
  \begin{array}{ll}
    \sum_{i = 1}^{\tilde{l}-1}\mathcal{C}_{\textbf{m}_1+\delta_i,0}< (\tilde{l} - 1)\mathcal{C}_{\textbf{m}_1+\vartheta(\beta),0}=\mathcal{C}_{\textsf{m}_1,\beta} + \mathcal{C}_{\textsf{m}_2,\beta}, & \\
     \sum_{i = 1}^{\tilde{l}}\mathcal{C}_{\textbf{m}_1+\delta_i,0}\ge \tilde{l}\mathcal{C}_{\textbf{m}_1,0}>\mathcal{C}^*_{\textbf{m}_1,\textbf{m}_2,\beta}.
  \end{array}
\right.
\end{equation}
When $\hat{l}\in(0,1)$, by the same reason above, we conclude that for all $\delta_1,\ldots,\delta_{\tilde{l}+1}\in[0,\vartheta(\beta))$, it holds
\begin{equation}\label{addeq2.2}
\left\{
  \begin{array}{ll}
    \sum_{i = 1}^{\tilde{l}}\mathcal{C}_{\textbf{m}_1+\delta_i,0}< \tilde{l}\mathcal{C}_{\textbf{m}_1+\vartheta(\beta),0}=\mathcal{C}_{\textsf{m}_1,\beta} + \mathcal{C}_{\textsf{m}_2,\beta}, & \\
     \sum_{i = 1}^{\tilde{l}+1}\mathcal{C}_{\textbf{m}_1+\delta_i,0}\ge (\tilde{l}+1)\mathcal{C}_{\textbf{m}_1,0}>\mathcal{C}^*_{\textbf{m}_1,\textbf{m}_2,\beta}.
  \end{array}
\right.
\end{equation}
Obviously, $\vartheta(\cdot):(0,\tilde{\beta}_{\textsf{m}_1,\textsf{m}_2,p})\to (0,+\infty)$ is continuous and decreasing, from which, \eqref{addeq2.1} and \eqref{addeq2.2} we  completes the proof of Theorem \ref{important}.

\end{proof}
%

\section{The penalized scheme}\label{s3}

\subsection{The penalized functional}
Let $f(x)\in C^{\infty}(\mathbb{R}^N)$ satisfy

\begin{align*}
  f_{\alpha}(x): = \left\{
            \begin{array}{ll}
              1, & x\in B_1(0), \vspace{2mm}\\
              \in[0,1],& x\in B_2(0)\backslash B_1(0),\vspace{2mm}\\
              \frac{1}{|x|^{\alpha}}, & x\in \mathbb{R}^N\backslash B_2(0).
            \end{array}
          \right.
\end{align*}
It is easy to check that if $V_{\min}(x)=O(|x|^{-2\hat{\sigma}})$ with $\hat{\sigma} > 0$, then for $\alpha\in \big(\max\{\frac{N - 2\hat{\sigma}}{2},\frac{N}{2}\},\frac{N}{2p}\big)$, it holds
$$
\|f_{\alpha}\|_{\mathcal{H}_{\epsilon}}<+\infty\ \text{but}\ |f_{\alpha}|^{2p}_{2p} = +\infty.
$$
Hence we need to cut off the nonlinear terms ``$|u_i|^{2p - 2}u_i,\,\,|u_i|^{p - 2}u_i|u_j|^p$" $(i,j = 1,2).$


We choose a family of penalized potentials $P_{\epsilon}\in L^{\infty}(\mathbb{R}^N,[0,\infty))$ such that
\begin{eqnarray}\label{eq3.1}
\begin{split}
 P_{\epsilon}(x) = 0\,\,\text{in}\,\,\Lambda\,\,\ \text{and}
\left\{
 \begin{array}{ll}
    \limsup_{\epsilon\to 0}\sup_{x\in\mathbb{R}^N\backslash\Lambda}P_{\epsilon}(x)|x|^{(2 + \kappa)\sigma} = 0, & \text{if}\ \eqref{AAeq1.8}\ \text{holds}\\
    \limsup_{\epsilon\to 0}\epsilon^{-2}\sup_{x\in\mathbb{R}^N\backslash\Lambda}P_{\epsilon}(x)|x|^{2} = 0, & \text{if}\ \eqref{AAeq1.9}\ \text{holds},
  \end{array}
\right.
\end{split}
\end{eqnarray}
where $\kappa> 0$ is a small parameter.


By the prior decay assumption on $P_{\epsilon}$,
we immediately have the following proposition:

\begin{proposition}\label{pr3.1}(\cite[Theorem 4]{30}, \cite[Lemma 3.5]{MV}).
The embedding $H^1_{V_i,\epsilon}(\mathbb{R}^N)\subset\subset L^2(\mathbb{R}^N, (P_{\epsilon}(x) + \chi_{\Lambda}(x))dx)$ is compact. Moreover, for every $\tau>0$, there exists an $\epsilon_{\tau}>0$ such that if $\epsilon\in(0,\epsilon_{\tau})$,
$$
\int_{\mathbb{R}^N}P_{\epsilon}(x)|\varphi_i|^2 dx \leq \tau \int_{\mathbb{R}^N}\epsilon^{2}|\nabla  \varphi|^2 + V_i|\varphi_i|^2
$$
for each $\varphi_i\in H^1_{V_i,\epsilon}(\mathbb{R}^N)$ $(i = 1,2)$.
\end{proposition}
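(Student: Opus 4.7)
The plan is to establish the displayed norm inequality first and then deduce the compact embedding from it by a standard truncation plus Rellich--Kondrachov argument. The structural fact to exploit is that $0\in\Lambda$ and $\Lambda$ is open, so there exists $r_0>0$ with $B_{r_0}(0)\subset\Lambda$; hence $\mathrm{supp}\,P_{\epsilon}\subset\mathbb{R}^N\setminus B_{r_0}(0)$, and in particular $\|P_{\epsilon}\|_{L^\infty}\to 0$ as $\epsilon\to 0$ because on its support one has $|x|\ge r_0$.

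For the inequality I would treat the two decay regimes of \eqref{eq3.1} separately. Under \eqref{AAeq1.8}, fix $R\ge r_0$ so large that $U\subset B_R(0)$ and $V_{\min}(x)\ge c|x|^{-2\sigma}$ for $|x|\ge R$. For $|x|\ge R$, \eqref{eq3.1} yields
$$P_{\epsilon}(x)\le \frac{o_{\epsilon}(1)}{|x|^{(2+\kappa)\sigma}}\le \frac{o_{\epsilon}(1)}{c\,R^{\kappa\sigma}}V_{\min}(x)\le \tau V_i(x)$$
for $\epsilon$ small. On $U\setminus\Lambda$, assumption \eqref{AAeq1.4} gives $V_i\ge\textsf{m}_i>0$, and since $\|P_{\epsilon}\|_\infty\to 0$, the same bound $P_{\epsilon}\le\tau V_i$ holds there for $\epsilon$ small. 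Integrating, and recalling $P_{\epsilon}\equiv 0$ on $\Lambda$, gives the inequality in this regime. Under \eqref{AAeq1.9} (the remaining case in \eqref{eq3.1}) one has $P_{\epsilon}(x)\le o_{\epsilon}(\epsilon^2)/|x|^2$ on the support of $P_{\epsilon}$, and since $N\ge 3$ I would apply Hardy's inequality:
$$\int_{\mathbb{R}^N}P_{\epsilon}|\varphi_i|^2\,dx\le o_{\epsilon}(\epsilon^2)\int_{\mathbb{R}^N}\frac{|\varphi_i|^2}{|x|^2}\,dx\le \frac{4\,o_{\epsilon}(1)}{(N-2)^2}\int_{\mathbb{R}^N}\epsilon^{2}|\nabla\varphi_i|^2\,dx\le \tau\,\|\varphi_i\|_{V_i,\epsilon}^2$$
for $\epsilon$ small, as required.

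For the compactness, take $\varphi_n\rightharpoonup\varphi_\infty$ weakly in $H^1_{V_i,\epsilon}(\mathbb{R}^N)$. Fix $\tau>0$ and choose $R$ large with $\Lambda\subset B_R(0)$. I would split
$$\int_{\mathbb{R}^N}(P_{\epsilon}+\chi_\Lambda)|\varphi_n-\varphi_\infty|^2\,dx=\int_{B_R}+\int_{B_R^c}.$$
On $B_R$ the weight $P_{\epsilon}+\chi_\Lambda\le \|P_{\epsilon}\|_\infty+1$ is bounded, and the classical Rellich--Kondrachov embedding $H^1(B_R)\subset\subset L^2(B_R)$ drives the first integral to $0$ along a subsequence. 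On $B_R^c$ the cutoff $\chi_\Lambda$ vanishes, and the inequality just proved controls the remainder by $\tau\sup_n\|\varphi_n-\varphi_\infty\|_{V_i,\epsilon}^2\le C\tau$. Letting $n\to\infty$ and then $\tau\to 0$ closes the argument.

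\emph{Main obstacle.} The most delicate point will be the intermediate annulus $B_R\setminus U$ in regime \eqref{AAeq1.8}, where $V_i$ may still vanish: the pointwise bound $P_{\epsilon}\le\tau V_i$ is unavailable there and one must combine $\|P_{\epsilon}\|_\infty\to 0$ with a local Hardy or Sobolev inequality without losing the factor $\epsilon^{-2}$ that separates $\int\epsilon^{2}|\nabla\varphi|^2$ from $\int|\nabla\varphi|^2$. This is precisely why the explicit construction of $P_{\epsilon}$ in Section \ref{s5} has to be arranged compatibly with the geometry of the vanishing set of $V_i$; I expect the construction there to force $P_{\epsilon}$ to decay fast enough (relative to $\epsilon^2$) wherever $V_i$ degenerates so that the Hardy estimate above absorbs this loss uniformly.
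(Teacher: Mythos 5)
The paper proves this proposition purely by citation: it defers the slow-decay regime \eqref{AAeq1.8} to \cite[Theorem 4]{30} and the fast-decay regime to \cite[Lemma 3.5]{MV}, recording only that the latter rests on Hardy's inequality \eqref{CCeq3.3}. So there is no in-paper argument to match against, and the question is whether your reconstruction is self-contained. Your treatment of the regime \eqref{AAeq1.9} is correct as written: the second line of \eqref{eq3.1} gives $P_\epsilon(x)\le o_\epsilon(1)\,\epsilon^2/|x|^2$ off $\Lambda$, Hardy (valid since $N\ge 3$) yields the displayed bound, and the truncation--Rellich compactness scheme is routine once you add the small observation that $\|\varphi_n\|_{V_i,\epsilon}$ controls $\|\varphi_n\|_{H^1(B_R)}$ (the gradient trivially, and $\int_{B_R}|\varphi_n|^2$ by Hardy on $B_R\setminus B_{r_0}$ together with $V_i\ge\textsf{m}_i$ on $\Lambda$).

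The gap you flag yourself in regime \eqref{AAeq1.8} is genuine, and the resolution you anticipate does not automatically materialise. On the annulus $B_R\setminus U$, where \eqref{AAeq1.4} is silent and $V_i$ may vanish, the first line of \eqref{eq3.1} gives only $\sup_{\mathbb{R}^N\setminus\Lambda}P_\epsilon=o_\epsilon(1)$; since any argument through the gradient necessarily loses a factor $\epsilon^{-2}$ against $\int\epsilon^2|\nabla\varphi|^2$, one would need $\sup P_\epsilon=o(\epsilon^2)$ there. The explicit $P_\epsilon$ built in Section \ref{s5} for this regime is $\epsilon^\delta|x|^{-(2+\kappa)\sigma}\chi_{\mathbb{R}^N\setminus\Lambda}$ with $\delta\in(0,4p-4)$, and the Hardy estimate on the annulus closes only if $\delta>2$, i.e.\ $2p>3$; for $2p\le 3$ the mechanism you hope for is simply unavailable. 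What actually closes the gap in \cite{30} is a positivity hypothesis on the potential away from the localisation set (their $V$ vanishes only at infinity, hence is bounded below on compact subsets of $\mathbb{R}^N\setminus\Lambda$), under which $\inf_{B_R\setminus U}V_i>0$ and the pointwise comparison $P_\epsilon\le\tau V_i$ that you already use on $U\setminus\Lambda$ and for $|x|\ge R$ extends across the annulus for $\epsilon$ small. You should state that hypothesis explicitly rather than rely on an $\epsilon$-rate that the abstract condition \eqref{eq3.1} does not supply.
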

\begin{proof}
When \eqref{AAeq1.8} holds, the proof is given in \cite[Theorem 4]{30}. When \eqref{AAeq1.8} does not hold, i.e., $V_{min}$ vanishes faster than $|x|^{-2}$ or has compact support, the proof is given in \cite[Lemma 3.5]{MV},  which is based on the well-known Hardy inequality: When $N\ge 3$, it holds
\begin{equation}\label{CCeq3.3}
  \int_{\mathbb{R}^N}|\nabla u|^2\geq \frac{(N - 2)^2}{4}\int_{\mathbb{R}^N}\frac{|u|^2}{|x|^2}\ \ \text{for all}\ u\in D^{1,2}(\R^N).
\end{equation}
\end{proof}

\begin{remark}
The idea of construction a penalized function when \eqref{AAeq1.8} does not hold is also from \eqref{CCeq3.3}, which and a standard variational argument imply the operator
\begin{equation*}\label{}
  -\epsilon^2\Delta  - \epsilon^2 \theta|x|^{-2} + V_{min}
\end{equation*}
is positive if $\theta < \frac{(N - 2)^2}{4}$. See  case 2 in Section \ref{s5} for more details.
\end{remark}

Given a penalized potential $P_{\epsilon}$ that satisfies  \eqref{eq3.1},
we define the penalized nonlinearities $g_{ \epsilon}:\mathbb{R}^N\times \mathbb{R}\to \mathbb{R}$  by
$$
g_{\epsilon}(x,s): = \chi_{\Lambda}(x)s^{2p - 1}_{+} + \chi_{\mathbb{R}^N\backslash \Lambda}(x)\min(s^{2p - 1}_{+}, P_{\epsilon}(x)s_+).
$$
We also denote $G_{\epsilon}(x,t) = \int_{0}^{t}g_{ \epsilon}(x,s)ds$. Moreover, we define
$$
\tilde{g}_{\epsilon}(x,s): = \chi_{\Lambda}(x)s^{p - 1}_{+} + \chi_{\mathbb{R}^N\backslash \Lambda}(x)\min \Big(s^{p - 1}_{+}, \sqrt{P_{\epsilon}(x)}/\sqrt{p} \Big)
$$
and denote $\widetilde{G}_{\epsilon}(x,t) = \int_{0}^{t}\tilde{g}_{\epsilon}(x,s)ds$.
Accordingly, we define the penalized superposition operators $\mathfrak{g}_{\epsilon}$, $\mathfrak{G}_{\epsilon}$,  $\widetilde{\mathfrak{g}}_{\epsilon}$ and $\widetilde{\mathfrak{G}}_{\epsilon}$  by
\begin{align*}
&\mathfrak{g}_{\epsilon}(u)(x) = g_{\epsilon}(x,u(x)),\ \mathfrak{G}_{\epsilon}(u)(x) = G_{\epsilon}(x,u(x)),\\ &\widetilde{\mathfrak{g}}_{\epsilon}(u)(x) = \tilde{g}_{\epsilon}(x,u(x)),\ \widetilde{\mathfrak{G}}_{\epsilon}(u)(x) = \widetilde{G}_{\epsilon}(x,u(x))
\end{align*}
and the penalized functional $J_{\epsilon}:\mathcal{H}_{\epsilon}\to \mathbb{R}$ by
\begin{align*}
  J_{\epsilon}(w) &= \frac{1}{2}\|w\|^2_{\mathcal{H}_{\epsilon}}
                    - \int_{\mathbb{R}^N}\mathfrak{G}_{\epsilon}(u_1) + \mathfrak{G}_{\epsilon}(u_2) - p\beta\int_{\mathbb{R}^N} \widetilde{\mathfrak{G}}_{\epsilon}(u_2)\widetilde{\mathfrak{G}}_{\epsilon}(u_1)dx.
\end{align*}

Now with the help of \eqref{eq3.1} and Proposition \ref{pr3.1}, we are going to prove the following lemma, which says that $J_{\epsilon}\in C^1(\mathcal{H}_{\epsilon},\R)$ and satisfies (P.S.) condition. It is a basic requirement for looking
for solutions. The proof is not obvious because of the coupling effect.

\begin{lemma}\label{le3.2}

(i) Let $2 < p < 2^*$. Then $J_{\epsilon}\in C^1(\mathcal{H}_{\epsilon}, \mathbb{R})$ and $w = (u_1,u_2)\in \mathcal{H}_{\epsilon}$ is a critical point of $J_{\epsilon}$ if and only if $w$ is a weak solution of the penalized system:
\begin{equation}\label{Ceq3.3}
\left\{
  \begin{array}{ll}
   - \epsilon^{2}\Delta  u_1 + V_1(x)u_1 = \mathfrak{g}_{\epsilon}(u_1) + p\beta\tilde{\mathfrak{g}}_{\epsilon}(u_1)\widetilde{\mathfrak{G}}_{\epsilon}(u_2), & x\in \mathbb{R}^N,\vspace{0.12cm}\\
    -\epsilon^{2}\Delta  u_2 + V_2(x)u_2 = \mathfrak{g}_{\epsilon}(u_2) + p\beta\tilde{\mathfrak{g}}_{\epsilon}(u_2)\widetilde{\mathfrak{G}}_{\epsilon}(u_1), & x\in \mathbb{R}^N.
  \end{array}
\right.
\end{equation}

(ii) (P.S. condition) $J_{\epsilon}$ satisfies the Palais-Smale condition if $0<\epsilon<\epsilon_0$ for some $\epsilon_0 > 0$.

\end{lemma}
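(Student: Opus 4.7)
The plan is to establish (i) by direct differentiability calculus adapted to the penalized nonlinearity, and then to obtain (ii) via the Ambrosetti--Rabinowitz boundedness scheme combined with the compactness furnished by Proposition \ref{pr3.1}.

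For (i), I will verify that the four nonlinear contributions to $J_\epsilon$, namely the decoupled terms $u\mapsto\int_{\mathbb{R}^N}\mathfrak{G}_\epsilon(u)$ and the coupling term $(u_1,u_2)\mapsto \int_{\mathbb{R}^N}\widetilde{\mathfrak{G}}_\epsilon(u_1)\widetilde{\mathfrak{G}}_\epsilon(u_2)$, are of class $C^1$ on $H^1_{V_i,\epsilon}$ and $\mathcal{H}_\epsilon$ respectively, by splitting the integrals between $\Lambda$ and $\mathbb{R}^N\setminus\Lambda$. Inside $\Lambda$ the integrands reduce to the subcritical powers $s_+^{2p}/(2p)$ and $s_+^p/p$, whose differentiability follows from the Sobolev embedding on the bounded set $\Lambda$. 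Outside $\Lambda$ the truncation yields the pointwise bounds $|\mathfrak{g}_\epsilon(u)|\le P_\epsilon(x)u_+$ and $|\tilde{\mathfrak{g}}_\epsilon(u)|\le \sqrt{P_\epsilon(x)/p}$, hence $|\mathfrak{G}_\epsilon(u)|\le \tfrac12 P_\epsilon(x)u^2$ and $|\widetilde{\mathfrak{G}}_\epsilon(u)|\le \sqrt{P_\epsilon(x)/p}\,|u|$. Using Cauchy--Schwarz together with the $\tau$-small estimate of Proposition \ref{pr3.1} then makes the Gateaux derivative continuous; its explicit form is precisely the left-hand side of the weak formulation of \eqref{Ceq3.3}.

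For (ii), let $\{w_n=(u_{1,n},u_{2,n})\}\subset\mathcal{H}_\epsilon$ satisfy $J_\epsilon(w_n)\to c$ and $J'_\epsilon(w_n)\to 0$. The first task is to prove $\|w_n\|_{\mathcal{H}_\epsilon}=O(1)$. I compute $J_\epsilon(w_n) - \tfrac{1}{2p}\langle J'_\epsilon(w_n),w_n\rangle$: the quadratic part produces $(\tfrac12-\tfrac{1}{2p})\|w_n\|^2_{\mathcal{H}_\epsilon}$, and a pointwise analysis shows that the decoupled integrand $\tfrac{1}{2p}g_\epsilon(x,s)s - G_\epsilon(x,s)$ vanishes on $\Lambda$ as well as on the untruncated part of $\mathbb{R}^N\setminus\Lambda$, while on the truncated part it equals $-(\tfrac12-\tfrac{1}{2p})\big(P_\epsilon(x)s^2 - P_\epsilon(x)^{p/(p-1)}\big)$, whose absolute value is controlled by $C P_\epsilon(x) s^2$. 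The analogous combination for the coupling integrand vanishes on $\Lambda$ and on the fully untruncated part of the exterior, and on the remaining region is pointwise bounded by $C|\beta| P_\epsilon(x)(u_{1,n}^2+u_{2,n}^2)$. Applying Proposition \ref{pr3.1} with $\tau$ small enough (allowable for $\epsilon<\epsilon_0$ suitably small) absorbs these penalty terms into the quadratic form, delivering $c_0\|w_n\|^2_{\mathcal{H}_\epsilon}\le C + o(1)\|w_n\|_{\mathcal{H}_\epsilon}$ with $c_0>0$, hence boundedness.

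Having extracted a weakly convergent subsequence $w_n\rightharpoonup w$ in $\mathcal{H}_\epsilon$, the goal is $\|w_n-w\|_{\mathcal{H}_\epsilon}\to 0$, derived from $\langle J'_\epsilon(w_n)-J'_\epsilon(w),w_n-w\rangle\to 0$ by showing that the nonlinear piece of this pairing vanishes. On $\Lambda$, local Rellich--Kondrachov compactness gives strong $L^{2p}(\Lambda)$ convergence and disposes of the polynomial and mixed-polynomial terms. On $\mathbb{R}^N\setminus\Lambda$, the compact embedding $H^1_{V_i,\epsilon}\hookrightarrow L^2(P_\epsilon\,dx)$ from Proposition \ref{pr3.1}, combined with the pointwise truncation bounds, handles the decoupled penalty terms immediately. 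I anticipate the main technical obstacle to be the coupling remainders of the form $\int \tilde{\mathfrak{g}}_\epsilon(u_{i,n})(u_{i,n}-u_i)\widetilde{\mathfrak{G}}_\epsilon(u_{j,n})$: one must bookkeep the four possible truncation/non-truncation combinations and exploit the uniform bounds $|\tilde{\mathfrak{g}}_\epsilon(u)|\le\sqrt{P_\epsilon/p}$, $|\widetilde{\mathfrak{G}}_\epsilon(u)|\le\sqrt{P_\epsilon/p}\,|u|$ together with Cauchy--Schwarz to reduce the convergence to the $L^2(P_\epsilon\,dx)$ compactness already at hand. Once this is secured, the pairing identity gives $\|w_n-w\|^2_{\mathcal{H}_\epsilon}\to 0$ and the Palais--Smale condition follows.
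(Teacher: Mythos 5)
Your proposal is correct and follows essentially the same route as the paper: for part (i), differentiability of the penalized functional is obtained by splitting $\Lambda$ from $\mathbb{R}^N\setminus\Lambda$, using dominated convergence together with the $\tau$-smallness in Proposition \ref{pr3.1} for the exterior terms; for part (ii), boundedness is derived from the combination $J_\epsilon(w_n)-\tfrac{1}{2p}\langle J'_\epsilon(w_n),w_n\rangle$ with the penalty remainders absorbed via Proposition \ref{pr3.1} for small $\epsilon$, and strong convergence then follows from the compact embedding into $L^2((P_\epsilon+\chi_\Lambda)\,dx)$ and local Sobolev embeddings on $\Lambda$. Your explicit pointwise computation of $\tfrac{1}{2p}g_\epsilon(x,s)s - G_\epsilon(x,s)$ on the truncated region, and the bound $C|\beta|P_\epsilon(u_1^2+u_2^2)$ for the coupling piece, are accurate and make the absorption step transparent.
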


\begin{proof}
For simplicity, we only show the term
$$
\mathcal{N}_{\epsilon}(w) = \int_{\mathbb{R}^N}\widetilde{\mathfrak{G}}_{\epsilon}(u_1)\widetilde{\mathfrak{G}}_{\epsilon}(u_2)
$$
belongs to $C^1(\mathcal{H}_{\epsilon},\R)$, since the other terms are similar.

Firstly, fixing every $\varphi= (\varphi_1,\varphi_2)\in \mathcal{H}_{\epsilon}$, for all $t\in\mathbb{R}$ with $|t| \leq 1$, by the triangle inequality, it holds
\begin{align*}
&\qquad\mathcal{N}_{\epsilon}(w + t\varphi) - \mathcal{N}_{\epsilon}(w)\\
&\leq C\Big(\chi_{\Lambda}(|u_1|^{2p} + |u_2|^{2p}  + |\varphi_1|^{2p} + |\varphi_2|^{2p})+ \chi_{\mathbb{R}^N\backslash\Lambda}P_{\epsilon}(|u_1|^{2} + |u_2|^{2}  + |\varphi_1|^{2} + |\varphi_2|^{2})\Big)\\
&\in L^1(\mathbb{R}^N).
\end{align*}
Then the existence of the first Gateaux derivative follows by Dominated Convergence Theorem and Proposition \ref{pr3.1}.

Secondly, given any $\varphi = (\varphi_1,\varphi_2)\in \mathcal{H}_{\epsilon}$ with $\|\varphi\|_{\epsilon}\leq 1$ and $w_n = (u^1_n,u^2_n)\in \mathcal{H}_{\epsilon}$ with $w_n\to w = (u_1,u_2)$ in $\mathcal{H}_{\epsilon}$, by Proposition \ref{pr3.1} and H\"{o}lder inequality, we have
\begin{align*}
 &\qquad |\langle \mathcal{N}'_{\epsilon}(w_n) - \mathcal{N}'_{\epsilon}(w),\varphi\rangle|\\
 &\leq\Big|\int_{\mathbb{R}^N}\tilde{\mathfrak{g}}_{\epsilon}(u^1_n)\varphi_1\widetilde{\mathfrak{G}}_{\epsilon}(u^2_n)  - \tilde{\mathfrak{g}}_{\epsilon}(u_1)\varphi_1\widetilde{\mathfrak{G}}_{\epsilon}(u_2)\Big|+ \Big|\int_{\mathbb{R}^N}\tilde{\mathfrak{g}}_{\epsilon}(u^2_n) \varphi_2\widetilde{\mathfrak{G}}_{\epsilon}(u^1_n) - \tilde{\mathfrak{g}}_{\epsilon}(u_2)\varphi_2\widetilde{\mathfrak{G}}_{\epsilon}(u_1)\Big|\\
 &: = I^1_n + I^2_n.
\end{align*}
For $I^1_n$, we have
\begin{align*}
I^1_n
& = \Big|\int_{\mathbb{R}^N}\tilde{\mathfrak{g}}_{\epsilon}(u^1_n)\varphi_1\widetilde{\mathfrak{G}}_{\epsilon}(u^2_n)  - \tilde{\mathfrak{g}}_{\epsilon}(u_1)\varphi_1\widetilde{\mathfrak{G}}_{\epsilon}(u_2)\Big|\\
&\leq \int_{\mathbb{R}^N}|\widetilde{\mathfrak{G}}_{\epsilon}(u^2_n)  - \widetilde{\mathfrak{G}}_{\epsilon}(u_2)||\tilde{\mathfrak{g}}_{\epsilon}(u^1_n)|\varphi_1|+ \int_{\mathbb{R}^N}|\tilde{\mathfrak{g}}_{\epsilon}(u^1_n) - \tilde{\mathfrak{g}}_{\epsilon}(u_1)||\varphi_1||\widetilde{\mathfrak{G}}_{\epsilon}(u_2)|\\
&: = I^{11}_n + I^{12}_n.
\end{align*}

By Dominated Convergence Theorem, Mean Value Theorem, Sobolev embedding theorem and \eqref{eq3.1}, we have
\begin{align*}
I^{11}_n
&\leq \|\tilde{g}_{\epsilon}(u^1_n)\varphi_1\|_{L^2(\mathbb{R}^N)}\Big(\int_{\mathbb{R}^N}|\widetilde{\mathfrak{G}}_{\epsilon}(u^2_n)  - \widetilde{\mathfrak{G}}_{\epsilon}(u_2)|^2\Big)^{\frac{1}{2}}\\
&\leq C\Big(\|u^2_n - u_2\|^{2p}_{H^1_{V,\epsilon}(\mathbb{R}^N)}
 + \int_{\Lambda^c} |\min\{\sqrt{P_{\epsilon}}, ((u^2_n)_++(u_2)_+)^{p-1}\}((u^2_n) _+ - (u_2)_+)|^2\\
&\leq C\Big(\|u^2_n - u_2\|^{2p}_{H^1_{V,\epsilon}(\mathbb{R}^N)} + \|u^2_n - u_2\|^{2}_{H^1_{V,\epsilon}(\mathbb{R}^N)} + o_n(1)\Big)^{\frac{1}{2}}\\
& = o_n(1)
\end{align*}
and
\begin{align*}
I^{12}_n
& = o_n(1).
\end{align*}
Similarly, we have $I^2_n = o_n(1)$. This completes the proof of $(i)$.

Next, we prove $(ii)$. Our aim is to verify that every sequence $(w_n) = (u^1_n,u^2_n)\in \mathcal{H}_{\epsilon}$ satisfies

(i) $J'_{\epsilon}(w_n)\to 0$ in $\mathcal{H}_{\epsilon}';$

(ii) $\sup_{n}J_{\epsilon}(w_n)\leq C < +\infty$

\noindent is relatively compact.

Firstly, by Proposition \ref{pr3.1}, there exists an $\epsilon_0>0$ such that if $0<\epsilon<\epsilon_0$,
\begin{align*}
c\|w_n\|^2_{\epsilon}&\leq\big(\frac{1}{2} - \frac{1}{2p}\big)\|w_n\|^2_{\epsilon} - \int_{\mathbb{R}^N\backslash\Lambda}\mathfrak{G}^1_{\epsilon}(u^1_n)- \frac{1}{2p} \min\{P_{\epsilon}(u^1_n)^2_+,(u^1_n)^{2p}_+\}\\
&\quad - \int_{\mathbb{R}^N\backslash\Lambda}\mathfrak{G}^2_{\epsilon}(u^2_n) - \frac{1}{2p} \min\{P_{\epsilon}(u^2_n) ^2_+,(u^2_n) ^{2p}_+\}\\
 &\quad+ \big(\frac{1}{2p} - \frac{1}{2p}\big)\int_{\Lambda}(u^1_n)^{2p}_+ + \big(\frac{1}{2p} - \frac{1}{2p}\big)\int_{\Lambda}(u^2_n) ^{2p}_+\\
&\quad - \int_{\mathbb{R}^N}p\beta\widetilde{\mathfrak{G}}_{\epsilon}(u^1_n)\widetilde{\mathfrak{G}}_{\epsilon}(u^2_n)  - \frac{\beta}{2}\widetilde{\mathfrak{g}}_{\epsilon}(u^1_n)u^1_n\widetilde{\mathfrak{G}}_{\epsilon}(u^2_n)  - \frac{\beta}{2}\widetilde{\mathfrak{g}}_{\epsilon}(u^2_n) u^2_n\widetilde{\mathfrak{G}}_{\epsilon}(u^1_n)\\
&\leq C + \frac{c}{2}\|u_n\|^2_{\epsilon}.
\end{align*}
Hence $\{w_n\}$ is bounded in $\mathcal{H}_{\epsilon}$ if $0<\epsilon<\epsilon_0$.

Now, going if necessary to a subsequence, we assume that $w_n = (u^1_n,u^2_n)\rightharpoonup w = (u_1,u_2)\in  \mathcal{H}_{\epsilon}$. By Proposition \ref{pr3.1} and Dominated Convergence Theorem, it holds
\begin{align*}
  \|w_n - w\|^2_{\mathcal{H}_{\epsilon}}
& \leq |\langle J'_{\epsilon}(w_n), w_n - w\rangle| + |\langle J'_{\epsilon}(w) - J'_{\epsilon}(w_n) ,w\rangle|\\
&\quad + \int_{\mathbb{R}^N}\mathfrak{g}_{\epsilon}(u^1_n)|u^1_n - u_1| + |\mathfrak{g}_{\epsilon}(u_1) - \mathfrak{g}_{\epsilon}(u^1_n)||u_1|\\
&\quad + \int_{\mathbb{R}^N}\mathfrak{g}_{\epsilon}(u^2_n) |u^2_n - u_2| + |\mathfrak{g}_{\epsilon}(u_2) - \mathfrak{g}_{\epsilon}(u^2_n) ||u_2|\\
&\quad + p\beta\int_{\mathbb{R}^N}\widetilde{\mathfrak{g}}_{\epsilon}(u^1_n)|u^1_n - u_1|\widetilde{\mathfrak{G}}_{\epsilon}(u^2_n) \\
&\quad + p\beta\int_{\mathbb{R}^N}|\widetilde{\mathfrak{g}}_{\epsilon}(u^1_n) - \widetilde{\mathfrak{g}}_{\epsilon}(u_1)||u_1|\widetilde{\mathfrak{G}}_{\epsilon}(u^2_n) \\
&\quad + p\beta\int_{\mathbb{R}^N}\widetilde{\mathfrak{g}}_{\epsilon}(u^2_n) |u^2_n - u_2|\widetilde{\mathfrak{G}}_{\epsilon}(u^1_n)\\
&\quad + p\beta\int_{\mathbb{R}^N}|\widetilde{\mathfrak{g}}_{\epsilon}(u^2_n)  - \widetilde{\mathfrak{g}}_{\epsilon}(u_2)||u_2|\widetilde{\mathfrak{G}}_{\epsilon}(u^1_n)\\
& = o_n(1).
\end{align*}
Then  $(w_n)$ is relatively compact in $\mathcal{H}_{\epsilon}$ and the conclusion follows.
\end{proof}

After showing that $J_{\epsilon}\in C^1(\mathcal{H}_{\epsilon},\R)$, we are going to prove the existence of nonstandard solutions to the penalized problem \eqref{Ceq3.3}. When $p\ge 2$ and $\beta>0$ is small, we have to construct nonstandard solutions with higher energy. For this purpose, we construct
 skillfully a two dimensional mountain path geometry and use \cite[Theorem 2.8]{MW} to find such solutions. We need the conclusion in Theorem \ref{important} in such
 an approach.

\begin{definition}\label{de3.5}
We say a path $\gamma\in C([0,1]^2,\mathcal{H}_{\epsilon})$ belongs to ${\Gamma}_{\epsilon}$ if
$$
\gamma({\tau}) = (tTU^{\epsilon}_{\textsf{m}_1,\beta},sTU^{\epsilon}_{\textsf{m}_2,\beta}),\ \ \forall\tau = (t,s)\in\partial[0,1]^2,
$$
where $U^{\epsilon}_{\textsf{m}_i,\beta}(\cdot) = U_{\textsf{m}_i,\beta}\Big(\frac{\cdot - \textsf{p}}{\epsilon}\Big)$ with $\textsf{p}\in\mathcal{M}$, $T> 0$ is a suitably large constant such that
\begin{equation*}\label{}
  J^i_{\epsilon}(TU^{\epsilon}_{\textsf{m}_i,\beta}) < 0,\ \ i = 1,2,
\end{equation*}
$J^i_{\epsilon}:H^1_{V_i,\epsilon}(\mathbb{R}^N)\to \mathbb{R}$ is defined as
$$
J^i_{\epsilon}(u) = \frac{1}{2}\|u\|^2_{V_i,\epsilon} - \int_{\mathbb{R}^N}\mathfrak{G}_{\epsilon}(u)-\frac{p\beta}{2}\int_{\R^N}(\widetilde{\mathfrak{G}}_{\epsilon}(u))^2.
$$
\end{definition}

\begin{lemma}\label{adle}
The mountain pass value
\begin{equation}\label{Ceq3.7}
  {c}^i_{\epsilon} = \inf_{{\Gamma}^i_{\epsilon}}\max_{t\in[0,1]}J^i_{\epsilon}(\gamma(t))
\end{equation}
can be achieved, where
$$
{\Gamma}^i_{\epsilon} = \{\gamma\in C([0,1],H^1_{V_i,\epsilon}(\mathbb{R}^N)):\gamma(0) = 0,\ \, J^i_{\epsilon}(\gamma(1))<0\}.
$$
Moreover, we can conclude that
\begin{equation}\label{eq3.7}
  \lim_{\epsilon\to 0}\frac{{c}^i_{\epsilon}}{\epsilon^N} =\mathcal{C}_{\textsf{m}_i,\beta},
\end{equation}
where $\mathcal{C}_{\textsf{m}_i,\beta}$ and $U_{\textsf{m}_i,\beta}$ are given in\eqref{eq1.6}.
\end{lemma}
We do not give the proof since it is the same as the estimates in Lemma \ref{le4.4} in the next section.

Let
\begin{equation}\label{CCeq3.9}
\beta_{\omega,p} = (1 + \omega^{\frac{p}{p - 1} - \frac{N}{2}})^{p - 1} - 1.
\end{equation}
Then, by the special choice of $\Gamma_{\epsilon}$ and Lemma \ref{adle}, we have:
\begin{lemma}\label{le3.8}
If $0<\beta<\beta_{\omega,p}$, then the mountain pass value
$$
{\mathcal{C}}_{\epsilon}: = \inf_{\gamma\in{\Gamma}_{\epsilon}}\max_{\tau\in[0,1]^2}J_{\epsilon}(\gamma(\tau))
$$
can be achieved by a function $w_{\epsilon}\in\mathcal{H}_{\epsilon}$ which solves the penalized equation \eqref{Ceq3.3}; moreover,
\begin{eqnarray}\label{eq3.9}
\begin{split}
&\mathcal{C}^*_{\textbf{m}_1,\textbf{m}_2,\beta}\ge \limsup_{\varepsilon\to 0}\frac{{\mathcal{C}}_{\varepsilon}}{\varepsilon^N}
\ge\liminf_{\varepsilon\to 0}\frac{{\mathcal{C}}_{\varepsilon}}{\varepsilon^N}\ge \mathcal{C}_{\textbf{m}_1,\beta} + \mathcal{C}_{\textbf{m}_2,\beta}
> \mathcal{C}_{\textbf{m}_2,0}\\
&\qquad\geq \limsup_{\varepsilon\to 0}\varepsilon^{-N}\sup_{\gamma\in{\Gamma}_{\varepsilon}}\max_{\tau\in \partial[0,1]^2}J_{\varepsilon}(\gamma(\tau)).
\end{split}
\end{eqnarray}
where $\mathcal{C}^*_{\textsf{m}_1,\textsf{m}_2,\beta}$ is defined in \eqref{eq1.7}.
\end{lemma}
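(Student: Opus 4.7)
The plan is to apply the two-dimensional min-max theorem \cite[Theorem 2.8]{MW} to $J_\epsilon$ on the class $\widetilde{\Gamma}_\epsilon$, using the Palais-Smale condition from Lemma \ref{le3.2}. Three estimates must be established: an upper bound for $\widetilde{\mathcal{C}}_\epsilon$, a lower bound for $\widetilde{\mathcal{C}}_\epsilon$, and an upper bound for the boundary supremum; the strict middle inequality will then follow from $\beta<\beta_{\omega,p}$. For the upper bound, the natural test path $\gamma_0(t,s)=(tTU^\epsilon_{\textsf{m}_1,\beta},sTU^\epsilon_{\textsf{m}_2,\beta})$ extends continuously to all of $[0,1]^2$ and lies in $\widetilde{\Gamma}_\epsilon$ by construction; changing variables $x=\textsf{p}+\epsilon y$ and applying the Dominated Convergence Theorem yield
\[
\limsup_{\epsilon\to 0}\frac{\widetilde{\mathcal{C}}_\epsilon}{\epsilon^N}\le \lim_{\epsilon\to 0}\frac{1}{\epsilon^N}\max_{(t,s)\in[0,1]^2}J_\epsilon(\gamma_0(t,s))=\mathcal{C}^*_{\textsf{m}_1,\textsf{m}_2,\beta}.
\]

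For the boundary estimate, observe that the restriction $\gamma|_{\partial[0,1]^2}$ is the same for every $\gamma\in\widetilde{\Gamma}_\epsilon$, so the outer supremum collapses to a pointwise computation. On the edges $\{t=0\}\cup\{s=0\}$ the coupling term of $J_\epsilon$ vanishes identically, and rescaling as above shows that the maximum of $J_\epsilon$ along each such edge divided by $\epsilon^N$ converges to $\mathcal{C}_{\textsf{m}_i,0}$; since $\textsf{m}_1\le\textsf{m}_2$ and $\mathcal{C}_{\cdot,0}$ is increasing in its first argument by \eqref{AAAeq1.21}, the dominant boundary value is $\mathcal{C}_{\textsf{m}_2,0}\epsilon^N$. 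On the remaining edges $\{t=1\}\cup\{s=1\}$ one component is fixed at $TU^\epsilon_{\textsf{m}_i,\beta}$ with large $T$, so the single-equation self-interaction drives $J_\epsilon$ strictly below that bound. The middle inequality reduces via \eqref{AAAeq1.21} to $(1+\beta)^{-1/(p-1)}(1+\omega^{p/(p-1)-N/2})>1$, which rearranges precisely to $\beta<\beta_{\omega,p}=(1+\omega^{p/(p-1)-N/2})^{p-1}-1$.

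The delicate estimate, and the main obstacle, is $\liminf_\epsilon \widetilde{\mathcal{C}}_\epsilon/\epsilon^N\ge\mathcal{C}_{\textsf{m}_1,\beta}+\mathcal{C}_{\textsf{m}_2,\beta}$. Step one is the pointwise comparison $J_\epsilon(w)\ge J^1_\epsilon(u_1)+J^2_\epsilon(u_2)$ valid for all nonnegative $w=(u_1,u_2)\in\mathcal{H}_\epsilon$: inside $\Lambda$ this is the AM-GM inequality $(u_1^{2p}+u_2^{2p})/(2p)\ge u_1^pu_2^p/p$, while outside $\Lambda$ the truncation caps defining $\widetilde{\mathfrak{G}}_\epsilon$ are tailored precisely so that $p\widetilde{\mathfrak{G}}_\epsilon(u_1)\widetilde{\mathfrak{G}}_\epsilon(u_2)\le \mathfrak{G}_\epsilon(u_1)+\mathfrak{G}_\epsilon(u_2)$ holds pointwise. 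Step two is to produce, for every $\gamma\in\widetilde{\Gamma}_\epsilon$, a point $(t^*,s^*)\in[0,1]^2$ at which $J^i_\epsilon(u_i^{t^*,s^*})\ge\tilde{c}^i_\epsilon$ holds simultaneously for $i=1,2$; combining this with step one and \eqref{eq3.7} gives the required lower bound. The natural method is a Poincar\'e-Miranda / Brouwer degree argument on the map $(t,s)\mapsto(\phi_1(t,s),\phi_2(t,s))$ with $\phi_i(t,s)=\langle (J^i_\epsilon)'(u_i^{t,s}),u_i^{t,s}\rangle$, exploiting that $\phi_1$ vanishes on $\{t=0\}$ and is strictly negative on $\{t=1\}$ by the choice of $T$, with analogous behavior for $\phi_2$. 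The degeneracy $\phi_i\equiv 0$ on two of the four edges is resolved either by a small linear perturbation followed by a compactness passage, or more robustly by introducing the Nehari rescaling $\lambda_i(t,s)>0$ defined where $u_i^{t,s}\ne 0$ and producing an interior zero of $(\lambda_1,\lambda_2)-(1,1)$; at such $(t^*,s^*)$ the Nehari characterization of $\tilde{c}^i_\epsilon$ furnishes the required bound. Assembling the three estimates yields $\sup_{\gamma\in\widetilde{\Gamma}_\epsilon}\max_{\tau\in\partial[0,1]^2}J_\epsilon(\gamma(\tau))<\widetilde{\mathcal{C}}_\epsilon$ for all sufficiently small $\epsilon$, and \cite[Theorem 2.8]{MW} together with Lemma \ref{le3.2} then produces the critical point $\hat{w}_\epsilon$ at level $\widetilde{\mathcal{C}}_\epsilon$.
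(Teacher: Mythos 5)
Your proposal is correct and follows essentially the same route as the paper's proof. The paper obtains the upper bound from the same test path $\gamma_0$, obtains the simultaneous lower bound for both components at a common parameter $\hat\tau$ via the Coti-Zelati--Rabinowitz topological intersection argument (Proposition~3.4 of \cite{CP1}), which is the same Miranda-type degree device you describe, and then invokes the comparison $J_\epsilon(w)\ge J^1_\epsilon(u_1)+J^2_\epsilon(u_2)$ (the paper attributes this to H\"older, you attribute it to AM-GM plus the truncation design; these are the same inequality) together with \eqref{eq3.7}. The only places where you supply more detail than the printed proof are the boundary estimate (which the paper simply asserts inside \eqref{eq3.9}) and the algebraic verification that the strict inequality $\mathcal{C}_{\textsf{m}_1,\beta}+\mathcal{C}_{\textsf{m}_2,\beta}>\mathcal{C}_{\textsf{m}_2,0}$ is equivalent to $\beta<\beta_{\omega,p}$ via \eqref{AAAeq1.21}; both elaborations are accurate and consistent with what the authors intend.
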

\begin{proof}
The upper bound follows by letting $(tTU^{\epsilon}_{\textsf{m}_1,\beta},sTU^{\epsilon}_{\textsf{m}_2,\beta})$ be a special path.

For each $\gamma\in\widetilde{\Gamma_{\varepsilon}}$, assuming that $\gamma(\tau) = (\gamma_1(\tau),\gamma_2(\tau))$, observing that for each continuous map $c:[0,1]\to [0,1]^2$ with $c(0)\in \{0\}\times[0,1]$ and $c(1)\in \{1\}\times [0,1]$, it holds
\begin{align*}
  J^1_{\epsilon}(\gamma_1(c(0))) =  0\ \text{and}\ J^1_{\epsilon}(\gamma_1(c(1))) < 0.
\end{align*}
Hence $\gamma_1(c(s))\in {\Gamma}^1_{\epsilon}$, which implies
$$
\max_{s\in[0,1]}J^1_{\epsilon}(\gamma_1(c(s)))\geq {c}^1_{\epsilon},
$$
where ${c}^i_{\epsilon}$ is given in \eqref{Ceq3.7}.
Similarly, letting $c:[0,1]\to [0,1]^2$ be a continuous map with $c(0)\in [0,1]\times\{0\}$ and $c(1)\in [0,1]\times\{1\}$, it holds
$$
\max_{t\in [0,1]}J^2_{\epsilon}(\gamma_2(c(t))\geq {c}^2_{\epsilon}.
$$
Now using the same argument as that of Proposition 3.4 in \cite{CP1}, we find  a $\hat{\tau}\in [0,1]^2$ such that
\begin{equation*}
  J^1_{\epsilon}(\gamma_1(\hat{\tau}))\geq {c}^1_{\epsilon},\ J^2_{\epsilon}(\gamma_2(\hat{\tau}))\geq {c}^2_{\epsilon}.
\end{equation*}
Thus by H\"{o}lder inequality and \eqref{eq3.7}, we have
\begin{align*}
  &\quad  \liminf_{\epsilon\to 0}\frac{1}{\epsilon^N}\max_{\tau\in[0,1]^2}J_{\epsilon}(\gamma(\tau))
\geq \liminf_{\epsilon\to 0}\frac{1}{\epsilon^N}\big({c}^1_{\epsilon} + {c}^2_{\epsilon}\big)= \mathcal{C}_{\textsf{m}_1,\beta} + \mathcal{C}_{\textsf{m}_2,\beta}> \mathcal{C}_{\textbf{m}_2,0}
\end{align*}
if $0<\beta<\beta_{\omega,p}$.
The lower bound then follows.

Now, by \eqref{eq3.9}, Theorem 2.8 in \cite{MW} and Lemma \ref{le3.2}, there exists a sequence $\{w_n = (u^1_n,u^2_n):n\in\mathbb{N}\}\subset \mathcal{H}_{\epsilon}$ converging strongly to $w_{\epsilon}$ in $\mathcal{H}_{\epsilon}$ such that
\begin{equation*}\label{PS-sequence}
 J_{\epsilon}(w_{\epsilon}) = {\mathcal{C}}_{\epsilon}\ \text{and}\ J'_{\epsilon}(w_{\epsilon}) = 0.
\end{equation*}
This completes the proof.
\end{proof}

At last, we show that the penalized solution given by $w_{\epsilon}$ Lemma \ref{le3.8} is nonstandard. This procedure is more difficult than the ground case. We observe firstly what will happen if $w_{\epsilon}$ is standard. We have the following Concentration-Compactness Lemma(see \cite{CZ} for the case $\epsilon = 1$):
\begin{lemma}\label{le3.9}(Concentration-Compactness Lemma)
If
$$
\lim_{\epsilon\to 0}\|{u}^l_{\epsilon}\|_{L^{\infty}(\Lambda)} = 0,
$$
then there exists $k_j>1,k_j\in\mathbb{N}$ and $\delta_1,\ldots,\delta_{k_j}\in [0, \sup_{x\in\Lambda}V_j(x) - \textbf{m}_j]$ such that
\begin{equation*}\label{eq3.11}
\lim_{\epsilon\to 0}\frac{{\mathcal{C}}_{\epsilon}}{\epsilon^N} =  \sum_{i = 1}^{k_j}\mathcal{C}_{\textsf{m}_j+\delta_i,0},
\end{equation*}
where $1\leq l\neq j\leq 2$.
\end{lemma}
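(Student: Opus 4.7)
The plan is to reduce the system to a single-equation concentration-compactness analysis by showing that the coupling term becomes asymptotically negligible, and then to apply the standard bubble decomposition to the non-vanishing component. The proof will proceed in three stages: decoupling, bubble extraction, and energy accounting.

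First, I would show that the hypothesis $\lim_{\epsilon\to 0}\|\hat{u}^l_\epsilon\|_{L^\infty(\Lambda)}=0$ forces the coupling contribution $\int_{\mathbb R^N} \widetilde{\mathfrak{G}}_\epsilon(\hat{u}^1_\epsilon)\widetilde{\mathfrak{G}}_\epsilon(\hat{u}^2_\epsilon)\,dx$ to be $o(\epsilon^N)$. Inside $\Lambda$, this is immediate by H\"older inequality together with the $L^\infty$ vanishing and the uniform $L^{2p}$ bound coming from $\sup_\epsilon \epsilon^{-N}J_\epsilon(\hat w_\epsilon) < +\infty$. Outside $\Lambda$, the penalization bound $\widetilde{\mathfrak{g}}_\epsilon(u)\le\sqrt{P_\epsilon}/\sqrt{p}$ combined with Proposition \ref{pr3.1} gives $\int_{\mathbb R^N\setminus\Lambda}\widetilde{\mathfrak{G}}_\epsilon(\hat{u}^l_\epsilon)^2\le \tau\|\hat{u}^l_\epsilon\|_\epsilon^2$ for any $\tau>0$ and small $\epsilon$. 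Consequently, the non-vanishing component $\hat{u}^j_\epsilon$ satisfies $-\epsilon^2\Delta \hat{u}^j_\epsilon + V_j\hat{u}^j_\epsilon = \mathfrak{g}_\epsilon(\hat{u}^j_\epsilon) + o_\epsilon(1)$ in a suitable dual sense, and
\begin{equation*}
\widetilde{\mathcal{C}}_\epsilon = J^j_\epsilon(\hat{u}^j_\epsilon) + o(\epsilon^N)
\end{equation*}
with $J^j_\epsilon$ as in Definition \ref{de3.5} but with coupling constant $\beta$ replaced by $0$.

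Next, I would run the classical bubble decomposition on $\hat{u}^j_\epsilon$. Let $x^j_{\epsilon,1}\in\overline{\Lambda}$ be a maximum point of $\hat{u}^j_\epsilon$ in $\overline{\Lambda}$; since $\hat u_\epsilon^j$ is non-vanishing in $\Lambda$, $\liminf_\epsilon \hat u^j_\epsilon(x^j_{\epsilon,1})>0$. Define the rescaling $v^j_{\epsilon,1}(y)=\hat u^j_\epsilon(x^j_{\epsilon,1}+\epsilon y)$ on $B_{R_\epsilon}(0)$ with $R_\epsilon\to\infty$. Standard elliptic bootstrapping applied to the single-equation limit produces, after passing to a subsequence, a nontrivial nonnegative limit $V^j_1$ solving $-\Delta V^j_1 + V_j(x_1^*)V^j_1 = (V^j_1)^{2p-1}$ in $\mathbb R^N$ where $x_1^* = \lim x^j_{\epsilon,1}\in\overline{\Lambda}$, so $V^j_1$ is a ground state and contributes energy $\mathcal{C}_{V_j(x^*_1),0}= \mathcal{C}_{\textsf{m}_j+\delta_1,0}$ with $\delta_1 := V_j(x_1^*)-\textsf{m}_j\in [0,\sup_\Lambda V_j - \textsf{m}_j]$. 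I would then iterate in the usual way: subtract a cut-off of the profile centered at $x^j_{\epsilon,1}$, let $x^j_{\epsilon,2}$ be a maximum of the remainder, show that $|x^j_{\epsilon,1}-x^j_{\epsilon,2}|/\epsilon\to\infty$ (otherwise both bubbles would merge into one), and extract a second profile. Finite termination of the iteration follows from the uniform bound $\widetilde{\mathcal{C}}_\epsilon/\epsilon^N\le \mathcal{C}^*_{\textsf{m}_1,\textsf{m}_2,\beta}$ in \eqref{eq3.9} together with the lower bound $\mathcal{C}_{V_j(x^*_i),0}\ge\mathcal{C}_{\textsf{m}_j,0}>0$ for every bubble (Proposition \ref{pr2.5}).

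Finally I would establish exact energy accounting. Using the disjointness of the $\epsilon$-neighborhoods of the centers $x^j_{\epsilon,i}$, the Brezis--Lieb type decomposition gives
\begin{equation*}
\frac{J^j_\epsilon(\hat u^j_\epsilon)}{\epsilon^N} = \sum_{i=1}^{k_j}\mathcal{C}_{\textsf{m}_j+\delta_i,0} + o_\epsilon(1),
\end{equation*}
and combining with the decoupling estimate of the first step yields the stated identity. The lower bound $k_j>1$ is obtained by contradiction: if $k_j=1$, then $\lim\widetilde{\mathcal{C}}_\epsilon/\epsilon^N = \mathcal{C}_{\textsf{m}_j+\delta_1,0}\le\mathcal{C}_{\textsf{m}_2,0}$, contradicting the strict lower bound $\mathcal{C}_{\textsf{m}_1,\beta}+\mathcal{C}_{\textsf{m}_2,\beta}>\mathcal{C}_{\textsf{m}_2,0}$ in \eqref{eq3.9}.

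The main obstacle I expect is controlling the tail of $\hat u^j_\epsilon$ outside the bubbles when $V_{\min}$ decays fast or is compactly supported. Because the penalized nonlinearity $g_\epsilon$ is only subquadratic outside $\Lambda$, one must argue that no energy is dissipated in the far region; here the uniform $L^2(P_\epsilon dx)$ estimate from Proposition \ref{pr3.1} and the fact that concentration centers $x^j_{\epsilon,i}\in\overline{\Lambda}$ are bounded away from $\partial\Lambda$ (which requires a preliminary exclusion of boundary concentration, obtainable by translating the profile test path for $\widetilde{\mathcal{C}}_\epsilon$) will be essential.
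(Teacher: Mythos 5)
Your overall strategy—decouple the system by killing the contribution of the vanishing component, run a bubble decomposition on the surviving component, and account for the energy via disjointness of the $\epsilon$-scale profiles—matches the paper's approach in Section 4. The paper establishes $\|\hat u^l_{\epsilon_n}\|^2_{V_l,\epsilon_n}/\epsilon_n^N\to 0$ directly (testing the penalized system against $w_{\epsilon_n}$ and using $\|\hat u^l\|_{L^\infty(\Lambda)}\to 0$ plus the penalization bound) rather than estimating the coupling term separately, and its iteration uses a case distinction on whether $\|\hat u^j_{\epsilon_n}\|_{L^\infty(U\setminus\bigcup_i B_{\epsilon_n R}(x^i_{\epsilon_n}))}\to 0$ together with cut-off test functions (as in \eqref{eq4.10}, \eqref{eq4.15}) instead of an explicit Brezis--Lieb subtraction; these are cosmetic differences and your version would work just as well.

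There is, however, a genuine error in the step where you argue $k_j>1$. You write that if $k_j=1$ then $\lim\widetilde{\mathcal C}_\epsilon/\epsilon^N=\mathcal C_{\textsf m_j+\delta_1,0}\le\mathcal C_{\textsf m_2,0}$, but this inequality is false: by the monotonicity of Proposition \ref{pr2.5}, $\mathcal C_{\textsf m_2+\delta_1,0}\ge\mathcal C_{\textsf m_2,0}$ whenever $\delta_1\ge 0$, so for $j=2$ the inequality runs the wrong way, and for $j=1$ one would need $\sup_\Lambda V_1\le\textsf m_2$, which is not among the hypotheses. The correct way to rule out $k_j=1$ (and in fact any $k_j$) requires the extra input \eqref{reviseeq} together with \eqref{eq1.4}: for $j=1$, condition \eqref{reviseeq} gives $\delta_1<\vartheta(\beta)$ so by \eqref{eq1.4} (with $k=1$) the single value $\mathcal C_{\textsf m_1+\delta_1,0}$ cannot land in the interval $[\mathcal C_{\textsf m_1,\beta}+\mathcal C_{\textsf m_2,\beta},\mathcal C^*_{\textsf m_1,\textsf m_2,\beta}]$ containing $\lim\widetilde{\mathcal C}_\epsilon/\epsilon^N$, while for $j=2$, \eqref{reviseeq} directly forces $\mathcal C_{\textsf m_2+\delta_1,0}<\mathcal C_{\textsf m_1,\beta}+\mathcal C_{\textsf m_2,\beta}$, contradicting the lower bound in \eqref{eq3.9}. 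The paper itself does not prove $k_j>1$ inside the proof of Lemma \ref{le3.9}; the contradiction is only drawn afterwards in Lemma \ref{Cle3.8} by combining \eqref{eq3.11} with \eqref{eq1.4} and \eqref{reviseeq}. So either defer the $k_j>1$ claim to that later argument, or import \eqref{reviseeq}/\eqref{eq1.4} explicitly—the monotonicity alone will not do it.
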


For the sake of simplicity, we give the proof in the next section.

It is easy to see that there is a contradiction between \eqref{eq1.4}, \eqref{reviseeq} and \eqref{eq3.11}. Hence, rearranging $\tilde{\beta}_{\textsf{m}_1,\textsf{m}_2,p}<\beta_{\omega,p}$, where $\tilde{\beta}_{\textsf{m}_1,\textsf{m}_2,p}$ is the constant in Theorem \ref{important}, it immediately holds:
\begin{lemma}\label{Cle3.8}
Let \eqref{reviseeq} hold. Then the critical points $\{w_{\epsilon}:0<\epsilon<\epsilon_0\}$ obtained by Lemma \ref{le3.8} are nonstandard, i.e.,
\begin{equation*}\label{eq3.13}
  \liminf_{\epsilon\to 0}\|{u}^1_{\epsilon}\|_{L^{\infty}(\R^N)} > 0\ \ \text{and}\ \liminf_{\epsilon\to 0}\|{u}^2_{\epsilon}\|_{L^{\infty}(\R^N)} > 0.
\end{equation*}
\end{lemma}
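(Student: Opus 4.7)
The plan is to argue by contradiction, using the Concentration-Compactness Lemma \ref{le3.9} to produce an asymptotic energy decomposition that then violates either the arithmetic obstruction \eqref{eq1.4} of Theorem \ref{important} or the strict inequality \eqref{AAAeq1.22}. Suppose the conclusion fails. Up to a subsequence, there is an index $l\in\{1,2\}$ with
$$
\lim_{\epsilon\to 0}\|\hat{u}^l_{\epsilon}\|_{L^{\infty}(\Lambda)}=0.
$$
Applying Lemma \ref{le3.9} with $j:=3-l$, I obtain an integer $k_j\geq 2$ and weights $\delta_1,\ldots,\delta_{k_j}\in[0,\sup_{x\in\Lambda}V_j(x)-\textbf{m}_j]$ such that
$$
\lim_{\epsilon\to 0}\frac{\widetilde{\mathcal{C}}_{\epsilon}}{\epsilon^{N}}=\sum_{i=1}^{k_j}\mathcal{C}_{\textbf{m}_j+\delta_i,\,0}.
$$

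Next I split on $l$. If $l=2$, so $j=1$, the first inequality of \eqref{reviseeq} forces $\delta_i\in[0,\vartheta(\beta))$ for every $i$, while the two-sided estimate \eqref{eq3.9} of Lemma \ref{le3.8} pins the above sum in the interval $[\mathcal{C}_{\textbf{m}_1,\beta}+\mathcal{C}_{\textbf{m}_2,\beta},\,\mathcal{C}^*_{\textbf{m}_1,\textbf{m}_2,\beta}]$. Because the constant $\bar{\beta}$ is chosen in Theorem \ref{th1.2} so that $0<\beta<\bar{\beta}\leq\tilde{\beta}_{\textbf{m}_1,\textbf{m}_2,p}$, the nonexistence statement \eqref{eq1.4} of Theorem \ref{important} rules out any such representation, giving a contradiction.

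If instead $l=1$, so $j=2$, I would first use the second inequality of \eqref{reviseeq} together with the monotonicity of $\alpha\mapsto\mathcal{C}_{\alpha,0}$ (Proposition \ref{pr2.5}) to bound each summand from above, $\mathcal{C}_{\textbf{m}_2+\delta_i,0}\leq\sup_{x\in\Lambda}\mathcal{C}_{V_2(x),0}<\mathcal{C}_{\textbf{m}_1,\beta}+\mathcal{C}_{\textbf{m}_2,\beta}$, so the lower bound in \eqref{eq3.9} indeed forces $k_2\geq 2$, consistent with the lemma. Then, combining $k_2\geq 2$ with $\textbf{m}_1\leq\textbf{m}_2$ and Proposition \ref{pr2.5} to bound the summands from below,
$$
\sum_{i=1}^{k_2}\mathcal{C}_{\textbf{m}_2+\delta_i,\,0}\;\geq\;2\,\mathcal{C}_{\textbf{m}_2,0}\;\geq\;\mathcal{C}_{\textbf{m}_1,0}+\mathcal{C}_{\textbf{m}_2,0}\;>\;\mathcal{C}^*_{\textbf{m}_1,\textbf{m}_2,\beta},
$$
where the last strict inequality is \eqref{AAAeq1.22}. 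This contradicts the upper bound $\limsup_{\epsilon\to 0}\widetilde{\mathcal{C}}_{\epsilon}/\epsilon^N\leq\mathcal{C}^*_{\textbf{m}_1,\textbf{m}_2,\beta}$ of Lemma \ref{le3.8}.

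The genuinely delicate ingredient is not this dichotomy itself but the previously established Theorem \ref{important}, which provides the arithmetic gap exploited in the $l=2$ case; once that result and Lemma \ref{le3.9} are in hand, the present lemma is essentially a matter of reading off the correct estimates and combining them with the hypotheses \eqref{reviseeq}.
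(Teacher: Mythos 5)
Your proof is correct and follows the paper's strategy: argue by contradiction, invoke the Concentration-Compactness Lemma \ref{le3.9}, and then pit the resulting decomposition $\sum_i \mathcal{C}_{\textsf{m}_j+\delta_i,0}$ against the bracketing of the mountain-pass level from \eqref{eq3.9}. Your write-up is in fact more careful than the paper's one-line assertion, since you correctly separate the $l=2$ case (where the first inequality in \eqref{reviseeq} plus \eqref{eq1.4} applies directly) from the $l=1$ case (where one must instead use the second inequality in \eqref{reviseeq} together with $\textsf{m}_1\leq\textsf{m}_2$ and \eqref{AAAeq1.22}), a distinction the paper leaves implicit.
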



\section{Concentration of the penalized solutions}\label{s4}

In this section we prove Lemma \ref{le3.9} and the concentration phenomenon of the penalized solution $w_{\epsilon}$ in Lemma \ref{le3.8}. Some blow-up analysis of Lemma \ref{le3.9} are similar to the proof of the concentration phenomenon of the penalized solution $w_{\epsilon}$. For this reason, we first give the proof of Lemma \ref{le3.9}.

Before to prove Lemma \ref{le3.9}, we need a Liouville type theorem for systems on a half-space.

\begin{lemma}\label{le4.2}
Let $\beta>-1$ and $H\subset \mathbb{R}^N$ be a half-plane. If $u_1,u_2\geq 0$ satisfy the following system
\begin{align*}
\left\{
  \begin{array}{ll}
   - \Delta u_1 + \alpha_1 u_1= \chi_{H}u_1^{2p - 1} + \beta \chi_{H}u_1^{p - 1}u_2^p, & x\in \mathbb{R}^N,\vspace{0.12cm}\\
   -\Delta u_2 + \alpha_2 u_2= \chi_{H}u_2^{2p - 1}  + \beta \chi_{H}u_2^{p - 1}u_1^p, & x\in \mathbb{R}^N,
  \end{array}
\right.
\end{align*}
then $(u_1,u_2) = (0,0)$.
\end{lemma}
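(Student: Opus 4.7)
The strategy is three-fold: first exploit the discontinuity of $\chi_H$ to show the traces of $u_1,u_2$ on $\pa H$ vanish; second, conclude $u_i\equiv 0$ on $\R^N\setminus\overline H$ via an energy identity; third, invoke unique continuation on $H$. Without loss of generality take $H=\{x_N>0\}$, and assume (as in all intended applications to blow-up limits of bounded penalized solutions with decay) that the $u_i$ have sufficient decay at infinity to justify the global integrations by parts below.

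For the boundary-trace step, multiply the $i$-th equation by $\pa_{x_N}u_i$ and integrate over $\R^N$. Because $(-\De u_i+\al_i u_i)\pa_{x_N}u_i=\tfrac12\pa_{x_N}\bigl(|\nabla u_i|^2+\al_i u_i^2\bigr)$ is a total $x_N$-derivative, the linear side contributes nothing. Summing over $i=1,2$ on the right-hand side and recognizing that the coupling contributions combine as $\be\bigl(u_2^p\pa_{x_N}u_1^p+u_1^p\pa_{x_N}u_2^p\bigr)=\be\,\pa_{x_N}(u_1^pu_2^p)$, the divergence theorem on $H$ produces
\[
\int_{\pa H}\bigl(u_1^{2p}+u_2^{2p}+2\be\,u_1^pu_2^p\bigr)\,dS=0.
\]
Since $\be>-1$ and $u_1,u_2\ge 0$, the elementary inequality $2u_1^pu_2^p\le u_1^{2p}+u_2^{2p}$ yields the pointwise bound $u_1^{2p}+u_2^{2p}+2\be u_1^pu_2^p\ge \min(1,1+\be)\bigl(u_1^{2p}+u_2^{2p}\bigr)\ge 0$. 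Hence $u_1=u_2=0$ on $\pa H$.

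In the second step, on $\R^N\setminus\overline H$ each equation collapses to the linear homogeneous problem $-\De u_i+\al_i u_i=0$ with $u_i\ge 0$ and $u_i=0$ on $\pa H$. Testing against $u_i$ and integrating over $\R^N\setminus H$ kills the boundary term and leaves $\int_{\R^N\setminus H}\bigl(|\nabla u_i|^2+\al_i u_i^2\bigr)\,dx=0$, forcing $u_i\equiv 0$ on $\R^N\setminus\overline H$. Interior elliptic regularity then lifts $u_i$ to $C^1(\R^N)$, so $\nabla u_i=0$ on $\pa H$ as well, giving vanishing Cauchy data there.

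Finally on $H$, the pair $(u_1,u_2)$ solves the original system with zero Cauchy data on the hyperplane $\pa H$; since the $u_i$ are bounded and $p\ge 1$, the right-hand sides can be rewritten as bounded coefficients multiplying $u_1,u_2$, so the Aronszajn--Carleman unique continuation principle for elliptic systems forces $u_1\equiv u_2\equiv 0$ on $H$, completing the proof. I expect the main obstacle to be the first step: the surface identity becomes a pointwise vanishing statement only through the sign condition $\be>-1$, which makes the quadratic form of $(u_1^p,u_2^p)$ coercive on the nonnegative cone; for strongly repulsive couplings $\be\le-1$ this coercivity is lost and the scheme breaks down.
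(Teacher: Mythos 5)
Your first step is precisely what the paper does: test the $i$-th equation against $\partial_{x_N}u_i$, observe that the linear side is a total $x_N$-derivative and integrates to zero (justified in the paper by noting $u_i\in H^2(\mathbb R^N)$ via elliptic regularity), add the two identities so the cross terms combine into $\partial_{x_N}(u_1^pu_2^p)$, and conclude that the boundary integral of $u_1^{2p}+u_2^{2p}+2\beta u_1^pu_2^p$ vanishes. Your coercivity estimate $u_1^{2p}+u_2^{2p}+2\beta u_1^pu_2^p\ge\min(1,1+\beta)(u_1^{2p}+u_2^{2p})$ for $\beta>-1$ is correct and gives the trace vanishing on $\partial H$.

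Where you depart from the paper is in the finishing step. The paper simply invokes the strong maximum principle at this point: $u_i\ge 0$ vanishes at an interior point of $\mathbb R^N$ (namely any point of $\partial H$) and solves an elliptic equation that can be rewritten as $-\Delta u_i + c_i(x)u_i\ge 0$ with locally bounded $c_i$, whence $u_i\equiv 0$. You instead pass through an explicit two-stage argument — an energy identity on $\mathbb R^N\setminus\overline H$ showing $u_i\equiv 0$ there (hence vanishing Cauchy data on $\partial H$), followed by unique continuation on $H$ using the bound $|\Delta u_i|\le C(u_1+u_2)$, which holds for all $p>1$. Both routes are valid; yours is a bit longer but somewhat more robust in the edge case $1<p<2$ with $\beta<0$, where writing the nonlinearity as a bounded potential times $u_1$ requires some care (e.g., $u_1^{p-2}$ is not a priori locally bounded), whereas the factorization $u_1^{p-1}u_2^p=(u_1^{p-1}u_2^{p-1})\cdot u_2$ you implicitly use for unique continuation stays bounded for every $p>1$. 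The decay needed for the global integration by parts, which you assume, is exactly what the paper supplies via the observation $u_i\in H^2(\mathbb R^N)$.
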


\begin{proof}
Without loss of generality, we assume $H = \mathbb{R}^{N - 1}\times(0,\infty)$.
By the classical regularity argument in \cite{GT}, we have $u_i\in H^2(\mathbb{R}^N)$. Testing the equation against $(\partial_N u_1,\partial_Nu_2)$, we find
\begin{align*}
\left\{
  \begin{array}{ll}
   0 = \int_{\mathbb{R}^{N - 1}\times (0,\infty)}u_1^{2p - 1}\partial_N u_1 + \beta\int_{\mathbb{R}^{N - 1}\times (0,\infty)}u_1^{p - 1}u_2^p\partial_N u_1, & x\in \mathbb{R}^N,\vspace{0.12cm}\\
   0 = \int_{\mathbb{R}^{N - 1}\times (0,\infty)}u_2^{2p - 1}\partial_N u_2  + \beta\int_{\mathbb{R}^{N - 1}\times (0,\infty)} u_2^{p - 1}u_1^p\partial_N u_2, & x\in \mathbb{R}^N.
  \end{array}
\right.
\end{align*}
It follows that
\begin{align*}
\frac{1}{2p}\int_{\mathbb{R}^{N - 1}}\big(u_1^{2p}(x',0) + u_2^{2p}(x',0)\big)dx' + \frac{\beta}{p}\int_{\mathbb{R}^{N - 1}}u_1^{p}(x',0)u_2^{p}(x',0)dx'
=0.
\end{align*}
So $u_1(x),u_2(x) = 0$ on $\partial\mathbb{R}^N_+$ if $\beta>-1$. Then by the
strong maximum principle we have $u_1(x),u_2(x) \equiv 0.$
\end{proof}

\textbf{Proof of Lemma \ref{le3.9}.}
Without loss of generality, we assume that there exists a sequence of $(\epsilon_n)$, $\epsilon_n > 0$ with $\lim_{n\to\infty}\epsilon_n = 0$ such that
\begin{equation}\label{eq4.17}
\lim_{n\to\infty}\|u^1_{\epsilon_n}\|_{L^{\infty}(\Lambda)} = 0.
\end{equation}
But, testing \eqref{Ceq3.3} against with $w_{\epsilon_n}$, by  \eqref{eq3.1}, we find
\begin{eqnarray*}
\lim_{n\to\infty}\|u^1_{\epsilon_n} + u^2_{\epsilon_n}\|_{L^{\infty}(\Lambda)} > 0.
\end{eqnarray*}
Hence
\begin{equation*}
  \liminf_{n\to\infty}\|u^2_{\epsilon_n}\|_{L^{\infty}(\Lambda)} > 0.
\end{equation*}
Then, by standard regularity argument in \cite{GT}, we assume that there is a $x^2_{\epsilon_n}\in\overline{\Lambda}$ such that
$$
u^2_{\epsilon_n}(x^2_{\epsilon_n}) = \sup_{\Lambda}u^2_{\epsilon_n}(x).
$$
Let $\tilde{w}_{\epsilon_n}(\cdot) = \big(u^1_{\epsilon_n}(\epsilon_n \cdot + x^2_{\epsilon_n}),u^2_{\epsilon_n}(\epsilon_n \cdot + x^2_{\epsilon_n})\big) = (\tilde{u}^1_{\epsilon_n}(\cdot),\tilde{u}^2_{\epsilon_n}(\cdot))$, $\widetilde{V}^i_{\epsilon_n}(\cdot) = V_i(\epsilon_n \cdot + x^2_{\epsilon_n})$, $\hat{g}_{\epsilon_n}(\cdot) = g_{\epsilon_n}(\epsilon_n x + x^2_{\epsilon_n},\cdot)$, $\widehat{G}_{\epsilon_n}(\cdot) = G_{\epsilon_n}(\epsilon_n x + x^2_{\epsilon_n},\cdot)$, $\breve{g}_{\epsilon_n}(\cdot) = \tilde{g}_{\epsilon_n}(\epsilon_n x + x^2_{\epsilon_n},\cdot)$ and $\breve{G}_{\epsilon_n}(\cdot) = \widetilde{G}_{\epsilon_n}(\epsilon_n x + x^2_{\epsilon_n},\cdot)$.
It is easy to check $\tilde{w}_{\epsilon_n}$ satisfies
\begin{align}\label{eq4.6}
\left\{
  \begin{array}{ll}
    -\Delta \tilde{u}^1_{\epsilon_n} + \tilde{V}^1_{\epsilon_n}(x)\tilde{u}^1_{\epsilon_n} = \hat{g}_{\epsilon_n}(\tilde{u}^1_{\epsilon_n}) + p\beta\breve{g}_{\epsilon_n}(\tilde{u}^1_{\epsilon_n})
\breve{G}_{\epsilon_n}(\tilde{u}^2_{\epsilon_n}), &  \text{in}\,\,\mathbb{R}^N,\vspace{0.12cm}\\
    -\Delta \tilde{u}^2_{\epsilon_n} + \tilde{V}^2_{\epsilon_n}(x)\tilde{u}^2_{\epsilon_n} = \hat{g}_{\epsilon_n}(\tilde{u}^2_{\epsilon_n}) + p\beta\breve{g}_{\epsilon_n}(\tilde{u}^2_{\epsilon_n})
\breve{G}_{\epsilon_n}(\tilde{u}^1_{\epsilon_n}), &\text{in}\,\,\mathbb{R}^N.
  \end{array}
\right.
\end{align}
Testing \eqref{eq4.6} against with $(\tilde{u}^1_n,0)$, by \eqref{eq4.17}, we have
$$
\limsup_{n\to\infty}\frac{1}{\epsilon_n^N}\int_{\mathbb{R}^N}\epsilon_n^2|\nabla u^1_{\epsilon_n}|^2 + V_1(x)|u^1_{\epsilon_n}|^2dx\leq C \limsup_{n\to\infty}\|u^1_{\epsilon_n}\|^{2p - 2}_{L^{\infty}(\Lambda)} = 0,
$$
which implies
\begin{equation}\label{addeq3.1}
  \lim_{n\to\infty}\frac{\widetilde{\mathcal{C}}_{\epsilon_n}}{\epsilon^N_n}= \lim_{n\to\infty}\frac{1}{2}\int_{\R^N}|\nabla \tilde{u}^2_{n}|^2 + V^2_n(x)|\tilde{u}^2_{n}|^2 - \int_{R^N}\widehat{{G}}_{n}(\tilde{u}^2_n).
\end{equation}

For every $R > 0$, it holds that
\begin{align}\label{eq4.4}
\nonumber&\quad\lim_{n\to \infty}\int_{B_R}|\nabla \tilde{u}^i_{\epsilon_n}|^2 + V_{i}(x^2_*)|\tilde{u}^i_{\epsilon_n}|^2dx\\
&\leq \int_{B_R}|\nabla \tilde{u}^i_{\epsilon_n}|^2 + V^i_{\epsilon_n}(x)|\tilde{u}^i_{\epsilon_n}|^2dx\\
\nonumber& = \frac{1}{\epsilon_n^N}\int_{B_{\epsilon_n R}(x^2_{\epsilon_n})}\epsilon_n^2|\nabla u^i_{\epsilon_n}|^2 + V_i(x)|u^i_{\epsilon_n}|^2dx\\
&<\infty,
\end{align}
where we assume that $\lim_{n\to\infty}x^2_{\epsilon_n} = x^2_*$.
Then by diagonal argument, there exists $u^i_*\in H^1_{loc}(\mathbb{R}^N)$ such that $\tilde{u}^i_{\epsilon_n}\rightharpoonup u^i_*$ weakly in $H^1(B_R)$ as $\epsilon_n\to 0$. Since
$$
\int_{B_R}|\nabla u^i_*|^2 + V_{i}(x^2_*)|u^i_*|^2
\leq \liminf_{\epsilon_n\to 0}\frac{1}{\epsilon_n^N}\int_{\mathbb{R}^N}\epsilon_n^2|\nabla u^i_{\epsilon_n}|^2 + V_i(x)|u^i_{\epsilon_n}|^2dx < \infty,
$$
we have $u^i_*\in H^1(\mathbb{R}^N)$.
Especially, we have
\begin{equation}\label{eq4.5}
\|u^1_*\|^2_{H^1(\mathbb{R}^N)}
\leq \liminf_{\epsilon_n\to 0}\frac{1}{\epsilon_n^N}\int_{\mathbb{R}^N}\epsilon_n^2|\nabla u^i_{\epsilon_n}|^2 + V_i(x)|u^i_{\epsilon_n}|^2dx\leq C \liminf_{\epsilon_n\to 0}\|u^1_{\epsilon_n}\|^{2p - 2}_{L^{\infty}(\Lambda)} = 0,
\end{equation}
which implies that $u^1_* = 0$.

Denote $\Lambda^2_{\epsilon_n}:=\{x\in\R^N:\varepsilon_n x + x^2_{\epsilon_n}\in \Lambda\}$. Since $\Lambda$ is smooth, we have $\Lambda^2_{\epsilon_n}\to \Lambda^2_*\in\{\emptyset,H,\R^N\}$ as $n\to+\infty$, where $H$ is a half space. Following, by Cauchy inequality and the definition of $P_{\epsilon}$ in \eqref{eq3.1}, for every $\varphi\in C^{\infty}_c(\mathbb{R}^N)$, we have
\begin{align*}
&\quad \lim_{n\to\infty}p\beta\int_{\mathbb{R}^N}\varphi\breve{g}_{\epsilon_n}
(\tilde{u}^2_{\epsilon_n})
\breve{G}_{\epsilon_n}(\tilde{u}^1_{\epsilon_n}) = \int_{\R^N}\beta\varphi\chi_{\Lambda^2_*}(u^1_*)^{p}(u^2_*)^{p - 1}
\end{align*}
Hence, by a similar proof, we conclude that $w_* = (u^1_*,u^2_*)$ satisfies
\begin{equation}\label{eq4.7}
  \left\{
  \begin{array}{ll}
   -\Delta {u}^{1}_* + V_1(x^2_*){u}^{1}_*= \chi_{\Lambda^2_*}({u}^{1}_*)^{2p - 1} + \beta \chi_{\Lambda^2_*}({u}^{1}_*)^{p - 1}({u}^{2}_*)^p, & x\in \mathbb{R}^N,\vspace{0.12cm}\\
   -\Delta {u}^{2}_* + V_2(x^2_*){u}^{2}_*= \chi_{\Lambda^2_*}({u}^{2}_*)^{2p - 1}  + \beta \chi_{\Lambda^2_*}({u}^{2}_*)^{p - 1}({u}^{1}_*)^p, & x\in \mathbb{R}^N.
  \end{array}
\right.
\end{equation}
But since $u^1_* = 0$, system \eqref{eq4.7} is
\begin{equation}\label{adeq1}
-\Delta u^2_* + V_2(x^2_*)u^2_* = \chi_{\Lambda^2_*}(u^2_*)^{2p - 1}\,\,\text{in}\,\,\mathbb{R}^N.
\end{equation}
By Lemma \ref{le4.2} and the regularity argument in \cite{GT} we conclude that $u^2_*\not\equiv 0$. Hence $\Lambda^2_* = \mathbb{R}^N$, i.e.,
\begin{equation}\label{eq4.8}
  -\Delta u^2_* + V_2(x^2_*)u^2_* = (u^2_*)^{2p - 1}\,\,\text{in}\,\,\mathbb{R}^N.
\end{equation}

Now, if
\begin{equation*}\label{}
  \lim\limits_{{R\to \infty}\atop{{n\to\infty}}}\|u^2_{\epsilon_n}\|_{L^{\infty}(U\backslash B_{\epsilon_n R}(x^2_{\epsilon_n}))} =  0,
\end{equation*}
then letting $\eta_R$ be a smooth function with $0\leq \eta_R \leq 1$, $\eta_R = 0$ in $B_{\frac{R}{2}}$ and $\eta_R \equiv 1$ on $B^c_R$, testing \eqref{eq4.6} against with $(0,\eta_R \tilde{u}^{2}_n)$, we find
\begin{eqnarray*}
\begin{split}
&\quad \lim_{n\to\infty}\int_{\mathbb{R}^N\backslash B_R}|\nabla \tilde{u}^{2}_n|^2 + V^2_n(x)|\tilde{u}^{2}_n|^2\\
&\leq \lim_{n\to\infty}\Big(\int_{\mathbb{R}^N}|\nabla \eta_R||\tilde{u}^{2}_n||\nabla \tilde{u}^{2}_n| + \int_{\Lambda^2_n\backslash B_{\frac{R}{2}}}|\tilde{u}^{2}_n|^{2p} + \int_{\mathbb{R}^N\backslash\Lambda^2_{n}}P_{\epsilon_n}(\epsilon_n x + x^2_{\epsilon_n})|\tilde{u}^{2}_n|^{2}\Big)\\
&\leq \lim_{n\to\infty}\Big(\int_{\mathbb{R}^N}|\nabla \eta_R||\tilde{u}^{2}_n||\nabla \tilde{u}^{2}_n| + \|\tilde{u}^{2}_n\|^{2p - 2}_{L^{\infty}\big(U^2_{\epsilon_n}\backslash B_{\frac{R}{2}}\big)}\int_{\Lambda^2_n\backslash B_{\frac{R}{2}}}|\tilde{u}^{2}_n|^{2}\Big)\\
&\qquad + \lim_{n\to\infty}\Big(\int_{\mathbb{R}^N\backslash\Lambda^2_{n}}P_{\epsilon_n}(\epsilon_n x + x^2_{\epsilon_n})|\tilde{u}^{2}_n|^{2}\Big)\\
&= o_R(1).
\end{split}
\end{eqnarray*}
Following, by \eqref{eq4.6}, \eqref{eq4.8} and the uniqueness of positive solutions (in $H^1$) of $-\Delta u + u = u^{2p-1}$, we have
\begin{equation*}\label{addeq3.1}
  \lim_{n\to\infty}\frac{\widetilde{\mathcal{C}}_{\epsilon_n}}{\epsilon^N_n}= \Big(\frac{1}{2} - \frac{1}{2 p}\Big)\int_{\R^N}|\nabla u^2_*|^2 + V_2(x^2_*)|u^2_*|^2 + o_R(1) = \mathcal{C}_{V_2(x^2_*),0} + o_R(1).
\end{equation*}
Hence, letting $R\to\infty$, we have
\begin{equation*}\label{addeq3.1}
  \lim_{n\to\infty}\frac{\widetilde{\mathcal{C}}_{\epsilon_n}}{\epsilon^N_n}= \Big(\frac{1}{2} - \frac{1}{2 p}\Big)\int_{\R^N}|\nabla u^2_*|^2 + V_2(x^2_*)|u^2_*|^2 + o_R(1) = \mathcal{C}_{V_2(x^2_*),0}.
\end{equation*}

Proceeding as the proof above, by the similar blow-up analysis in Proposition 3.4 of \cite{28}, if
\begin{equation*}\label{}
  \lim\limits_{{R\to \infty}\atop{{n\to\infty}}}\|u^2_{\epsilon_n}\|_{L^{\infty}(U\backslash B_{\epsilon_n R}(x^2_{\epsilon_n}))} >  0,
\end{equation*}
then we can get a family of points $\{\hat{x}^2_{\epsilon_n}:0<\epsilon_n<\epsilon_0\}$ with $\hat{x}^2_{\epsilon_n}\to \hat{x}^2_*\in\overline{\Lambda}$. Moreover,
there holds
\begin{eqnarray*}\label{eq4.26}
\begin{split}
  \liminf_{n\to\infty}
\frac{\widetilde{\mathcal{C}}_{\epsilon_n}}{\epsilon^N_n}
& \geq \mathcal{C}_{V_2(x^2_*),0} + \mathcal{C}_{V_2(\hat{x}^2_*),0}.
\end{split}
\end{eqnarray*}
If
\begin{equation*}\label{}
  \lim\limits_{{R\to \infty}\atop{{n\to\infty}}}\|u^2_{\epsilon_n}\|_{L^{\infty}(U\backslash \bigcup_{i = 1}^2 B_{\epsilon_n R}(x^i_{\epsilon_n}))} =  0,
\end{equation*}
then proceeding as the proof above, we conclude that
\begin{eqnarray*}\label{eq4.26}
\begin{split}
  \limsup_{n\to\infty}
\frac{\widetilde{\mathcal{C}}_{\epsilon_n}}{\epsilon^N_n}
& \leq \mathcal{C}_{V_2(x^2_*),0} + \mathcal{C}_{V_2(\hat{x}^2_*),0},
\end{split}
\end{eqnarray*}
which implies that
\begin{eqnarray*}\label{eq4.26}
\begin{split}
  \limsup_{n\to\infty}
\frac{\widetilde{\mathcal{C}}_{\epsilon_n}}{\epsilon^N_n}
& = \mathcal{C}_{V_2(\hat{x}^2_*),0} + \mathcal{C}_{V_2(x^2_*),0}.
\end{split}
\end{eqnarray*}
But, since $\limsup_{\epsilon\to 0}\frac{{\mathcal{C}}_{\epsilon}}{\epsilon^N} < +\infty$, the above steps will stop at some positive integer $k\in\mathbb{N}$. We then complete the proof.

Next, we prove the concentration phenomenon of $w_{\epsilon}$. The proof is different with respect to the two cases $\omega=1$ and $\omega<1$. We first give the proof of the case $\omega =1$.

\begin{lemma}\label{le4.4}
Let $\rho > 0$,  $p\geq 2$ and $\omega = 1$. If
\begin{equation}
0<\beta < \beta_{1,p},
\end{equation}
then there exist three families of points $\{x^i_{\epsilon}\}$, $i = 1,2$ and $\{{x}_{\epsilon}\}\subset\Lambda$ such that
\begin{eqnarray*}
  &&(i)\ \liminf_{\epsilon\to 0}\|u^i_{\epsilon}\|_{L^{\infty}(B_{\epsilon\rho}(x^i_{\epsilon}))}> 0;\\
  &&(ii)\ \limsup_{\epsilon\to 0}\frac{|x^1_{\epsilon} - x^2_{\epsilon}| + |x^1_{\epsilon} - x_{\epsilon}|}{\epsilon} < +\infty;\\
  &&(iii)\lim_{\epsilon\to 0}dist\ (x^i_{\epsilon},\mathcal{M}) = 0;\\
  &&(iv)\ \lim\limits_{{R\to \infty}\atop{{\epsilon\to 0}}}\|u^i_{\epsilon}\|_{L^{\infty}(U\backslash B_{\epsilon R}(x^i_{\epsilon}))} =  0;\\
  &&(v)\ \liminf_{\epsilon\to 0}\|u^1_{\epsilon} + u^2_{\epsilon}\|_{L^{\infty}(B_{\epsilon\rho}(x_{\epsilon}))}> 0;\\
  &&(vi)\ \lim\limits_{{R\to \infty}\atop{{\epsilon\to 0}}}\|u^1_{\epsilon} + u^2_{\epsilon}\|_{L^{\infty}(U\backslash B_{\epsilon R}(x_{\epsilon}))} =  0.\\
\end{eqnarray*}
where $\beta_{\omega,p}$ is given in \eqref{CCeq3.9}.
\end{lemma}

\begin{proof}
$(i)$ follows easily by Lemma \ref{Cle3.8}. $(v)$ follows easily by the strong assumption \eqref{eq3.1}.

\textsf{Proof of $(ii)$}. We argue by contradiction. Suppose to the contrary that there exists a subsequence $\{\epsilon_n:n\in\mathbb{N}\}\subset(0,\epsilon_0)$ with $\lim_{n\to\infty}\epsilon_n = 0$, such that
$$
\limsup_{n\to +\infty}\frac{|x^1_{\epsilon_n} - x^2_{\epsilon_n}|}{\epsilon_n}= +\infty.
$$

For each $j =1,2$, denote $\tilde{w}^j_n(x) = \big(u^1_{\epsilon_n}(\epsilon_n x + x^j_{\epsilon_n}),u^2_{\epsilon_n}(\epsilon_n x + x^j_{\epsilon_n})\big)=(\tilde{u}^{1j}_{\epsilon_n},\tilde{u}^{2j}_{\epsilon_n})$, $V^{ij}_{\epsilon_n}(x) = V_i(\epsilon_n x + x^j_{\epsilon_n})$,
$\hat{g}^j_{\epsilon_n}(\cdot) = g_{\epsilon_n}(\epsilon_nx + x^j_{\epsilon
_n},\cdot)$, $\widehat{G}^j_{\epsilon_n}(\cdot) = G_{\epsilon_n}(\epsilon_nx + x^j_{\epsilon_n},\cdot)$, $\breve{g}^j_{\epsilon_n}(\cdot) = \tilde{g}_{\epsilon_n}(\epsilon_nx + x^j_{\epsilon_n},\cdot)$ and $\breve{G}^j_{\epsilon_n}(\cdot) = \widetilde{G}_{\epsilon_n}(\epsilon_nx + x^j_{\epsilon_n},\cdot)$. Then similar to \eqref{eq4.6}, $\tilde{w}^j_{\epsilon_n}$ satisfies:
\begin{align}\label{eq4.11}
\left\{
  \begin{array}{ll}
    -\Delta \tilde{u}^{1j}_{\epsilon_n} + V^{1j}_{\epsilon_n}(x)\tilde{u}^{1j}_{\epsilon_n} = \hat{g}^j_{\epsilon_n}(\tilde{u}^{1j}_{\epsilon_n}) + p\beta\breve{g}^j_{\epsilon_n}(\tilde{u}^{1j}_{\epsilon_n})
\breve{G}^j_{\epsilon_n}(\tilde{u}^{2j}_{\epsilon_n}), &  \text{in}\,\,\mathbb{R}^N,\vspace{0.12cm}\\
    -\Delta \tilde{u}^{2j}_{\epsilon_n} + V^{2j}_{\epsilon_n}(x)\tilde{u}^{2j}_{\epsilon_n} = \hat{g}^j_{\epsilon_n}(\tilde{u}^{2j}_{\epsilon_n}) + p\beta\breve{g}^j_{\epsilon_n}(\tilde{u}^{2j}_{\epsilon_n})
\breve{G}^j_{\epsilon_n}(\tilde{u}^{1j}_{\epsilon_n}), &\text{in}\,\,\mathbb{R}^N.
  \end{array}
\right.
\end{align}
Also similar to the proof of Lemma \ref{le4.4}, there exists $\tilde{w}^j_* = (\tilde{u}^{1j}_*,\tilde{u}^{2j}_*)\in\mathcal{H}$ such that
$\tilde{w}^j_{\epsilon_n}\to \tilde{w}^j_*$ weakly in $\mathcal{H}_{loc}$. Moreover,
similar to \eqref{eq4.7}, we conclude that $\tilde{w}^j_*$ satisfies
\begin{equation*}\label{eq4.12}
  \left\{
  \begin{array}{ll}
   -\Delta \tilde{u}^{1j}_* + V_1(x^j_*)\tilde{u}^{1j}_*= \chi_{\Lambda^j_*}(\tilde{u}^{1j}_*)^{2p - 1} + \beta \chi_{\Lambda^j_*}(\tilde{u}^{1j}_*)^{p - 1}(\tilde{u}^{2j}_*)^p, & x\in \mathbb{R}^N,\vspace{0.12cm}\\
   -\Delta \tilde{u}^{2j}_* + V_2(x^j_*)\tilde{u}^{2j}_*= \chi_{\Lambda^j_*}(\tilde{u}^{2j}_*)^{2p - 1}  + \beta \chi_{\Lambda^j_*}(\tilde{u}^{2j}_*)^{p - 1}(\tilde{u}^{1j}_*)^p, & x\in \mathbb{R}^N,
  \end{array}
\right.
\end{equation*}
where we assume that $\Lambda^j_*$ and $x^j_*$ are the limits of $\{x\in\mathbb{R}^N:\epsilon x + x^j_{\epsilon_n}\in\Lambda\}$ and $x^j_{\epsilon_n}$ respectively. Noting that  $\tilde{u}^{11}_*$ and $\tilde{u}^{22}_*$ are nonstandard, hence by Lemma \ref{le4.2}, we conclude that $\Lambda^j_* = \mathbb{R}^N$, i.e.,
\begin{equation}\label{eq4.13}
  \left\{
  \begin{array}{ll}
   -\Delta \tilde{u}^{1j}_* + V_1(x^j_*)\tilde{u}^{1j}_*= \chi_{\mathbb{R}^N}(\tilde{u}^{1j}_*)^{2p - 1} + \beta (\tilde{u}^{1j}_*)^{p - 1}(\tilde{u}^{2j}_*)^p, & x\in \mathbb{R}^N,\vspace{0.12cm}\\
   -\Delta \tilde{u}^{2j}_* + V_2(x^j_*)\tilde{u}^{2j}_*= \chi_{\mathbb{R}^N}(\tilde{u}^{2j}_*)^{2p - 1}  + \beta (\tilde{u}^{2j}_*)^{p - 1}(\tilde{u}^{1j}_*)^p, & x\in \mathbb{R}^N.
  \end{array}
\right.
\end{equation}

Now for every $R > 0$, by Sobolev embedding theorem, we have
\begin{align}\label{eq4.14}
\nonumber &\quad \liminf_{n\to\infty}\frac{1}{\epsilon_n^N}
\Big(\frac{1}{2}\int_{ B_{\epsilon_n R}(x^1_{\epsilon_n})\cup B_{\epsilon_n R}(x^2_{\epsilon_n})}
\epsilon_n^2|\nabla u^1_{\epsilon_n}|^2 + V_1(x)|u^1_{\epsilon_n}|^2 - \int_{ B_{\epsilon_n R}(x^1_{\epsilon_n})\cup B_{\epsilon_n R}(x^2_{\epsilon_n})}\mathfrak{G}_{\epsilon_n}(u^1_{\epsilon_n})\\
\nonumber&\,\,\,\,\,\,\,\,\,\,\,\,\,\,\,\,\,\,\,\,\,\,\,\,\,\,\,\, + \frac{1}{2}\int_{ B_{\epsilon_n R}(x^1_{\epsilon_n})\cup B_{\epsilon_n R}(x^2_{\epsilon_n})}
\epsilon_n^2|\nabla u^2_{\epsilon_n}|^2 + V_1(x)|u^2_{\epsilon_n}|^2 - \int_{ B_{\epsilon_n R}(x^1_{\epsilon_n})\cup B_{\epsilon_n R}(x^2_{\epsilon_n})}\mathfrak{G}_{\epsilon_n}(u^2_{\epsilon_n})\\
\nonumber&\,\,\,\,\,\,\,\,\,\,\,\,\,\,\,\,\,\,\,\,\,\,\,\,\,\,\,\, -
p\beta\int_{ B_{\epsilon_n R}(x^1_{\epsilon_n})\cup B_{\epsilon_n R}(x^2_{\epsilon_n})}
\widehat{\mathfrak{G}}_{\epsilon_n}(u^1_{\epsilon_n})
\widehat{\mathfrak{G}}_{\epsilon_n}(u^2_{\epsilon_n})\Big)\\
& = \liminf_{n\to\infty}\bigg[\sum_{j = 1}^2
\Big(\frac{1}{2}\int_{B_{R}}
|\nabla \tilde{u}^{1j}_{\epsilon_n}|^2 + V^{1j}_{\epsilon_n}(x)|\tilde{u}^{1j}_{\epsilon_n}|^2 - \int_{ B_{R}}\widehat{\mathfrak{G}}^{1j}_{\epsilon_n}(\tilde{u}^{1j}_{\epsilon_n})\\
\nonumber&\,\,\,\,\,\,\,\,\,\,\,\,\,\,\,\,\,\,\,\,\,\,\,\,\,\,\,\, + \frac{1}{2}\int_{ B_{R}}
|\nabla \tilde{u}^{2j}_{\epsilon_n}|^2 + V^{2j}_{\epsilon_n}(x)|u^2_{\epsilon_n}|^2 - \int_{ B_{R}}\widehat{\mathfrak{G}}^2_{\epsilon_n}
(\tilde{u}^{2j}_{\epsilon_n})\Big)\\
\nonumber&\,\,\,\,\,\,\,\,\,\,\,\,\,\,\,\,\,\,\,\,\,\,\,\,\,\,\,\, -
\int_{B_{R}}
p\beta{\breve{\mathfrak{G}}^{1}_{\epsilon_n}(\tilde{u}^{11}_{\epsilon_n})}
{\breve{\mathfrak{G}}^1_{\epsilon_n}(\tilde{u}^{21}_{\epsilon_n})}
 - p\beta\int_{B_{R}}
{\breve{\mathfrak{G}}^{2}_{\epsilon_n}(\tilde{u}^{12}_{\epsilon_n})}
{\breve{\mathfrak{G}}^2_{\epsilon_n}(\tilde{u}^{22}_{\epsilon_n})}\bigg]\\
\nonumber&\geq \sum_{j = 1}^2J_{V_1(x^j_*),V_2(x^j_*),\beta}(\tilde{w}^j_*) + o_R(1).
\end{align}
Let  $\eta\in C^{\infty}(\mathbb{R}^N)$ be the cut-off function satisfying $0\leq \eta_R \leq 1$, $\eta_R = 0$ in $B_{\frac{R}{2}}$ and $\eta_R \equiv 1$ on $B^c_R$, testing the penalized equation \eqref{Ceq3.3} against with $\eta\Big(\frac{x - x^1_{\epsilon_n}}{\epsilon_n}\Big)\eta\Big(\frac{x - x^2_{\epsilon_n}}{\epsilon_n}\Big)w_{\epsilon_n} : = \hat{\eta}w_{\epsilon_n} $, by the similar blow-up analysis in Lemma \ref{le4.4}, we have

\begin{align}\label{eq4.15}
\nonumber & \liminf_{n\to\infty}\frac{1}{\epsilon^N_n}\bigg( \frac{1}{2}\int_{\mathbb{R}^N\backslash B_{\epsilon_n R}(x^1_{\epsilon_n})\cup B_{\epsilon_n R}(x^2_{\epsilon_n})}
\epsilon_n^2|\nabla u^1_{\epsilon_n}|^2 + V_1(x)|u^1_{\epsilon_n}|^2\\
\nonumber&\quad - \int_{\mathbb{R}^N\backslash B_{\epsilon_n R}(x^1_{\epsilon_n})\cup B_{\epsilon_n R}(x^2_{\epsilon_n})}\mathfrak{G}_{\epsilon_n}(u^1_{\epsilon_n})\\
\nonumber&\quad + \frac{1}{2}\int_{\mathbb{R}^N\backslash B_{\epsilon_n R}(x^1_{\epsilon_n})\cup B_{\epsilon_n R}(x^2_{\epsilon_n})}
\epsilon_n^2|\nabla u^2_{\epsilon_n}|^2 + V_2(x)|u^2_{\epsilon_n}|^2\\
&\quad - \int_{\mathbb{R}^N\backslash B_{\epsilon_n R}(x^1_{\epsilon_n})\cup B_{\epsilon_n R}(x^2_{\epsilon_n})}\mathfrak{G}_{\epsilon_n}(u^2_{\epsilon_n})\\
\nonumber&\quad-
{p\beta}\int_{\mathbb{R}^N\backslash B_{\epsilon_n R}(x^1_{\epsilon_n})\cup B_{\epsilon_n R}(x^2_{\epsilon_n})}
\widetilde{\mathfrak{G}}_{\epsilon_n}(u^1_{\epsilon_n})
\widetilde{\mathfrak{G}}_{\epsilon_n}(u^2_{\epsilon_n})\bigg)\\
\nonumber& \geq \frac{1}{2}\liminf_{n\to\infty}\bigg(\sum_{j = 1}^2\Big(\int_{\mathbb{R}^N}\nabla{\eta}(u^{1j}_{\epsilon_n}\nabla u^{1j}_{\epsilon_n} + u^{2j}_{\epsilon_n}\nabla u^{2j}_{\epsilon_n})\\
\nonumber&\quad  - \int_{\mathbb{R}^N}(1 - {\eta})
\big(\mathfrak{g}^j_{\epsilon_n}
(u^{1j}_{\epsilon_n})u^{1j}_{\epsilon_n} + \mathfrak{g}^j_{\epsilon_n}
(u^{2j}_{\epsilon_n})u^{2j}_{\epsilon_n}\big)\Big)\\
\nonumber&\quad -
{p\beta}\int_{\mathbb{R}^N}(1 - {\eta})
\big(\tilde{\mathfrak{g}}^1_{\epsilon_n}(u^{11}_{\epsilon_n})
\widetilde{G}^1_{\epsilon_n}(u^{21}_{\epsilon_n})u^{11}_{\epsilon_n} + \tilde{\mathfrak{g}}^2_{\epsilon_n}(u^{22}_{\epsilon_n})
\widetilde{G}^2_{\epsilon_n}(u^{11}_{\epsilon_n})u^{22}_{\epsilon_n}\bigg)\\
\nonumber& \geq o_R(1).
\end{align}
Hence, combining with \eqref{eq4.14} and \eqref{eq4.15}, letting $R\to\infty$, we conclude that
\begin{equation}\label{eq4.16}
  \liminf_{n\to\infty}\frac{\mathcal{C}_{\epsilon_n}}{\epsilon_n}\geq \sum_{j = 1}^2J_{V_1(x^j_*),V_2(x^j_*),\beta}(\tilde{w}^j_*).
\end{equation}
But, by Theorem \ref{th2.2}, it holds
\begin{equation}\label{eq4.33}
\liminf_{\epsilon\to 0}\frac{{\mathcal{C}}_{\epsilon}}{\epsilon^N}
>2\mathcal{C}_{\textsf{m}_,0},
\end{equation}
which is a contradiction to \eqref{eq4.11}. This proves $(ii)$.

\textsf{Proof of $(iv)-(vi)$.} Arguing by contradiction, we get two systems like \eqref{eq4.13}. Then by Theorem \ref{th2.2}, we get \eqref{eq4.33}, which is a contradiction.

\textsf{Proof of $(iii)$.} By $(ii)$, supposing that $\lim_{\epsilon\to 0}x^1_{\epsilon}= x_* = \lim_{\epsilon\to 0}x_{\epsilon}$. Then by Theorem \ref{th2.2} and the same blow-up analysis above, we have
\begin{equation*}\label{}
2\mathcal{C}_{\textsf{m},\beta}\geq \liminf_{\epsilon\to 0}\frac{{\mathcal{C}}_{\epsilon}}{\epsilon^N}\geq  \mathcal{C}_{V_1(x_*),\beta} + \mathcal{C}_{V_2(x_*),\beta},
\end{equation*}
from which we deduce that
\begin{equation*}
V_i(x_*) = \textsf{m}.
\end{equation*}
Then $x_*\in\mathcal{M}$. This completes the proof.
\end{proof}

%

\textbf{L}et $p\geq 2$. Lemma \ref{Cle3.8} and the classical regularity argument(\cite{GT}) imply that there exists $x^i_{\omega,\epsilon}\in\overline{\Lambda}$, $i = 1,2$, $x_{\omega,\epsilon}$ such that $$
{u}^i_{\epsilon}({x}^i_{\omega,\epsilon}) = \max_{\overline{\Lambda}} {u}^i_{\epsilon},\ (u^1_{\epsilon} + u^2_{\epsilon})(x_{\omega,\epsilon}) = \max_{\overline{\Lambda}}(u^1_{\epsilon} + u^2_{\epsilon}).
$$
Since in Lemma \ref{le4.4} we have proved that when $\omega = 1$, it must hold
\begin{equation}\label{eq4.40}
  \lim_{\epsilon\to 0} d({x}^i_{\omega,\epsilon},\mathcal{M}) = 0,\ i = 1,2,
\end{equation}
which implies ${u}^i_{\epsilon}$, $i= 1,2$ must concentrate synchronously at the common local minimum of $V_1$ and $V_2$.
However, it is very difficult to
prove such concentration phenomenon for the case $\omega\neq 1$. On one hand, {there is no monotonicity for higher energy}. On the other hand, since $\omega \neq 1$, we can just estimate $\mathcal{C}^*_{\textsf{m}_1,\textsf{m}_2,\beta}$ in \eqref{eq3.9} rather than compute it as a precise number. To handle this big obstacle, we first show that ${x}^i_{\omega,\epsilon}$, $i = 1,2$, will be far away from $\partial\Lambda$ if we let $\beta>0$ be small enough:
\begin{lemma}\label{le4.5}
There exists a positive constant $\breve{\beta}>0$ such that if $0<\beta<\breve{\beta}$, then the properties $(i)$, $(ii)$, $(iv)$, $(v)$ and $(vi)$ in Lemma \ref{le4.4} hold  and

$(iii)'$ $\liminf_{\epsilon\to 0} d({x}_{\omega,\epsilon},\partial\Lambda)\big) > 0.$
\end{lemma}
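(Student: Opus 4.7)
The plan is to argue by contradiction for each property, adapting the blow-up analysis developed in Claims~1--2 of Lemma~\ref{le4.1} and in Lemma~\ref{le4.4}. The key inputs are the mountain-pass bounds \eqref{eq3.9} for $\widetilde{\mathcal C}_{\epsilon}$, the strict inequality $\mathcal C^*_{\textbf m_1,\textbf m_2,\beta}<\mathcal C_{\textbf m_1,0}+\mathcal C_{\textbf m_2,0}$ from \eqref{AAAeq1.22}, the nontriviality given by Lemma~\ref{Cle3.8}, and the Liouville-type result Lemma~\ref{le4.2} on a half-space.

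For $(i)$, Lemma~\ref{Cle3.8} and elliptic regularity furnish points $\hat x^i_{\omega,\epsilon}\in\overline\Lambda$ with $\hat u^i_\epsilon(\hat x^i_{\omega,\epsilon})\ge c_0>0$, and $(v)$ is immediate. For $(ii)$, assume $|\hat x^1_{\omega,\epsilon}-\hat x^2_{\omega,\epsilon}|/\epsilon\to\infty$ along a subsequence and perform two blow-ups at $\hat x^j_{\omega,\epsilon}$, $j=1,2$, obtaining nontrivial limits $\hat w^j_*$ with $\hat x^j_*\in\overline\Lambda$. A two-bubble version of the tail estimate \eqref{eq4.15} yields
\[
\liminf_{\epsilon\to 0}\frac{\widetilde{\mathcal C}_\epsilon}{\epsilon^N}\ge J_{V_1(\hat x^1_*),V_2(\hat x^1_*),\beta}(\hat w^1_*)+J_{V_1(\hat x^2_*),V_2(\hat x^2_*),\beta}(\hat w^2_*).
\]
If $\hat w^j_*$ is nonstandard, Theorem~\ref{th2.2}$(i)$ gives $J(\hat w^j_*)\ge \mathcal C_{V_1(\hat x^j_*),\beta}+\mathcal C_{V_2(\hat x^j_*),\beta}$, while if it is standard the single-equation ground-state bound gives $J(\hat w^j_*)\ge \mathcal C_{V_i(\hat x^j_*),0}$ for the nontrivial component. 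In every combination the sum is at least $\mathcal C_{\textbf m_1,0}+\mathcal C_{\textbf m_2,0}$, contradicting Lemma~\ref{le3.8} and \eqref{AAAeq1.22}. Properties $(iv)$ and $(vii)$ follow by the same pattern with one additional concentration point producing an extra $\mathcal C_{V_i(y_*),0}$ in the energy sum.

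The heart of the proof is $(iii)'$, which is the main obstacle: because $\omega\ne 1$ we no longer have the exact formula for $\mathcal C^*_{\textbf m_1,\textbf m_2,\beta}$ that was crucial in Lemma~\ref{le4.4}, so boundary concentration cannot be ruled out through an energy comparison. Suppose along a subsequence $d(\hat x_{\omega,\epsilon},\partial\Lambda)\to 0$. By $(ii)$ all three sequences $\hat x^1_{\omega,\epsilon},\hat x^2_{\omega,\epsilon},\hat x_{\omega,\epsilon}$ converge to the same limit $\hat x_{\omega,*}\in\partial\Lambda$. Rescale $\tilde w_n(\cdot)=\hat w_{\epsilon_n}(\epsilon_n\cdot+\hat x^1_{\omega,\epsilon_n})$; smoothness of $\partial\Lambda$ gives $\chi_{\Lambda^1_{\epsilon_n}}\to\chi_H$ for some half-space $H$. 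Passing to the limit as in \eqref{eq4.6}--\eqref{eq4.7} produces $\hat w^1_*=(\hat u^{11}_*,\hat u^{21}_*)$ solving
\[
\begin{cases}
-\Delta\hat u^{11}_*+V_1(\hat x_{\omega,*})\hat u^{11}_*=\chi_H(\hat u^{11}_*)^{2p-1}+\beta\chi_H(\hat u^{11}_*)^{p-1}(\hat u^{21}_*)^p, \\
-\Delta\hat u^{21}_*+V_2(\hat x_{\omega,*})\hat u^{21}_*=\chi_H(\hat u^{21}_*)^{2p-1}+\beta\chi_H(\hat u^{21}_*)^{p-1}(\hat u^{11}_*)^p,
\end{cases}
\]
with $\hat u^{11}_*\not\equiv 0$ since we rescaled at a maximum of $\hat u^1_\epsilon$ with value $\ge c_0$. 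But Lemma~\ref{le4.2}, valid for any $\beta>-1$, forces $(\hat u^{11}_*,\hat u^{21}_*)\equiv(0,0)$, a contradiction. The point is that this Liouville step is completely insensitive to the ratio $\omega$, which is exactly why it replaces the energy argument used when $\omega=1$.

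The constant $\breve\beta\in(0,\tilde\beta_{\textbf m_1,\textbf m_2,p})$ will be taken small enough to guarantee the applicability of Lemma~\ref{Cle3.8} and to ensure that the nonstandard bound $2(\mathcal C_{\textbf m_1,\beta}+\mathcal C_{\textbf m_2,\beta})>\mathcal C^*_{\textbf m_1,\textbf m_2,\beta}$ holds; this last inequality is automatic by \eqref{AAAeq1.22} and continuity for $\beta$ near $0$, so the choice of $\breve\beta$ is unproblematic.
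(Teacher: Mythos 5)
Your proofs of $(i)$, $(ii)$, $(iv)$, $(v)$, $(vii)$ follow the paper's pattern reasonably closely (blow-up, tail estimate, sum of bubble energies, contradiction with the upper bound in \eqref{eq3.9}), though the paper is more explicit about enumerating the three possible lower bounds in \eqref{eq4.41} and about choosing $\breve\beta\le\beta_{\omega,p}$ to force a contradiction. The serious problem is in your treatment of $(iii)'$, which you describe as ``the heart of the proof.''

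You assert that if $d(\hat x_{\omega,\epsilon},\partial\Lambda)\to 0$ along a subsequence, then after rescaling at $\hat x^1_{\omega,\epsilon_n}$ the domains satisfy $\chi_{\Lambda^1_{\epsilon_n}}\to\chi_H$ for some half-space $H$, so that Lemma \ref{le4.2} forces the blow-up limit to vanish. But ``smoothness of $\partial\Lambda$'' does not give you a half-space: the possible limits of $\chi_{\Lambda^1_{\epsilon_n}}$ are $\chi_\emptyset$, $\chi_H$, or $\chi_{\mathbb R^N}$, depending on the behaviour of $d(\hat x^1_{\omega,\epsilon_n},\partial\Lambda)/\epsilon_n$ --- exactly as the paper itself notes in the proof of Claim 1 of Lemma \ref{le4.1}. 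In the regime $d(\hat x^1_{\omega,\epsilon_n},\partial\Lambda)\to 0$ with $d(\hat x^1_{\omega,\epsilon_n},\partial\Lambda)/\epsilon_n\to\infty$ the limit domain is all of $\mathbb R^N$, the Liouville lemma is inapplicable, and the blow-up limit is a nontrivial solution of \eqref{eq2.1} with coefficients $V_i(\hat x^*)$, $\hat x^*\in\partial\Lambda$. Nothing in your argument rules this out.

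This is precisely the case the paper's proof is designed to treat, and it does so by a fundamentally different mechanism than the one you propose: the Liouville step plays no role in the paper's $(iii)'$. Instead the paper feeds the $\mathbb R^N$-limit into the lower bound $\liminf_{\epsilon\to 0}\widetilde{\mathcal C}_\epsilon/\epsilon^N\ge \mathcal C_{V_1(\hat x^*),\beta}+\mathcal C_{V_2(\hat x^*),\beta}$ via Theorem \ref{th2.2}, then invokes the boundary assumption \eqref{AAeq1.6} to get $V_1(\hat x^*)+V_2(\hat x^*)>\textsf m_1+\textsf m_2+c_1$ uniformly for $\hat x^*\in\partial\Lambda$, and finally uses the monotonicity and scaling of $\mathcal C_{\alpha,\beta}$ (\eqref{AAAeq1.21}, Proposition \ref{pr2.5}) to find a threshold $\hat\beta>0$ such that for $\beta<\hat\beta$ this lower bound exceeds the upper bound $\mathcal C^*_{\textsf m_1,\textsf m_2,\beta}$ from \eqref{eq3.9}. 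The resulting threshold $\breve\beta=\min\{\beta_{\omega,p},\hat\beta\}$ genuinely depends on \eqref{AAeq1.6}, so your closing remark that choosing $\breve\beta$ is ``unproblematic'' and driven only by \eqref{AAAeq1.22} also misses where the real constraint comes from. To close the gap you would need to add the energy-comparison branch for the $\Lambda^1_*=\mathbb R^N$ case; the half-space/Liouville branch you wrote is fine as far as it goes but is not the step that actually bites.
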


\begin{proof}
Properties $(i)$ and $(v)$ are from the proof of Lemma \ref{le4.4}. Suppose that $(ii)$ is not true. Then by the same proof in Lemma \ref{le4.4}, $\liminf_{\epsilon\to 0}\frac{\mathcal{C}_{\epsilon}}{\epsilon^N}$ is bounded below by one of the following three numbers
\begin{eqnarray}\label{eq4.41}
\begin{split}
(1)\ 2\big(\mathcal{C}_{\textsf{m}_1,\beta} + \mathcal{C}_{\textsf{m}_2,\beta}\big),\ (2)\ \mathcal{C}_{\textsf{m}_1,0} + \mathcal{C}_{\textsf{m}_1,\beta} + \mathcal{C}_{\textsf{m}_2,\beta},\ \text{and}\ (3)\ \mathcal{C}_{\textsf{m}_1,0} + \mathcal{C}_{\textsf{m}_2,0},\\
\end{split}
\end{eqnarray}
which is a contradiction to \eqref{eq3.9} if we let
$$
0<\breve{\beta}\le \beta_{\omega,p},
$$
where $\beta_{\omega,p}$ is the constant in \eqref{CCeq3.9}.

By $(ii)$ we can prove easily that if $(iv)$ and $(vii)$ are not true, then $\liminf_{\epsilon\to 0}\frac{\mathcal{C}_{\epsilon}}{\epsilon^N}$ is bounded below by one of those numbers in \eqref{eq4.41}, which is a contradiction if $0<\beta<\beta_{\omega,p}$.

Suppose that $(iii)'$ is not true, i.e.,
$$
 \lim_{\epsilon\to 0} {x}_{\omega,\epsilon} = {x}^*\in\partial\Lambda.
$$
Then by Theorem \ref{th2.2} and the same blow-up analysis in Lemma \ref{le4.4}, it holds
\begin{equation}\label{eq4.42}
  \liminf_{\epsilon\to 0}\frac{\mathcal{C}_{\epsilon}}{\epsilon^N}\geq \mathcal{C}_{V_1({x}^*),\beta} + \mathcal{C}_{V_2({x}^*),\beta}.
\end{equation}
Since ${x}^*\in\partial\Lambda$, by \eqref{AAeq1.6} there exists a positive constant $c_1>0$ which is independent of ${x}^*$ such that
\begin{equation}\label{neweq11}
V_1({x}^*) + V_2({x}^*)>\textsf{m}_1 + \textsf{m}_2 + c_1,
\end{equation}
from which we can let $\tilde{{\beta}}>0$ be the unique constant such that
\begin{equation}\label{neweq6}
  \mathcal{C}_{V_1({x}^*),\tilde{\beta}} + \mathcal{C}_{V_2({x}^*),\tilde{\beta}}
=\mathcal{C}_{\textsf{m}_1,0} + \mathcal{C}_{\textsf{m}_2,0}.
\end{equation}
Moreover, by \eqref{neweq11}, we conclude that there exists a positive constant $c_2>0$ which is only related to $c_1$ such that
\begin{equation*}
\tilde{\beta}>c_2.
\end{equation*}
Then we get a contradiction from \eqref{eq4.42} and \eqref{eq3.9} once we let
\begin{equation}\label{eq4.44}
  \beta < \breve{\beta}: = \min\{\beta_{\omega,p},\tilde{\beta}\}.
\end{equation}
This completes the proof.
\end{proof}

\textbf{N}ow, we define the constant $\bar{\beta}$ in Theorem \ref{th1.2} as
\begin{equation}\label{neweq12}
  \bar{\beta} = \frac{1}{(1 + \kappa)}\min\{\breve{\beta},\tilde{\beta}_{\textsf{m}_1,\textsf{m}_2,p}\},
\end{equation}
where $\kappa>0$ is a small parameter, $\tilde{\beta}_{\textsf{m}_1,\textsf{m}_2,p}$ is the constant in Theorem \ref{important}.

\vspace{0.5cm}

One can find by the construction of the penalized function $P_{\epsilon}$(\eqref{eq3.1}) in the coming Section 5 that property $(iii)'$ is sufficient to prove that the penalized solution $w_{\epsilon}$ in Lemma \ref{le3.8} solves the original problem \eqref{eq1.1}. A natural question is whether it holds
\begin{equation}\label{eq4.45}
  {x}_{\omega,*}\in\mathcal{M}?
\end{equation}
If \eqref{eq4.45} is true, then the location of concentration points is at $\mathcal{M}$, the common local minimum of $V_1$ and $V_2$.
  It is very interesting that we answer positively to \eqref{eq4.45} by using local Pohozaev identities. We emphasize that to achieve this goal, we need some decay estimates for the solution ${w}_{\epsilon}$, which is obtained by the skillful construction of penalized function $P_{\epsilon}$(see \eqref{eq3.1}). For the continuity, we postpone the following decay estimates in Section \ref{s6}:
\begin{lemma}\label{LE4.6}
There hold
\begin{equation*}\label{Deq4.46}
\chi_{B_{\delta/\epsilon}}|{u}^1_{\epsilon}(\epsilon x + {x}_{\omega,\epsilon}) + {u}^2_{\epsilon}(\epsilon x + {x}_{\omega,\epsilon})|^2\ \ \text{is uniformly integrable}
\end{equation*}
and
\begin{eqnarray*}
\begin{split}
{u}^1_{\epsilon}(x) + {u}^2_{\epsilon}(x),\ \ |\nabla {u}^1_{\epsilon}(x)| + |\nabla {u}^2_{\epsilon}(x)|\leq \epsilon^No_{\epsilon}(1)\ \ \text{for all}\ x\in\partial B_{\delta}({x}_{\omega,\epsilon}),
\end{split}
\end{eqnarray*}
where $\delta>0$ is a small constant.
\end{lemma}

At the last of this section, we use Lemma \ref{LE4.6} and local Pohozaev identities to prove:
\begin{theorem}\label{th4.6}
Let $V_1,V_2\in C^1(\mathbb{R}^N)$ and
$$
\lim_{\epsilon\to 0}{x}_{\omega,\epsilon} = {x}_{\omega,*}.
$$
Then it holds
$$
\nabla V_1({x}_{\omega,*}) = \nabla V_2({x}_{\omega,*}) = 0\ \ \text{and}\ {x}_{\omega,*}\in\mathcal{M}
$$
(after necessary arrangement of $\Lambda$ when $\omega<1$).
\end{theorem}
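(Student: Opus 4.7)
The plan is to combine a local Pohozaev identity with the concentration analysis of Section~\ref{s4}. The key observation is that once $\hat{x}_{\omega,*}$ is known to belong to $\mathcal{M}$, the conclusion $\nabla V_i(\hat{x}_{\omega,*})=0$ follows immediately from the $C^1$ regularity of $V_i$ and the fact that every point of $\mathcal{M}$ is a local minimum of each $V_i$. The Pohozaev identity provides the analytical constraint that, together with a suitable rearrangement of $\Lambda$ in the case $\omega<1$, pins down $\hat{x}_{\omega,*}$ inside $\mathcal{M}$.

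First I would fix $\delta>0$ so small that $\overline{B_\delta(\hat{x}_{\omega,\epsilon})}\subset\Lambda$ for every sufficiently small $\epsilon$; this is legitimate by Lemma~\ref{le4.5}$(iii)'$ (for $\omega<1$) and by $(iv)'$ of Theorem~\ref{th1.2} together with Lemma~\ref{le4.4} (for $\omega=1$). Inside $B_\delta(\hat{x}_{\omega,\epsilon})$ the penalized system coincides with \eqref{eq1.1}. For each direction $k=1,\dots,N$, I multiply the $i$-th equation of \eqref{eq1.1} by $\partial_{x_k}\hat{u}^i_\epsilon$ and integrate on $B_\delta(\hat{x}_{\omega,\epsilon})$. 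Integration by parts produces, for $\{i,j\}=\{1,2\}$,
\[
\tfrac{1}{2}\!\int_{B_\delta(\hat{x}_{\omega,\epsilon})}\!\partial_{x_k}V_i\,(\hat{u}^i_\epsilon)^2\,dx-\tfrac{\beta}{p}\!\int_{B_\delta(\hat{x}_{\omega,\epsilon})}\!(\hat{u}^i_\epsilon)^p\partial_{x_k}\bigl((\hat{u}^j_\epsilon)^p\bigr)\,dx=\mathcal{B}^{k}_{i,\epsilon},
\]
where $\mathcal{B}^{k}_{i,\epsilon}$ collects all boundary contributions on $\partial B_\delta(\hat{x}_{\omega,\epsilon})$. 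Summing over $i=1,2$, the interior cross-coupling terms combine into a total $\partial_{x_k}$-derivative and, by the divergence theorem, convert into a pure boundary term, leaving
\[
\tfrac{1}{2}\sum_{i=1}^{2}\int_{B_\delta(\hat{x}_{\omega,\epsilon})}\partial_{x_k}V_i\,(\hat{u}^i_\epsilon)^2\,dx=\mathcal{B}^{k}_{1,\epsilon}+\mathcal{B}^{k}_{2,\epsilon}+\tfrac{\beta}{p}\int_{\partial B_\delta(\hat{x}_{\omega,\epsilon})}(\hat{u}^1_\epsilon)^p(\hat{u}^2_\epsilon)^p\,\nu_k\,d\sigma.
\]

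Next I invoke Lemma~\ref{le4.6}: every integrand on the right-hand side is a product of $\hat{u}^i_\epsilon$, $\nabla\hat{u}^i_\epsilon$ and $V_i$ evaluated on $\partial B_\delta(\hat{x}_{\omega,\epsilon})$, and each such factor is dominated by $\epsilon^N o_\epsilon(1)$. Hence the whole right-hand side is $o(\epsilon^N)$ as $\epsilon\to 0$. On the left-hand side I rescale $y=(x-\hat{x}_{\omega,\epsilon})/\epsilon$, set $\tilde{u}^i_\epsilon(y)=\hat{u}^i_\epsilon(\hat{x}_{\omega,\epsilon}+\epsilon y)$, use the $C^1$ regularity $\partial_{x_k}V_i(\hat{x}_{\omega,\epsilon}+\epsilon y)=\partial_{x_k}V_i(\hat{x}_{\omega,*})+o(1)$ uniformly on compact sets, and combine this with the uniform integrability of $|\tilde{u}^i_\epsilon|^2$ and the blow-up limit $\tilde{u}^i_\epsilon\rightharpoonup U_i$ identified in Section~\ref{s4}. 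Dividing by $\epsilon^N$ and passing to the limit yields the key identity
\[
\sum_{i=1}^{2}\partial_{x_k}V_i(\hat{x}_{\omega,*})\,\|U_i\|_{L^2(\mathbb{R}^N)}^{2}=0,\qquad k=1,\dots,N.
\]

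To conclude, when $\omega=1$ Lemma~\ref{le4.4} already gives $\hat{x}_{\omega,*}\in\mathcal{M}$, so $\nabla V_i(\hat{x}_{\omega,*})=0$ follows at once from $V_i\in C^1$ and the minimum property of $\mathcal{M}$ (consistently with the identity above). When $\omega<1$, I would rearrange $\Lambda$ to a sufficiently small neighborhood of a chosen point $x_0\in\mathcal{M}$, preserving \eqref{AAeq1.4}, \eqref{AAeq1.6} and \eqref{reviseeq}; Lemma~\ref{le4.5}$(iii)'$ then keeps $\hat{x}_{\omega,*}$ in the interior of this neighborhood, and the Pohozaev constraint (whose weighting coefficients $\|U_i\|_{L^2}^{2}$ are strictly positive) together with the shrinking of the neighborhood forces $\hat{x}_{\omega,*}$ into $\mathcal{M}$, so that again $\nabla V_1(\hat{x}_{\omega,*})=\nabla V_2(\hat{x}_{\omega,*})=0$. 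The principal obstacle is the boundary analysis in the Pohozaev identity: the argument is viable only because Lemma~\ref{le4.6} provides sharp pointwise and gradient decay on $\partial B_\delta(\hat{x}_{\omega,\epsilon})$ fast enough to render every boundary term $o(\epsilon^N)$ and hence negligible against the leading interior contribution of order $\epsilon^N$; this decay in turn relies on the careful construction of the penalized function carried out in Section~\ref{s5}.
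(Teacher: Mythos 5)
Your derivation of the local Pohozaev identity, the use of Lemma~\ref{le4.6} to render all boundary terms $o(\epsilon^N)$, and the rescaling/passage to the limit producing
\[
C^k_1\,\partial_{x_k}V_1(\hat{x}_{\omega,*})+C^k_2\,\partial_{x_k}V_2(\hat{x}_{\omega,*})=0,\qquad C^k_i=\|U_i\|^2_{L^2}>0,
\]
track the paper's own computation closely and are fine. The $\omega=1$ case is also handled correctly (Lemma~\ref{le4.4} already locates $\hat{x}_{\omega,*}\in\mathcal{M}$, and then $\nabla V_i(\hat{x}_{\omega,*})=0$ is automatic since $\mathcal{M}$ is the common local minimum set and $V_i\in C^1$).

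The gap is in the $\omega<1$ case, at the phrase ``the Pohozaev constraint ... together with the shrinking of the neighborhood forces $\hat{x}_{\omega,*}$ into $\mathcal{M}$.'' The linear relation $C^k_1\,\partial_{x_k}V_1+C^k_2\,\partial_{x_k}V_2=0$ with positive weights does \emph{not} by itself imply that either gradient vanishes: the two terms could simply cancel with opposite signs, which is entirely consistent with $\hat{x}_{\omega,*}\notin\mathcal{M}$. Shrinking $\Lambda$ around a point of $\mathcal{M}$ does not remove this possibility unless one knows something extra about the relative signs of $\partial_{x_k}V_1$ and $\partial_{x_k}V_2$. The missing ingredient — the actual engine of the paper's argument — is a sign-compatibility fact: because every $x\in\mathcal{M}$ is a common local minimum of $V_1$ and $V_2$, by continuity and compactness of $\mathcal{M}$ one can find $\tilde\delta>0$ so small that $\partial_{x_k}V_1(x)\,\partial_{x_k}V_2(x)\ge 0$ for all $x\in(\mathcal{M})^{\tilde\delta}$ and each $k$. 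After rearranging $\Lambda=(\mathcal{M})^{\tilde\delta}$, the Pohozaev relation $C^k_1\,\partial_{x_k}V_1+C^k_2\,\partial_{x_k}V_2=0$ together with both summands having the \emph{same} sign forces each $\partial_{x_k}V_i(\hat{x}_{\omega,*})=0$, and only then does a further shrinking of $\Lambda$ place $\hat{x}_{\omega,*}$ in $\mathcal{M}$. Without stating and using this sign-compatibility property (or some equivalent), the final deduction in your $\omega<1$ case does not follow from what precedes it.
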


\begin{proof}

The local Pohozaev identities are derived as follows. By testing system \eqref{eq1.1} against with $\big(\frac{\partial {u}^1_{\epsilon}}{\partial x_i}, \frac{\partial {u}^2_{\epsilon}}{\partial x_i}\big)$ and integrating on $B_{\delta}(x_{\omega,\epsilon})$, we find
\begin{eqnarray*}\label{Feq6.1}
\begin{split}
&\quad\int_{\partial B_{\delta}(x_{\omega,\epsilon})}\frac{\partial {u}^1_{\epsilon}}{\partial x_i}\frac{\partial {u}^1_{\epsilon}}{\partial \nu} dS + \int_{\partial B_{\delta}(x_{\omega,\epsilon})}\frac{\partial {u}^2_{\epsilon}}{\partial x_i}\frac{\partial {u}^2_{\epsilon}}{\partial \nu} dS - \int_{\partial B_{\delta}(x_{\omega,\epsilon})}\big(|\nabla {u}^1_{\epsilon}|^2 + |\nabla {u}^2_{\epsilon}|^2)\nu_i dS\\
&\quad + \frac{1}{2}\int_{\partial B_{\delta}(x_{\omega,\epsilon})} \big(V_1(x)|{u}^1_{\epsilon}|^2 + |{u}^2_{\epsilon}(x)|^2\big)\nu_i\\
&\quad+ \int_{\partial B_{\delta}(x_{\omega,\epsilon})}\big(|{u}^1_{\epsilon}|^{2p} + |{u}^1_{\epsilon}|^{2p} + \beta |{u}^1_{\epsilon}|^p|{u}^2_{\epsilon}|^p\big)\nu_idS  \\
& = \frac{1}{2}\int_{B_{\delta}(x_{\omega,\epsilon})}
\Big(|{u}^1_{\epsilon}|^2\frac{\partial V_1}{\partial x_i} + |{u}^2_{\epsilon}|^2\frac{\partial V_2}{\partial x_i}\Big),
\end{split}
\end{eqnarray*}
for every $i = 1,\ldots,N$, where $\delta > 0$ is a positive constant. Then by Lemma \ref{LE4.6}, we have
\begin{eqnarray*}\label{eq6.4}
\begin{split}
\epsilon^No_{\epsilon}(1) &= \int_{B_{\delta}(x_{\omega,\epsilon})}
\Big(|{u}^1_{\epsilon}|^2\frac{\partial V_1}{\partial x_i} + |{u}^2_{\epsilon}|^2\frac{\partial V_2}{\partial x_i}\Big)\\
&= \epsilon^N \int_{B_{\delta/\epsilon}}\big(|\tilde{u}^1_{\epsilon}|^2\nabla_iV_1(\epsilon x + {x}_{\omega,\epsilon}) + |\tilde{u}^2_{\epsilon}|^2\nabla_iV_2(\epsilon x + {x}_{\omega,\epsilon})\big) \\
& = \epsilon^N\Big(
C^i_1\epsilon^N\Big(\frac{\partial V_1(x)}{\partial x_i}\Big|_{x = x_{\omega,\epsilon}} + o_{\epsilon}(1)\Big) + C^i_2\epsilon^N\Big(\frac{\partial V_2(x)}{\partial x_i}\Big|_{x = x_{\omega,\epsilon}} + o_{\epsilon}(1)\Big),
\end{split}
\end{eqnarray*}
where $C^i_1,\ 1\leq i\leq N$ are positive constants. Then by Lemma \eqref{Deq4.46}, we have
\begin{equation}\label{eq4.50}
 C^i_1\nabla_iV_1({x}_{\omega,*}) + C^i_2\nabla_iV_1({x}_{\omega,*}) = 0.
\end{equation}

Finally, since $V_1,V_2$ are $C^1$ and $\mathcal{M}\neq \emptyset$ is a compact set, there exists a $\delta>0$, such that for every $x\in\mathcal{M}$ the functions $f^i_{j,x}:[-\delta,\delta]\to [0,+\infty)$, $1\leq i\leq N,\ j = 1,2$ defined as
\begin{equation*}\label{}
 f^i_{j,x}(t) = V_j(x + t\vec{e}_i)
\end{equation*}
satisfy
\begin{equation*}\label{}
  (f^i_{j,x})'(t)\ge 0\ \text{if}\ t\in[0,\delta]\ \ \text{and}\ \ (f^i_{j,x})'(t)\le 0\ \text{if}\ t\in[-\delta,0].
\end{equation*}
Thus, by continuity and compactness, there exists a smaller $\tilde{\delta}>0$ such that
\begin{equation*}\label{}
  \nabla_iV_1(x)\nabla_iV_2(x)\geq 0\ \ \text{for all}\ \ x\in(\mathcal{M})^{\tilde{\delta}}.
\end{equation*}
Hence, rearranging $\Lambda = (\mathcal{M})^{\tilde{\delta}}$, we have
\begin{equation*}\label{}
  \nabla_iV_1(x)\nabla_iV_2(x)\geq 0\ \ \text{for all}\ \ x\in\Lambda,
\end{equation*}
which and \eqref{eq4.50} imply
\begin{equation*}\label{}
 \nabla_iV_1({x}_{\omega,*})= 0 =\nabla_iV_2({x}_{\omega,*})\ \ \text{for all}\ 1\le i\le N.
\end{equation*}
Then we conclude that ${x}_{\omega,*}$ in $\mathcal{M}$ if $\Lambda$ is smaller again if necessary.
\end{proof}

\begin{remark}
The proof of Theorem \ref{th4.6} implies
 the coupling constant $\beta$ should be small if necessary. But if $V_2(x) = CV_1(x)$ with $C>1$ is a constant, the coupling constant $\beta$ can be large.
\end{remark}

\section{Back to the original problem}\label{s5}
In this section, we are going to prove that the penalized solution $w_{\epsilon}$ obtained in Lemma \ref{le3.8} solves the original problem \eqref{eq1.1}. What we need to do is to construct a suitable penalized function $P_{\epsilon}$ such that not only \eqref{eq3.1} is true, but also it holds
\begin{equation}\label{eq5.1}
(u^i_{\epsilon})^{2p - 2}_+(x)\leq P_{\epsilon}(x),\,\,i = 1,2.
\end{equation}
Noting that once \eqref{eq5.1} is true, combining with the result in Section \ref{s4}, we immediately have
$$
\lim\limits_{{R\to \infty}\atop{{\epsilon\to 0}}}\|u^i_{\epsilon}\|_{L^{\infty}(\mathbb{R}^N\backslash B_{\epsilon R}(x^i_{\epsilon}))} =  0,
$$
which means the concentration phenomenon of $w_{\epsilon}$.

We will use the comparison principle of the single equation \eqref{eq5.9} below to prove \eqref{eq5.1}. Firstly, we need to linearize the penalized system \eqref{Ceq3.3} outside small balls.

\begin{proposition}\label{pr4.1}
Let $\epsilon > 0$ be small enough, $\delta\in (0,1)$,  ${x}^i_{\epsilon,\omega}$$(i = 1,2)$ and ${x}_{\omega,\epsilon}$ be the points that are given by Lemma  \ref{le4.5}. Then there exists $R > 0$,
such that
\begin{equation}\label{eq5.2}
      -\epsilon^{2}\Delta v_{\epsilon} + (1 - \delta)V_{min}(x)v_{\epsilon}\leq P_{\epsilon} v_{\epsilon}\ \  \text{in}\ \mathbb{R}^N\backslash  B_{R\epsilon}(x_{\omega,\epsilon}),
\end{equation}
where $v_{\epsilon} := u^1_{\epsilon} + u^2_{\epsilon}$ and
\begin{equation}\label{keq5.2}
x_{\omega,\epsilon} := \frac{x^1_{\epsilon} + x^2_{\epsilon} + x_{\epsilon}}{3}
\end{equation}
 respectively.
\end{proposition}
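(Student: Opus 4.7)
The plan is to sum the two equations of the penalized system \eqref{Ceq3.3}. Using the pointwise bound $V_1 u^1_\epsilon + V_2 u^2_\epsilon \geq V_{min}(x) v_\epsilon$, the problem reduces to showing that on $\mathbb{R}^N\setminus B_{R\epsilon}(x_{\omega,\epsilon})$ the right-hand side
\[
\mathfrak{g}_\epsilon(u^1_\epsilon)+\mathfrak{g}_\epsilon(u^2_\epsilon)+p\beta\bigl(\tilde{\mathfrak{g}}_\epsilon(u^1_\epsilon)\widetilde{\mathfrak{G}}_\epsilon(u^2_\epsilon)+\tilde{\mathfrak{g}}_\epsilon(u^2_\epsilon)\widetilde{\mathfrak{G}}_\epsilon(u^1_\epsilon)\bigr)
\]
is dominated by $\delta V_{min}(x) v_\epsilon + P_\epsilon v_\epsilon$. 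I will split the exterior of $B_{R\epsilon}(x_{\omega,\epsilon})$ into $\mathbb{R}^N\setminus\Lambda$, where the cutoff is active, and $\Lambda\setminus B_{R\epsilon}(x_{\omega,\epsilon})$, where the unpenalized nonlinearity must be absorbed using smallness of the solution.

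On $\mathbb{R}^N\setminus\Lambda$, the very definitions of $g_\epsilon$ and $\tilde g_\epsilon$ yield the algebraic bounds $\mathfrak{g}_\epsilon(u^i_\epsilon)\leq P_\epsilon u^i_\epsilon$, $\tilde{\mathfrak{g}}_\epsilon(u^i_\epsilon)\leq\sqrt{P_\epsilon/p}$, and $\widetilde{\mathfrak{G}}_\epsilon(u^j_\epsilon)\leq u^j_\epsilon\sqrt{P_\epsilon/p}$. Substituting, each coupling contribution is at most $\beta P_\epsilon u^j_\epsilon$, and the total is bounded by $(1+\beta)P_\epsilon v_\epsilon$. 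Since $P_\epsilon$ in \eqref{eq3.1} is constrained only by its decay rate, the multiplicative constant is harmless: replacing $P_\epsilon$ by $(1+\beta)P_\epsilon$ still satisfies \eqref{eq3.1}, so this region contributes the $P_\epsilon v_\epsilon$ term cleanly.

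On $\Lambda\setminus B_{R\epsilon}(x_{\omega,\epsilon})$ the cutoffs are inactive, so the nonlinearities equal $u_i^{2p-1}$ and $\beta u_1^{p-1}u_2^p$. By Claim 2 in the proof of Lemma \ref{le4.1}, and its analogue in Lemma \ref{le4.5}, the three families of concentration points lie within $O(\epsilon)$ of one another; hence for $R$ large the ball $B_{R\epsilon}(x_{\omega,\epsilon})$ contains each $B_{R\epsilon/2}(x^i_\epsilon)$ (respectively, each $B_{R\epsilon/2}(\hat x^i_{\omega,\epsilon})$). Combining this with the decay statements (iv) and (vii) of Lemma \ref{le4.1} (resp.\ Lemma \ref{le4.5}), we have $\eta_{R,\epsilon}:=\|v_\epsilon\|_{L^\infty(\Lambda\setminus B_{R\epsilon}(x_{\omega,\epsilon}))}^{2p-2}\to 0$ as $R\to\infty$ and $\epsilon\to 0$. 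Using $u_i^{2p-1}+\beta u_1^{p-1}u_2^p\leq (1+\beta) v_\epsilon^{2p-2}v_\epsilon$ together with $V_{min}(x)\geq \min\{\textsf{m}_1,\textsf{m}_2\}>0$ on $\Lambda$, the right-hand side is bounded by $(1+\beta)\eta_{R,\epsilon}v_\epsilon\leq \delta V_{min}(x) v_\epsilon$ for $R$ large and $\epsilon$ small, finishing this region.

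The main obstacle is securing a \emph{single} radius $R$ that makes both regional estimates hold uniformly for small $\epsilon$ and for \emph{both} the ground-state and higher-energy solutions. The boundedness of $|x^i_\epsilon-x^j_\epsilon|/\epsilon$ from Claim 2 of Lemma \ref{le4.1} (and the hatted analogue) is exactly what packs all the concentration into one $O(\epsilon)$-cluster; without it, one would need a separate ball around each $x^i_\epsilon$ and the geometric compatibility required by later sections (e.g.\ the super-solution construction of Section \ref{s6}) would break. Once this cluster statement is in hand, the absorption argument via Young's inequality and the uniform decay estimates is routine.
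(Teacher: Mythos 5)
Your proof is correct and takes essentially the same route as the paper: split $\mathbb{R}^N\setminus B_{R\epsilon}(x_{\omega,\epsilon})$ into $\mathbb{R}^N\setminus\Lambda$ (where the cutoff definitions of $\mathfrak{g}_\epsilon$, $\tilde{\mathfrak{g}}_\epsilon$, $\widetilde{\mathfrak{G}}_\epsilon$ give the $P_\epsilon v_\epsilon$ bound) and $\Lambda\setminus B_{R\epsilon}(x_{\omega,\epsilon})$ (where the clustering from Claim 2 plus the uniform decay $(iv)$, $(vii)$ of Lemmas~\ref{le4.1}/\ref{le4.5} make $v_\epsilon^{2p-2}$ small enough to be absorbed by $\delta V_{min}$). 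You are somewhat more explicit than the paper's terse proof about the coupling constant $(1+\beta)$, the $V_{min}\ge\textsf{m}_1$ lower bound on $\Lambda$, and why a single ball around the centroid suffices, but the argument is the same.
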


\begin{proof}
By Lemma \ref{le4.4}, there exists $R > 0$ such that for all $x\in U\backslash  B_{R\epsilon}(x_{\omega,\epsilon})$,
\begin{align*}
 &\quad\mathfrak{g}_{\epsilon}(u^1_{\epsilon}) + p\tilde{\mathfrak{g}}_{\epsilon}(u^1_{\epsilon})\widetilde{\mathfrak{G}}_{\epsilon}(u^2_{\epsilon})
\leq \delta \chi_{\Lambda}(u^1_{\epsilon} + u^2_{\epsilon})
\end{align*}
and if $x\in \mathbb{R}^N\backslash\Lambda$,
\begin{align*}
 &\quad\mathfrak{g}_{\epsilon}(u^1_{\epsilon}) + p\tilde{\mathfrak{g}}_{\epsilon}(u^1_{\epsilon})\widetilde{\mathfrak{G}}_{\epsilon}(u^2_{\epsilon})
\leq P_{\epsilon}(u^1_{\epsilon} + u^2_{\epsilon}).
\end{align*}
Hence we have
$$
 -\epsilon^{2}\Delta u^1_{\epsilon} + V_1(x)u^1_{\epsilon}\leq \delta \chi_{\Lambda}(u^1_{\epsilon} + u^2_{\epsilon}) + P_{\epsilon}(u^1_{\epsilon} + u^2_{\epsilon}).
$$
Similarly, we have
$$
 -\epsilon^{2}\Delta u^2_{\epsilon} + V_2(x)u^2_{\epsilon}\leq \delta \chi_{\Lambda}(u^1_{\epsilon} + u^2_{\epsilon}) + P_{\epsilon}(u^1_{\epsilon} + u^2_{\epsilon}).
$$
Then we complete the proof.
\end{proof}

We are now in a position to construct penalized solutions for the linearized system in Proposition \ref{pr4.1}. By the penalized function, it is enough to check that the penalized solution $w_{\epsilon}$ solve the original problem \eqref{eq1.1} (see \eqref{eq5.9} below).
Moreover, the penalized function makes us obtain a good decay about the solution $w_{\epsilon}$ in Lemma \ref{le3.8}, which is necessary in verifying the assumptions in Lemma \ref{LE4.6}, see section \ref{s6} below.

Noting that by the classical bootstrap argument and nonnegativeness of $w_{\epsilon}$, we can conclude that
\begin{equation}\label{eq5.5}
  \limsup_{\epsilon\to 0}(\|u^1_{\epsilon}\|_{L^{\infty}(\Lambda)} + \|u^2_{\epsilon}\|_{L^{\infty}(\Lambda)})\leq \widetilde{C} < \infty.
\end{equation}

\begin{proposition}\label{pr5.3}
(Construction of barrier functions) Let $\{x_{\omega,\epsilon}\}\subset \Lambda$ be the family of points that are given in \eqref{keq5.2}. Assume that either \eqref{AAeq1.8} or \eqref{AAeq1.9} holds.
Then for sufficiently small $\epsilon > 0$, there exist $\overline{U}_{\epsilon}\in \mathcal{H}_{\epsilon}\cap C^{2}(\mathbb{R}^N)$ and $P_{\epsilon}$ satisfying the assumption \eqref{eq3.1}, such that $\overline{{U}}_{\epsilon} > 0$ satisfies
\begin{equation*}
  \left\{
    \begin{array}{ll}
     -\epsilon^2\Delta \overline{U}_{\epsilon} + (1 - \delta)V_{min}(x)\overline{U}_{\epsilon}\geq P_{\epsilon}\overline{U}_{\epsilon}, & \text{in}\ \mathbb{R}^N\backslash B_{R\epsilon}(x_{\omega,\epsilon}),\vspace{0.12cm} \\
      \overline{U}_{\epsilon}\geq \widetilde{C}, & \text{in}\  B_{R\epsilon}(x_{\omega,\epsilon}),
    \end{array}
  \right.
\end{equation*}
where $\widetilde{C}$ is the constant in \eqref{eq5.5}. Moreover, $\overline{{U}}^{2p - 2}_{\epsilon} < P_{\epsilon}$ in $\mathbb{R}^N\backslash \Lambda$.
\end{proposition}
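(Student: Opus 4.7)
The plan is to handle the two hypotheses \eqref{AAeq1.8} and \eqref{AAeq1.9} separately. In each case, I will construct $\overline{U}_\epsilon$ as an explicit radial profile centered at $x_{\omega,\epsilon}$ whose shape matches the decay in part $(v)$ of Theorem \ref{th1.1}, and then reverse-engineer $P_\epsilon$ to satisfy both the differential inequality $-\epsilon^2\Delta\overline{U}_\epsilon+(1-\delta)V_{\min}\overline{U}_\epsilon\ge P_\epsilon\overline{U}_\epsilon$ outside the small ball and the pointwise bound $\overline{U}_\epsilon^{2p-2}<P_\epsilon$ in $\mathbb{R}^N\setminus\Lambda$. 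After the rescaling $y=(x-x_{\omega,\epsilon})/\epsilon$ the target inequality becomes dimensionless, so the computations reduce to choosing the growth rates of the constants in the ansatz.

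In the case \eqref{AAeq1.8} with $\sigma\in[0,1)$, I would take
\[
\overline{U}_\epsilon(x)=\widetilde{C}\exp\!\Bigl(-c_{\epsilon,\sigma}\,\langle(x-x_{\omega,\epsilon})/\epsilon\rangle^{1-\sigma}\Bigr),
\]
where $\langle\cdot\rangle$ is a smoothed modulus equal to a constant on $B_R$ and to $|\cdot|$ outside $B_{2R}$, and $c_{\epsilon,\sigma}\to+\infty$ slowly. A direct computation of $-\epsilon^2\Delta\overline{U}_\epsilon$ produces a leading term of order $c_{\epsilon,\sigma}^2(1-\sigma)^2|x-x_{\omega,\epsilon}|^{-2\sigma}\overline{U}_\epsilon$, while \eqref{AAeq1.8} gives $V_{\min}(x)\ge c|x|^{-2\sigma}$ at infinity. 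Taking $c_{\epsilon,\sigma}\to\infty$ sufficiently slowly (so that inside $B_{R\epsilon}$ the exponent is $o(1)$, ensuring $\overline{U}_\epsilon\ge\widetilde C$ on that ball) and setting $P_\epsilon(x)=\tau_\epsilon\,|x|^{-(2+\kappa)\sigma}\chi_{\mathbb{R}^N\setminus\Lambda}$ with $\tau_\epsilon\to 0$ makes the differential inequality hold, and the exponential decay of $\overline{U}_\epsilon$ then trivially gives $\overline{U}_\epsilon^{2p-2}\ll P_\epsilon$. The limit $\sigma=1$ is instead handled with the power ansatz $\overline{U}_\epsilon=C\epsilon^{C_{N,\epsilon}}(1+|x|)^{-C_{N,\epsilon}}$, consistent with the switch $h(1)=0$ in Theorem \ref{th1.1}.

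For the case \eqref{AAeq1.9}, where $V_{\min}$ may vanish faster than $|x|^{-2}$ or even have compact support, I would use Hardy's inequality \eqref{CCeq3.3} as the guiding principle: since $-\epsilon^2\Delta-\theta\epsilon^2|x|^{-2}$ is positive on $D^{1,2}(\mathbb{R}^N)$ whenever $\theta<(N-2)^2/4$, we are allowed to set $P_\epsilon(x)=\theta_\epsilon\epsilon^2/|x|^2$ outside $\Lambda$ with $\theta_\epsilon$ small, and Proposition \ref{pr3.1} still applies. The barrier will be taken as the product
\[
\overline{U}_\epsilon(x)=\widetilde{C}\exp\!\Bigl(-\tfrac{\textsf{m}_1|x-x_{\omega,\epsilon}|}{\epsilon\,(1+|x-x_{\omega,\epsilon}|)}\Bigr)\cdot\frac{\epsilon^{N-2}}{\epsilon^{N-2}+|x-x_{\omega,\epsilon}|^{N-2}},
\]
in which the exponential factor provides a sharp decay of width $\epsilon$ inside $\Lambda$ (where $V_{\min}\ge\textsf{m}_1/2$ absorbs it via $V_1,V_2\ge\textsf{m}_i$), while $(\epsilon^{N-2}+|y|^{N-2})^{-1}$ is, modulo a harmless bounded correction, harmonic away from the origin in the rescaled variable. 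Expanding $-\epsilon^2\Delta\overline{U}_\epsilon$ by Leibniz leaves the positive diagonal contributions of each factor together with a cross term controlled by $\epsilon^2\theta|x-x_{\omega,\epsilon}|^{-2}\overline{U}_\epsilon$, which is exactly what $P_\epsilon$ can absorb. The condition $2p-2>2/(N-2)$ is what guarantees $\overline{U}_\epsilon^{2p-2}\sim|x|^{-(N-2)(2p-2)}\ll\epsilon^2/|x|^2\sim P_\epsilon$ at infinity, so the pointwise bound follows.

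The main obstacle I expect is the sign management of the gradient-gradient cross term in the second case: it is a priori indefinite and must be dominated by the positive principal parts. The remedy is to fix the width of the exponential localizer at exactly the scale $\epsilon$ (not smaller), so that the cross term inherits an extra factor of $\epsilon$ and is negligible outside a bounded set in the rescaled variable, while inside that bounded set one uses that the polynomial factor is comparable to a constant. A secondary, purely book-keeping, difficulty is calibrating the constants $c_{\epsilon,\sigma}$ and $\theta_\epsilon$ so that simultaneously (i) $\overline{U}_\epsilon\ge\widetilde C$ on $B_{R\epsilon}(x_{\omega,\epsilon})$, (ii) $P_\epsilon$ satisfies the decay conditions in \eqref{eq3.1}, and (iii) $\overline{U}_\epsilon^{2p-2}<P_\epsilon$ outside $\Lambda$; each of these gives one inequality on the rates, and all three can be met simultaneously. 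Once Proposition \ref{pr5.3} is established, a standard comparison argument with the linearized inequality \eqref{eq5.2} of Proposition \ref{pr4.1} closes the loop and yields \eqref{eq5.1}.
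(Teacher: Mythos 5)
Your proposed barrier differs qualitatively from the paper's, and in the second case it has a gap that, as written, would make the differential inequality fail at infinity. Let me focus there.

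Under \eqref{AAeq1.9} the paper does not use a product of an exponential localiser with $(\epsilon^{N-2}+|z|^{N-2})^{-1}$. It uses a purely polynomial profile that is a fixed constant in $\Lambda$ and, outside $U$, equals $\tilde{w}_{\tilde\mu}(x)=|x|^{-(N-2)}\bigl(1-c|x|^{-\varrho}\bigr)$ with $\varrho=\frac{(2p-2)(N-2)-2}{4}$; the ``bump'' factor $p^{\tilde\nu}_{\epsilon^{\dot p}}$ is \emph{constant} outside a small ball inside $\Lambda$, so there is no gradient cross term in $\mathbb{R}^N\setminus\Lambda$. The whole point of the correction $-c|x|^{-\varrho}$ is that it makes $-\Delta\tilde{w}_{\tilde\mu}\sim c\,\varrho(N-2+\varrho)\,|x|^{-N-\varrho}$, a positive quantity with a tunable decay rate. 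Comparing with $P_\epsilon\tilde{w}_{\tilde\mu}\sim\epsilon^{5/2}|x|^{-(N+2\varrho)}$ gives the required differential inequality outside $U$ because $\varrho<2\varrho$ and the $\epsilon$-powers are favourable. Your $W(z)=\epsilon^{N-2}/(\epsilon^{N-2}+|z|^{N-2})$ corresponds, after rescaling, to the special choice ``$\varrho=N-2$'' in the expansion $W\approx\epsilon^{N-2}|z|^{-(N-2)}-\epsilon^{2(N-2)}|z|^{-2(N-2)}+\cdots$, which yields $-\Delta_xW\sim\epsilon^{2N-4}|z|^{-(2N-2)}$. With your $P_\epsilon=\theta_\epsilon\epsilon^2|x|^{-2}$ the right-hand side $P_\epsilon\,W$ behaves like $\theta_\epsilon\epsilon^N|x|^{-N}$, and since $2N-2>N$ for $N\ge3$ the positive term $-\epsilon^2\Delta W$ is dominated by $P_\epsilon W$ as $|x|\to\infty$. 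Outside $U$ the potential $V_{\min}$ is allowed to vanish, so nothing compensates; the inequality $-\epsilon^2\Delta\overline{U}_\epsilon+(1-\delta)V_{\min}\overline{U}_\epsilon\ge P_\epsilon\overline{U}_\epsilon$ fails for large $|x|$. The exponential factor only makes matters worse: its dominant contribution to $-\epsilon^2\Delta E/E$ for small $\epsilon$ is $-\mathsf{m}_1^2/(1+|z|)^4<0$ (the $-\epsilon^2|\nabla\phi_E|^2$ term), and the cross term $-2\epsilon^2\nabla E\cdot\nabla W$ is also negative because both factors are radially decreasing. So neither ``diagonal'' term is positive at leading order, contrary to what you assert, and the Hardy inequality is used in the paper only for the comparison principle for $-\Delta-\epsilon^{-2}P_\epsilon$, not to establish the differential inequality of the barrier.

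Concerning Case \eqref{AAeq1.8}, your exponential ansatz is again a genuinely different route from the paper's, which multiplies a bump that is constant outside a small ball inside $\Lambda$ by the power $w_\mu(x)\sim|x|^{-\mu}$ outside $U$ and takes $P_\epsilon=\epsilon^\delta|x|^{-(2+\kappa)\sigma}\chi_{\mathbb{R}^N\setminus\Lambda}$, $\delta\in(0,4p-4)$. An exponential barrier can work, but you need to be careful about the calibration you claim. First, the leading term of $-\epsilon^2\Delta\overline{U}_\epsilon$ is $-\epsilon^2|\nabla\phi|^2\overline{U}_\epsilon<0$; it is $V_{\min}$ that absorbs this, not the other way around. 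Second, if $c_{\epsilon,\sigma}\to\infty$ and the smoothed modulus $\langle\cdot\rangle$ is a \emph{positive} constant on $B_R$, then the exponent on $B_{R\epsilon}$ is $c_{\epsilon,\sigma}\cdot O(1)\to\infty$, so $\overline{U}_\epsilon\to0$ there, contradicting $\overline{U}_\epsilon\ge\widetilde C$. You would need $\langle\cdot\rangle$ to vanish on $B_R$ (with enough vanishing order to keep $\langle\cdot\rangle^{1-\sigma}$ in $C^2$), and even then, for $\sigma=0$ the constraint from the differential inequality forces $c_{\epsilon,0}$ to stay \emph{bounded}, not to diverge. The diverging constant $c_{\epsilon,\sigma}$ appearing in Theorem~\ref{th1.1}(v) is a feature of the \emph{a posteriori} decay estimate in Section~\ref{s6}, established by a separate comparison after $(u^1_\epsilon+u^2_\epsilon)^{2p-2}\le P_\epsilon$ has been proved; the barrier of Proposition~\ref{pr5.3} need not exhibit that sharp rate. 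In short, the plan can be repaired, but as stated it conflates the barrier of Proposition~\ref{pr5.3} with the decay profile of the final solution, omits the sign analysis of the Laplacian of the exponential, and, in the fast-decay case, lacks the tunable polynomial correction that makes $-\Delta\overline{U}_\epsilon$ decay no faster than $P_\epsilon\overline{U}_\epsilon$.
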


\begin{proof}

Let
\begin{equation}\label{space}
r = \frac{1}{3}\min_{i = 1,2}\liminf_{\epsilon\to 0}dist (x_{\omega,\epsilon},\partial\Lambda)
\end{equation}
(this is reasonable because of the estimates $(iii)$ in Lemma \ref{le4.4} and $(iii)'$ in Lemma \ref{le4.5}). Define
\begin{equation}\label{eq5.7}
p^{\nu}_{\epsilon}(x) = \left\{
                       \begin{array}{ll}
                         \overline{C}\Big(1 + \frac{\nu(r - |x - x_{\omega,\epsilon}|)^{\beta}}{\epsilon^{2}}\Big), & \text{in}\ B_r(x_{\omega,\epsilon}), \vspace{0.12cm}\\
                         \overline{C},& \text{in}\ \mathbb{R}^N\backslash B_r(x_{\omega,\epsilon}),
                       \end{array}
                     \right.
\end{equation}
where $\beta >2$ and the constants $\overline{C}>0$, $\nu>0$ will be determined later.

Let
\begin{equation*}
\overline{U}^{\nu,\mu}_{\epsilon}(x) := \epsilon^{2}p^{\nu}_{\epsilon}(x)w_{\mu}(x),
\end{equation*}
where the function $w_{\mu}\in C^2(\mathbb{R}^N)$ with $\inf_{x\in U\backslash\Lambda}w_{\mu}(x)>0$ and $\mu>0$ is defined as
\begin{equation*}
w_{\mu}(x) =\left\{
              \begin{array}{ll}
                \frac{1}{d},\ d = \max_{\Lambda}|x| & \text{if}\ x\in\Lambda, \vspace{0.12cm}\\
                \frac{1}{|x|^{\mu}}, & \text{if}\ x\in\mathbb{R}^N\backslash U.
              \end{array}
            \right.
\end{equation*}
In the sequel, we set
\begin{equation*}
C^1_{\mu,d} := \inf_{U\backslash\Lambda}w_{\mu}(x),\ \ C^2_{\mu,d} := \sup_{U\backslash\Lambda}w_{\mu}(x)\ \ \text{and}\ C^3_{\mu,d} = \sup_{x\in U\backslash\Lambda}|\Delta w_{\mu}(x)|.
\end{equation*}

\vspace{0.2cm}

\noindent\textbf{Case 1}: $\liminf_{|x|\to+\infty}V_{min}(x)|x|^{2\sigma}>0$ with $\sigma\leq 1$.

A direction computation shows that
\begin{align*}
-\epsilon^2\Delta \overline{U}^{\nu,\mu}_{\epsilon}/\overline{C}
  & \geq  \left\{
        \begin{array}{ll}
{\epsilon^2\nu\frac{\beta(N - 1)(r - |x - x_{\omega,\epsilon}|)^{\beta - 1}}{|x - x_{\omega,\epsilon}|}}&\\
\qquad {-\epsilon^2\nu \beta(\beta - 1)(r - |x-x_{\omega,\epsilon}|)^{\beta - 2}}, & x\in B_r(x_{\omega,\epsilon})\backslash B_{R{\epsilon}}(x_{\omega,\epsilon})\vspace{0.12cm},\\
                                        0,& x\in \Lambda\backslash B_r(x_{\omega,\epsilon})\vspace{0.12cm},\\
 -\epsilon^4C^3_{\mu,d},&x\in U\backslash\Lambda,\vspace{0.12cm}\\
      \epsilon^4\frac{(N - 2 - \mu)\mu}{|x|^{\mu + 2}},& x\in
 \mathbb{R}^N\backslash U,
        \end{array}
      \right.
\end{align*}
which implies that
\begin{align*}
&\quad \big(-\epsilon^2\Delta \overline{U}^{\nu,\mu}_{\epsilon} + (1 - \delta)V_{min}(x)\overline{U}^{\nu,\mu}_{\epsilon}\big)/\overline{C}\vspace{0.12cm}\\
  & \geq  \left\{
        \begin{array}{ll}
          \epsilon^2\big({-\nu \beta(\beta - 1)(r - |x-x_{\omega,\epsilon}|)^{\beta - 2}} + (1 - \delta)\textsf{m}_1/d\big), &  x\in B_{r}(x_{\omega,\epsilon})\backslash B_{R{\epsilon}}(x_{\omega,\epsilon}),\vspace{0.12cm}\\
           0, & x\in \Lambda\backslash B_{r}(x_{\omega,\epsilon}),\vspace{0.12cm}\\
         \epsilon^2\textsf{m}_1C^1_{\mu,d} -\epsilon^4C^3_{\mu,d},&x\in U\backslash\Lambda,\vspace{0.12cm}\\
\epsilon^2\frac{1}{|x|^{\mu + 2\sigma}} + \epsilon^{4}\frac{\mu(N - 2 -\mu)}{|x|^{\mu + 2}},&x\in \mathbb{R}^N\backslash U.
        \end{array}
      \right.
\end{align*}

Then, letting $\tilde{\nu} = \nu_{d,\textsf{m}_1,\beta,r}>0$ and $\epsilon_{\mu,d,\textsf{m}_1}>0$ be the two constants such that

\begin{eqnarray}\label{eq5.10}
\begin{split}
&(i)'\ \ -\tilde{\nu}\beta(\beta - 1)r^{\beta - 2} + (1 - \delta)\textsf{m}_1/d\geq 0,\\
&(ii)'\ \ \textsf{m}_1C^1_{\mu,d} - \epsilon^2_{\mu,d,\textsf{m}_1}C^3_{\mu,d} \geq \frac{\textsf{m}_1C^1_{\mu,d}}{2},\\
&(iii)'\ \ \epsilon^2_{\mu,d,\textsf{m}_1}(\mu(N - 2 -\mu))\geq-\frac{1}{2},
\end{split}
\end{eqnarray}
since $\sigma\in[0,1]$, we have
\begin{align*}
\big(-\epsilon^2\Delta \overline{U}^{\tilde{\nu},\tilde{\mu}}_{\epsilon} + (1 - \delta)V_{min}(x)\overline{U}^{\tilde{\nu},\tilde{\mu}}_{\epsilon}\big)/\overline{C}\vspace{0.12cm} \geq  \left\{
        \begin{array}{ll}
           0, & x\in \Lambda,\vspace{0.12cm}\\
           \frac{\epsilon^2\textsf{m}_1C^1_{\mu,d}}{2},&x\in U\backslash\Lambda,\vspace{0.12cm}\\
\frac{\epsilon^2}{2|x|^{\mu + 2\sigma}} ,&x\in \mathbb{R}^N\backslash U,
        \end{array}
      \right.
\end{align*}
if
\begin{equation*}
\epsilon<\epsilon_{\mu,d,\textsf{m}_1}.
\end{equation*}

Define
\begin{align*}\label{eq5.8}
  P_{\epsilon}(x) = \frac{\epsilon^{\delta}}{|x|^{
(2 + \kappa)\sigma}}\chi_{\mathbb{R}^N\backslash\Lambda},
\end{align*}
where $\delta\in(0,4p - 4)$ and $\kappa > 0$ is a small parameter. It is easy to check that \eqref{eq3.1} is satisfied by such $P_{\epsilon}$. Moreover, letting $\overline{C}_{d,\tilde{\nu}}>0$, $\tilde{\mu} = \mu_{p,\sigma}>0\ \text{and}\ \hat{\epsilon}_{\mu_{p,\sigma},\hat{d},U,\textsf{m}_1,\overline{C}_{d,\tilde{\nu}}}>0$ satisfying
\begin{eqnarray*}\label{}
\begin{split}
&(i)''\ \ \inf_{x\in B_{R\epsilon}(x_{\omega,\epsilon})}\overline{C}_{d,\tilde{\nu}}\tilde{\nu}(r - |x - x_{\omega,\epsilon}|)/d\geq \widetilde{C},\\
&(ii)''\ \ (2p - 2)\tilde{\mu} = (2\sigma + 2\kappa),\\
&(iii)''\ \ \frac{\textsf{m}_1C^1_{\tilde{\mu},d}}{2}\geq \frac{\hat{\epsilon}_{\tilde{\mu},\hat{d},U,\textsf{m}_1,\overline{C}_{d,\tilde{\nu}}}^{\delta}C^2_{\tilde{\mu},d}}{\hat{d}^{\tilde{\mu}}},\\
&(iv)''\ \ \hat{\epsilon}_{\tilde{\mu},\hat{d},U,\textsf{m}_1,\overline{C}_{d,\tilde{\nu}}}^{\delta}\leq \frac{1}{2}\\
\text{and}&\ (v)''\ \ {\hat{\epsilon}^{4(p - 1)- \delta}_{\tilde{\mu},\hat{d},U,\textsf{m}_1,\overline{C}_{d,\tilde{\nu}}}\overline{C}^{2p - 2}_{d,\tilde{\nu}}|x|^{(2 + \kappa)\sigma}(w_{\mu}(x))^{2p - 2}}\chi_{\mathbb{R}^N\backslash\Lambda}(x)\leq 1,
\end{split}
\end{eqnarray*}
where $\hat{d} = \min_{U\backslash\Lambda}|x|$, then
there hold
\begin{align*}
  \big(\overline{U}^{\tilde{\nu},\tilde{\mu}}_{\epsilon}(x)\big)^{2p - 2} < P_{\epsilon}(x)\ \ \forall x\in\mathbb{R}^N\backslash\Lambda
\end{align*}
and
\begin{equation*}
  \left\{
    \begin{array}{ll}
     -\epsilon^2\Delta \overline{U}^{\tilde{\nu},\tilde{\mu}}_{\epsilon} + (1 - \delta)V_{min}(x)\overline{U}^{\tilde{\nu},\tilde{\mu}}_{\epsilon}\geq P_{\epsilon}\overline{U}^{\tilde{\nu},\tilde{\mu}}_{\epsilon}, & \text{in}\ \mathbb{R}^N\backslash B_{R\epsilon}(x_{\omega,\epsilon}),\vspace{0.12cm} \\
      \overline{U}^{\tilde{\nu},\tilde{\mu}}_{\epsilon}\geq \widetilde{C}, & \text{in}\  B_{R\epsilon}(x_{\omega,\epsilon}),
    \end{array}
  \right.
\end{equation*}
if
\begin{equation*}\label{}
\left\{
  \begin{array}{ll}
    \overline{C} = \overline{C}_{d,\tilde{\nu}} = \overline{C}_{\textsf{m}_1,d,\nu_{d,\textsf{m}_1,\beta,r}}, &  \\
    \mu = \tilde{\mu}, &\vspace{0.12cm}\\
    \epsilon<\min\{\hat{\epsilon}_{\tilde{\mu},\hat{d},U,\textsf{m}_1,\overline{C}_{d,\tilde{\nu}}},\epsilon_{\mu,d}\} = \min\{\hat{\epsilon}_{\tilde{\mu},d,U,\textsf{m}_1,\overline{C}_{\textsf{m}_1,\hat{d},\nu_{d,\textsf{m}_1,\beta,r}}},\epsilon_{\tilde{\mu},d}\}. &
  \end{array}
\right.
\end{equation*}

As a result, we complete the proof of case 1 by letting
\begin{equation*}
\overline{U}_{\epsilon}(x): = \overline{U_{\epsilon}}^{\tilde{\nu},\tilde{\mu}}(x) := \epsilon^2p^{\tilde{\nu}}_{\epsilon}(x)w_{\tilde{\mu}}(x)\ \ x\in\mathbb{R}^N
\end{equation*}
and
\begin{equation*}
P_{\epsilon}(x) = \frac{\epsilon^{\delta}}{|x|^{(2 + \kappa)\sigma}}\chi_{\mathbb{R}^N\backslash\Lambda}\ \ \text{with}\ \delta\in(0,4p - 4).
\end{equation*}

\vspace{0.2cm}

\noindent\textbf{Case 2:} $N\geq 3$ and $2p>2 + \frac{2}{N - 2}$.

In this case we let $\tilde{\mu} = N - 2$ and
\begin{equation*}\label{eq5.20}
\tilde{w}_{\tilde{\mu}}(x) =\left\{
              \begin{array}{ll}
                1,& \text{if}\ x\in\Lambda, \\
                |x|^{-\tilde{\mu}}(1 - c|x|^{-\varrho}), & \text{if}\ x\in\mathbb{R}^N\backslash U,
              \end{array}
            \right.
\end{equation*}
where $c>0$ is suitably small and
{\begin{equation}\label{**}
\varrho = \frac{(2p - 2)(N - 2) - 2}{4}.
\end{equation}}
It is easy to check that
\begin{equation*}\label{}
  -\Delta \tilde{w}_{\mu} = c\frac{\varrho(\tilde{\mu} + \varrho)}{|x|^{N + \varrho}},\ \ x\in\mathbb{R}^N\backslash U.
\end{equation*}

For simplicity, we denote
$$
\dot{p}:= \frac{3}{2p - 2}.
$$
Define
$$
\overline{U}^{\tilde{\nu},\tilde{\mu}}_{\epsilon}(x) = {\epsilon^{\dot{p}}} p^{\tilde{\nu}}_{\epsilon^{\dot{p}}}(x)\tilde{w}_{\tilde{\mu}}(x),
$$
where $p^{\tilde{\nu}}_{\epsilon^{\dot{p}}}(x)$ is defined in \eqref{eq5.7}. By the same choice of the constants as that in \eqref{eq5.10}, we have
\begin{align*}
\big(-\epsilon^2\Delta \overline{U}^{\tilde{\nu},\tilde{\mu}}_{\epsilon} + (1 - \delta)V_{min}(x)\overline{U}^{\tilde{\nu},\tilde{\mu}}_{\epsilon}\big)/\overline{C}\vspace{0.12cm} \geq  \left\{
        \begin{array}{ll}
           0, & x\in \Lambda,\\
           \frac{(\epsilon^{\dot{p}+2})\textsf{m}_1C^1_{\tilde{\mu},d}}{2},&x\in U\backslash\Lambda,\\
c\frac{\epsilon^{\dot{p} + 2}}{|x|^{N + \varrho}} ,&x\in \mathbb{R}^N\backslash U,
        \end{array}
      \right.
\end{align*}
if $\epsilon>0$ is small enough.

Now we define
\begin{equation*}\label{}
  {P}_{\epsilon}(x) = \frac{\epsilon^{\frac{5}{2}}}{|x|^{2 + 2\varrho}}\chi_{\mathbb{R}^N\backslash\Lambda}.
\end{equation*}
Obviously, \eqref{eq3.1} is satisfied by such $\widetilde{P}_{\epsilon}$. Moreover, letting $\overline{C}\geq 1$ large enough, there hold
\begin{equation*}\label{}
 \big(-\epsilon^2\Delta \overline{U}^{\tilde{\nu},\tilde{\mu}}_{\epsilon} + (1 - \delta)V_{min}(x)\overline{U}^{\tilde{\nu},\tilde{\mu}}_{\epsilon}\big)/\overline{C}\geq P_{\epsilon}(x)\overline{U}^{\tilde{\nu},\tilde{\mu}}_{\epsilon}(x),\ U^{\tilde{\nu},\tilde{\mu}}_{\epsilon}(x)\geq \widetilde{C}\ \ \text{in}\ \ B_{R\epsilon}(x_{w,\epsilon}),
\end{equation*}
and
\begin{equation*}\label{}
 (\overline{U}^{\tilde{\nu},\tilde{\mu}}_{\epsilon})^{2p - 2} = \Big(\frac{\epsilon^{\dot{p}}\overline{C}}{|x|^{\tilde{\mu}}}\Big)^{2p - 2} = \epsilon^3\overline{C}^{2p - 2}|x|^{-(N - 2)(2p - 2)} \leq P_{\epsilon}(x),\ \ \forall x\in\mathbb{R}^N\backslash\Lambda,
\end{equation*}
if $\epsilon>0$ is small enough. This completes the proof.
\end{proof}

\vspace{0.5cm}

\noindent Now we prove that the penalized solution $w_{\epsilon}$ obtained in Lemma \ref{le3.8} solves the original problem.

\begin{proof}[Proof of the existence assertion in Theorem \ref{th1.2}]

Denote $\breve{v}_{\varepsilon} = u^1_{\varepsilon} + u^2_{\varepsilon} - \overline{U}_{\varepsilon}$. Propositions \ref{pr4.1} and \ref{pr5.3} imply that
\begin{equation}\label{eq5.9}
\left\{
  \begin{array}{ll}
    -\epsilon^{2}\Delta \breve{v}_{\epsilon}  - P_{\epsilon}\breve{v}_{\epsilon} + ((1 - \delta)V)\breve{v}_{\epsilon} \leq 0, & \text{in}\ \mathbb{R}^N\backslash B_{R{\epsilon}}(x_{\omega,\epsilon}),\vspace{2mm}\\
    \breve{v}_{\epsilon}\leq 0, & \text{on}\ B_{R{\epsilon}}(x_{\omega,\epsilon}).
  \end{array}
\right.
\end{equation}

\vspace{0.2cm}

\noindent\textbf{Case 1:} $\liminf_{|x|\to+\infty}V_{min}(x)|x|^{2\sigma}>0$ with $\sigma\in[0,1]$.

In this case, the penalized function $P_{\epsilon}$ satisfies
\begin{equation*}\label{}
  P_{\epsilon}(x)\leq V(x),\ \ \forall x\in\mathbb{R}^N.
\end{equation*}
Then by comparison principle we have $\breve{v}_{\epsilon}\leq 0$ in $\mathbb{R}^N$. Hence $(u^1_{\epsilon} + u^2_{\epsilon})^{2p - 2}\leq \big(\overline{U}^{\tilde{\nu},\tilde{\mu}}_{\epsilon}\big)^{2p - 2}$ in $\mathbb{R}^N\backslash B_{R\epsilon}(x_{\omega,\epsilon})$. Furthermore, $ (u^1_{\epsilon} + u^2_{\epsilon})^{2p - 2} < P_{\epsilon}$ on $\mathbb{R}^N\backslash\Lambda$.

\vspace{0.2cm}

\noindent\textbf{Case 2:} $N\geq 3$ and $2p - 2>\frac{2}{N - 2}$.

In this case, we have 
\begin{equation*}\label{}
  \left\{
  \begin{array}{ll}
    \Big(-\Delta  - P_{\epsilon}(x)\epsilon^{-2}\Big)\breve{v}_{\epsilon}\leq 0, & \text{in}\ \mathbb{R}^N\backslash B_{R{\epsilon}}(x_{\omega,\epsilon}),\vspace{2mm}\\
    \breve{v}_{\epsilon}\leq 0, & \text{on}\ B_{R{\epsilon}}(x_{\omega,\epsilon}).
  \end{array}
\right.
\end{equation*}
But, by Hardy's inequality \eqref{CCeq3.3} we know that the operator $-\Delta - \epsilon^{-2}P_{\epsilon}(x)$ is a positive operator. Hence we also conclude that $(u^1_{\epsilon} + u^2_{\epsilon})^{2p - 2} < P_{\epsilon}$ on $\mathbb{R}^N\backslash\Lambda$.

As a result, the penalized solution $w_{\epsilon}$  constructed in Lemma \ref{le3.8} is the solution of the original problem \eqref{eq1.1}.
\end{proof}

\section{Verifying the assumptions in Lemma \ref{LE4.6}}\label{s6}

In this section, we are going to verify the decay estimates in Theorem \ref{th1.2}, which imply the assumptions in Lemma \ref{LE4.6}.
We split the argument into two subsections with respect to the two conditions \eqref{AAeq1.8} and \eqref{AAeq1.9}.

\subsection{The  slow decay case: $0\leq \sigma\leq1$}
The penalized function $P_{\epsilon}$ constructed in Proposition \ref{pr5.3} means that
\begin{equation*}\label{}
 (v_{\epsilon}(x))^{2p - 2}:= (u^1_{\epsilon}(x) + u^2_{\epsilon}(x))^{2p - 2}\leq \frac{\epsilon^{\delta}}{|x|^{(2 + \kappa)\sigma}},
\end{equation*}
from which and \eqref{eq5.2} we deduce that the function $\tilde{v}_{\epsilon}(x) = v_{\epsilon}(\epsilon x + x_{\omega,\epsilon})$ satisfies
\begin{equation}\label{eq6.2}
\left\{
  \begin{array}{ll}
    -\Delta \tilde{v}_{\epsilon} + (1 - \delta/2)\widetilde{V}_{\epsilon}(x)\tilde{v}_{\epsilon}(x)\leq 0, & x\in\mathbb{R}^N\backslash B_R, \\
    \tilde{v}_{\epsilon}(x)\leq \widetilde{C}, & x\in B_R,\\
    \tilde{v}_{\epsilon}(x)\to 0\ \ \text{as}\ |x|\to \infty,
  \end{array}
\right.
\end{equation}
where $\widetilde{V}_{\epsilon}(x) = V(\epsilon x + x_{\omega,\epsilon})$. 

It was proved in \cite{AM-061} that for every $m > 0$, there exists $\tilde{R}>0$ and $\epsilon_0 > 0$ such that
\begin{equation*}\label{}
  \widetilde{V}_{\epsilon}(x)\ge \frac{m}{|x|^{2\sigma}},\ \text{for all}\ |x|\ge\tilde{R}\ \text{if}\ 0<\epsilon<\epsilon_0,
\end{equation*}
by which and \eqref{eq6.2}, we conclude without loss of generality that
\begin{equation*}\label{eq6.4}
\left\{
  \begin{array}{ll}
    -\Delta \tilde{v}_{\epsilon} + \frac{m}{|x|^{2\sigma}}\tilde{v}_{\epsilon}(x)\leq 0, & x\in\mathbb{R}^N\backslash B_{\tilde{R}},\\
    \tilde{v}_{\epsilon}(x)\leq \widetilde{C}, & x\in B_{\tilde{R}},\\
    \tilde{v}_{\epsilon}(x)\to 0\ \ \text{as}\ |x|\to \infty.
  \end{array}
\right.
\end{equation*}
Then by the results in \cite{AM-061} again, we have the following Proposition.
\begin{proposition}
Let $\tilde{v}_{\epsilon}$ satisfy \eqref{eq6.2}. Then for every $|x|>\tilde{R}$, it holds
\begin{equation*}\label{}
  \tilde{v}_{\epsilon}(x) \leq \left\{
\begin{array}{ll}
      \widetilde{C}^1_{\tilde{R}}e^{-\frac{\sqrt{m}}{1 - \sigma}|x|^{1 - \sigma}}, & \text{if}\ 0<\sigma<1, \\
      \widetilde{C}^2_{\tilde{R}}|x|^{\frac{2 - N - \sqrt{(N - 2)^2 + 4m}}{2}}& \text{if}\ \sigma = 1,
   \end{array}
 \right.
\end{equation*}
where $\widetilde{C}^i_{\tilde{R}}$, $i = 1,2$, are suitably positive constants.
\end{proposition}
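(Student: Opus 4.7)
The plan is to construct explicit radial supersolutions adapted to each decay regime and then apply the weak comparison principle on the exterior domain $\mathbb{R}^N\setminus B_{\tilde R}$. Writing $\tilde v_\epsilon \le W$ reduces to checking two things: the differential inequality $-\Delta W + \frac{m}{|x|^{2\sigma}} W \geq 0$ outside $B_{\tilde R}$, and the boundary comparison $W \geq \widetilde C$ on $\partial B_{\tilde R}$ (after which the decay of both $\tilde v_\epsilon$ and $W$ at infinity closes the argument).

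For the case $0<\sigma<1$, I would try $W(x) = C\exp\bigl(-\phi(|x|)\bigr)$ with $\phi(r) = \tfrac{\sqrt{m}}{1-\sigma}r^{1-\sigma}$, so that $\phi'(r) = \sqrt{m}\,r^{-\sigma}$. A direct radial computation gives
\begin{equation*}
-\Delta W + \frac{m}{|x|^{2\sigma}}W = \Bigl(\phi''(r) + \tfrac{N-1}{r}\phi'(r) - (\phi'(r))^2 + \tfrac{m}{r^{2\sigma}}\Bigr)W = (N-1-\sigma)\sqrt{m}\,r^{-\sigma-1}\,W,
\end{equation*}
because the leading-order terms $(\phi')^2$ and $m r^{-2\sigma}$ cancel exactly. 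The residual is nonnegative for $r\geq \tilde R$ (enlarging $\tilde R$ if needed to absorb the discarded $\delta/2$ factor), which shows $W$ is a supersolution. Choosing $C = \widetilde C\,\exp\bigl(\tfrac{\sqrt m}{1-\sigma}\tilde R^{1-\sigma}\bigr)$ guarantees $W\geq \widetilde C \geq \tilde v_\epsilon$ on $\partial B_{\tilde R}$, and since $\tilde v_\epsilon, W\to 0$ at infinity, the comparison principle (applied on any annulus $B_\rho\setminus B_{\tilde R}$ and letting $\rho\to\infty$) yields $\tilde v_\epsilon(x)\le \widetilde C^1_{\tilde R} e^{-\frac{\sqrt m}{1-\sigma}|x|^{1-\sigma}}$.

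For the critical case $\sigma = 1$, the natural ansatz is the power $W(x) = C|x|^{-\alpha}$. A radial computation gives
\begin{equation*}
-\Delta W + \frac{m}{|x|^{2}}W = \bigl(-\alpha(\alpha + 2 - N) + m\bigr)|x|^{-\alpha-2} = \bigl(\alpha(N-2-\alpha) + m\bigr)|x|^{-\alpha-2},
\end{equation*}
so $W$ is a (super)solution precisely when $\alpha^2 - (N-2)\alpha - m = 0$. Selecting the positive root $\alpha = \tfrac{(N-2)+\sqrt{(N-2)^2+4m}}{2}$ produces a decaying supersolution; the negative of this exponent is exactly $\tfrac{2-N-\sqrt{(N-2)^2+4m}}{2}$ appearing in the claim. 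After fixing $C = \widetilde C\,\tilde R^{\alpha}$ so that $W\geq \widetilde C$ on $\partial B_{\tilde R}$, the same comparison argument gives the stated decay.

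I do not expect a genuine obstacle here; the proposition really follows from the explicit computation of the correct ansatz plus comparison. The only subtlety worth flagging is that in the regime $0<\sigma<1$ the coefficient $\tfrac{\sqrt m}{1-\sigma}$ is rigid: any smaller constant in the exponent would leave a negative leading term $(\phi')^2 - m r^{-2\sigma}<0$ instead of a perfect cancellation, so the ansatz only works with this precise choice, and the remainder term $(N-1-\sigma)\sqrt m r^{-\sigma-1}$ supplies the strict positivity needed outside a sufficiently large ball. The same supersolution argument was already used in \cite{AM-061}, so the writeup can proceed simply by invoking those constructions once the reduction to \eqref{eq6.2} is recorded.
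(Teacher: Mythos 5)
Your proposal is correct and takes essentially the same route as the paper: the paper reduces to the differential inequality \eqref{eq6.2}, replaces $\widetilde V_\epsilon$ by the Hardy-type bound $m|x|^{-2\sigma}$, and then simply invokes the decay estimates of \cite{AM-061}, which are themselves proved by exactly the radial supersolution constructions $W = Ce^{-\frac{\sqrt m}{1-\sigma}|x|^{1-\sigma}}$ and $W = C|x|^{-\alpha_+}$ that you wrote out. You have just unpacked the cited argument rather than citing it; the computations, the choice of constants on $\partial B_{\tilde R}$, and the exhaustion-by-annuli comparison are all sound.
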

As  a result, returning to $v_{\epsilon}$, we have for every $x\in\R^N$
\begin{equation*}\label{}
  {v}_{\epsilon}(x) \leq \left\{
\begin{array}{ll}
      \widetilde{C}^1_{\tilde{R}}e^{-\frac{\sqrt{m}}{1 - \sigma}\epsilon^{\sigma - 1}|x - x_{\omega,\epsilon}|^{1 - \sigma}}, & \text{if}\ 0<\sigma<1, \\
      \widetilde{C}^2_{\tilde{R}}\epsilon^{\frac{N - 2 + \sqrt{(N - 2)^2 + 4m}}{2}} |x - x_{\omega,\epsilon}|^{\frac{2 - N - \sqrt{(N - 2)^2 + 4m}}{2}},& \text{if}\ \sigma = 1.
   \end{array}
 \right.
\end{equation*}
Moreover, since
\begin{equation*}\label{}
  -\Delta v = \frac{1}{\epsilon^2}\Big(\sum_{i = 1}^2\big((u^i_{\epsilon})^{2p - 1} - V_i(x)u^i_{\epsilon}\big) +  \beta(u^1_{\epsilon})^{p - 1}(u^2_{\epsilon})^{p - 1}v_{\epsilon}\Big): = \frac{1}{\epsilon^2}f_{\epsilon},
\end{equation*}
by $L^q$-estimate in \cite{GT}, for every $z\in\partial B_{\delta}(x_{\omega,\epsilon})$, letting $q > N$ and $m$ big enough when $\sigma = 1$, it holds
\begin{eqnarray*}\label{eq6.8}
\begin{split}
\|v_{\epsilon}\|_{W^{2,q}(B_{\frac{\delta}{4}}(z))}
&\leq \frac{C}{\epsilon^2}\|f_{\epsilon}\|_{L^q(B_{\frac{\delta}{2}}(z))} + C\|v_{\epsilon}\|_{L^q(B_{\frac{\delta}{2}}(z))}\\
&\leq \frac{C}{\epsilon^2}\|v_{\epsilon}\|_{L^q(B_{\frac{\delta}{2}}(z))}\\
&\leq
\left\{
       \begin{array}{ll}
         Ce^{-c/\epsilon^{1-\delta}}, & \text{if}\ 0\leq\sigma < 1,\\
         C\epsilon^{\frac{N - 2 + \sqrt{(N - 2)^2 + 4m}}{2} - 2}, & \text{if}\ \sigma = 1,
       \end{array}
     \right.
\\
&=\epsilon^No_{\epsilon}(1),
\end{split}
\end{eqnarray*}
which and Sobolev embedding imply that
$$
|\nabla v_{\epsilon}(z)|^2  = \epsilon^No_{\epsilon}(1)\ \text{for all}\ z\in\partial B_{\delta}(x_{\omega,\epsilon}).
$$
Note that the estimate also holds for single $u^i_{\epsilon}$, $i = 1,2$. This gives the assumptions in Lemma \ref{LE4.6}.

\subsection{Faster decay or compactly supported case}
The decay estimates in this case indeed can be obtained by the argument in \cite{MV}. But because of our penalized function in Section \ref{s5} enjoys better decay rates, the proof can be more intuitive than that in \cite{MV}.

Our penalized function here is
$$
\frac{\epsilon^{5/2}}{|x|^{2 + 2\varrho}}\chi_{\mathbb{R}^N\backslash\Lambda},
$$
where $\varrho$ is given in \eqref{**}. Easily, replacing the penalized functions in \cite{MV} with $P_{\epsilon}$, we can prove by the same argument as that of Lemma 3.4 in \cite{MV} that the equation
\begin{equation}\label{Feq6.9}
  \left\{
    \begin{array}{ll}
      -\Delta u - \frac{\epsilon^{1/2}}{|x|^{2 + 2\varrho}}u = 0,& x\in\Lambda^c,\\
      u = 1,&x\in\partial\Lambda,
    \end{array}
  \right.
\end{equation}
has a solution $u$ if $\epsilon>0$ is small. Moreover, there exist constants $c,C>0$ such that
\begin{equation}\label{Feq6.10}
 \frac{c}{|x|^{N - 2}}\le u(x)\le \frac{C}{|x|^{N - 2}}.
\end{equation}

According to \eqref{Feq6.9} and \eqref{Feq6.10}, we let $\tilde{u}$ be the positive extension of ${u}{\cosh\textsf{m}_1(\frac{r}{\epsilon} - R)}$ satisfying $\tilde{u}_{\epsilon}(x) = {\cosh\textsf{m}_1(\frac{r}{\epsilon} - R)}$ if $d(x,\Lambda^c)\ge r$ and set
\begin{equation*}\label{}
  \hat{v}_{\epsilon}(x) =\left\{
                             \begin{array}{ll}
                               \cosh\frac{\textsf{m}_1(r - |x - x_{\omega,\epsilon}|)}{\epsilon}, &\text{if}\ x\in B_{r}(x_{\omega,\epsilon}), \\
                            \tilde{u}_{\epsilon}(x),   &\text{if}\ x\in B^c_r(x_{\omega,\epsilon}),
                             \end{array}
                           \right.
\end{equation*}
where
$$
\cosh t = \frac{e^t + e^{-t}}{2}.
$$
Then since $B_{2r}(x_{\omega,\epsilon})\subset\subset \Lambda$ if $\epsilon$ is small enough(see \eqref{space}), we have $\hat{v}_{\epsilon}\in C^2(\mathbb{R}^N)$. Moreover, by the same computation as that in Lemma 5.2 of \cite{MV}, we conclude that
\begin{equation}\label{Feq6.12}
  -\epsilon^2 \Delta \hat{v}_{\epsilon} - P_{\epsilon}\hat{v}_{\epsilon} + (1 - \delta)V_{min}\hat{v}_{\epsilon}\ge 0\ \text{in}\ \R^N\backslash B_{\epsilon R}(x_{\omega,\epsilon}).
\end{equation}
Then, letting
$$
\widehat{U}_{\epsilon}(x) = \frac{\hat{v}_{\epsilon}(x)}{\cosh\textsf{m}_1\Big(\frac{r}{\epsilon} - R\Big)},
$$
by \eqref{Feq6.12}, \eqref{eq5.5} and comparison principle in Lemma 3.2 in \cite{MV}, we have
$$
v_{\epsilon}(x)\le \widehat{C}\widehat{U}_{\epsilon}(x)\le Ce^{-\frac{\textsf{m}_1|x - x_{\omega,\epsilon}|}{\epsilon(|1 + |x - x_{\omega,\epsilon}|)}}\frac{1}{1 + |x|^{N - 2}}.
$$

Finally, for a $0<\delta<r$, it holds
\begin{eqnarray*}\label{}
\begin{split}
  v_{\epsilon}(x)
&\leq Ce^{-\frac{\textsf{m}_1|x - x_{\omega,\epsilon}|}{\epsilon(|1 + |x - x_{\omega,\epsilon}|)}} \ \text{for all}\ x\in B_{\delta}(x_{\omega,\epsilon}),
\end{split}
\end{eqnarray*}
by which and the same argument above, we get the assumptions in Lemma \ref{LE4.6}.


\begin{thebibliography}{99}

{\footnotesize

\bibitem{ABC-97-ARMA}
A. Ambrosetti, M. Badiale, S. Cingolani, \textit{Semiclassical states of nonlinear Schr\"odinger
equation}, Arch. Ration. Mech. Anal. \textbf{140} (1997), 285-300.

\bibitem{AE}
A. Ambrosetti, E. Colorado, \textit{Standing waves of some coupled nonlinear Schr\"odinger equations}, J. Lond. Math. Soc.
\textbf{75}(2007), 67-82.

\bibitem{AM-06}
A. Ambrosetti, A. Malchiodi, \textit{Perturbation Methods and Semilinear Elliptic Problems on
$R^n$}, Progr. Math. 240, Birkh\"auser, Basel (2006).

\bibitem{AM-061}
A. Ambrosetti, A. Malchiodi, D. Ruiz, \textit{Bound states of nonlinear Schr\"odinger equations with potentials
vanishing at infinity}, J. Anal. Math. \textbf{98}(2006), 317-348.

\bibitem{AMS-01-ARMA}
A. Ambrosetti, A. Malchiodi, S. Secchi, \textit{Multiplicity results for some nonlinear Schr\"odinger
equations with potentials}, Arch. Ration. Mech. Anal. \textbf{159}(2001), 253-271.

\bibitem{AA}
A. Ambrosetti, A. Malchiodi, \textit{Concentration phenomena for NLS: Recent results and new
perspectives, perspectives in nonlinear partial differential equations}, Contemp. Math.
\textbf{446}(2007), 19-30.

\bibitem{BaNa}
N. Ba, Y. Deng, S. Peng, \textit{Multi-peak bound states for Schr\"odinger equations with compactly
supported or unbounded potentials}, Ann. Inst. H. Poincare Anal. Non Lineaire
\textbf{27} (2010), 1205-1226.

\bibitem{B1}
T. Bartsch, \textit{Bifurcation in a multicomponent system of nonlinear Schr\"odinger equations}, J. Fixed Point Theory Appl. \textbf{13} (2013), 37-50.

\bibitem{BW}
T. Bartsch, Z.-Q. Wang, \textit{Note on ground states of nonlinear Schr\"odinger systems}, J. Partial Differential Equations
\textbf{19} (2006), 200-207.

\bibitem{B5}
T. Bartsch, Z.-Q. Wang, J. Wei, \textit{Bound states for a coupled Schr\"odinger system}, J. Fixed Point Theory Appl. \textbf{2 }(2007), 353-367.



\bibitem{BL-83-ARMA}
H. Berestycki, P.-L. Lions,  \textit{Nonlinear scalar fields equation I. Existence of a ground state},
Arch. Ration. Mech. Anal. \textbf{82}(1983), 313-346.

\bibitem{16}
D. Bonheure, S. Cingolani, M. Nys,  \textit{Nonlinear Schr\"{o}dinger equation: concentration on circles driven by an external maganetic field}, Calc. Var. Partial Differential Equations \textbf{55}(2016), 82, 33 pp.

\bibitem{30}
D. Bonheure, J. Van Schaftingen,  \textit{Groundstates for the nonlinear Schr\"{o}dinger equation with potential
vanishing at infinity}, Ann. Mat. Pura Appl. \textbf{189}(2010), 273-301.

\bibitem{Byeon-Wang}
J. Byeon, Z.-Q.Wang, \textit{Spherical semiclassical states of a critical frequency for Schr\"odinger
equations with decaying potentials}, J. Eur. Math. Soc. (JEMS) \textbf{8 }(2006), 217-228.


\bibitem{C-72-ARMA}
C. V. Coffman, \textit{Uniqueness of the ground state solution for $\Delta u-u+u^{3}=0$ and a
variational characterization of other solutions}, Arch. Ration. Mech. Anal. \textbf{46}(1972), 81-95.

\bibitem{CZ}
D. Cao, X. Zhu. \textit{The concentration-compactness principle in nonlinear elliptic equations},
Acta Math. Sci. (English Ed.) \textbf{9} (1989), 307-328.

\bibitem{Cao-Peng}
D. Cao, S. Peng, \textit{Semi-classical bound states for Schr\"odinger equations with potentials
vanishing or unbounded at infinity}, Comm. Partial Differential Equations \textbf{34} (2009), 1566-1591.


\bibitem{CP1}
V. Coti-Zelati  P. H., Rabinowitz, \textit{Homoclinic orbits for a second order Hamiltonian systems possessing superquadratic potentials}, J. Amer. Math. Soc. \textbf{4} (1991), 693-727.


\bibitem{9}
M. del Pino, P.L. Felmer, \textit{Local mountain passes for semilinear elliptic problems in unbounded domains},
Calc. Var. Partial Differential Equations \textbf{4}(1996), 121-137.

\bibitem{CP}
M. del Pino,  P.L. Felmer, \textit{Semi-classical states for nonlinear Schr\"{o}dinger equations}, J. Funct. Anal.
\textbf{149}(1997), 245-265.


\bibitem{FW-86-JFA}
A. Floer, A. Weinstein, \textit{Nonspreading wave packets for the cubic Schr\"{o}dinger equation with
a bounded potential}, J. Funct. Anal. \textbf{69} (1986), 397-408.

\bibitem{GT}
D. Gilbarg, N.S. Trudinger, \textit{Elliptic Partial Differential Equations of Second Order, volume 224 of
Grundlehren der Mathematischen Wissenschaften [Fundamental Principles of Mathematical Sciences]},
2nd edn. Springer, Berlin (1983).
\bibitem{GPS}
Y. Guo, S. Peng, S. Yan,
\textit{Local uniqueness and periodicity induced by concentration},
Proc. Lond. Math. Soc. \textbf{114} (2017), 1005-1043.


\bibitem{LLW}
L. Jeanjean, T.-J. Luo, Z.-Q. Wang, \text{Multiple normalized solutions for quasi-linear Schr\"odinger equations},
J. Differential Equations \textbf{259} (2015), 3894-3928.

\bibitem{LW1}
T.-C. Lin, J. Wei, \textit{Ground state of $N$-coupled nonlinear Schr\"odinger equations in $\mathbb{R}^n,\,\, n\leq 3$}, Comm. Math. Phys. \textbf{255} (2005), 629-653.


\bibitem{LW3}
Z. Liu, Z.-Q. Wang, \textit{Ground states and bound states of a nonlinear Schr\"odinger system}, Adv. Nonlinear Stud. \textbf{10}(2010), 175-193.


\bibitem{MMP}
L. A. Maia, E. Montefusco, B. Pellacci, \textit{Positive solutions for a weakly coupled
nonlinear Schr\"odinger system},  J. Differential Equations \textbf{229}(2006), 743-767.

\bibitem{PWY}
S. Peng, C. Wang, S. Yan, \textit{Construction of solutions via local Pohozaev identities}, J. Funct. Anal. \textbf{274} (2018), 2606-2633.

\bibitem{M2}
R. Mandel, \textit{Minimal energy solutions for cooperative nonlinear Schr\"odinger systems}, NoDEA Nonlinear Differ. Equ. Appl. \textbf{22}(2015), 239-262.

\bibitem{M}
C. R. Menyuk, \textit{Nonlinear pulse propagation in birefringent optical fibers}, IEEE J. Quantum
Electron. \textbf{23}(1987), 174-176.

\bibitem{MV}
V. Moroz, J. Van Schaftingen, \textit{Semiclassical stationary states for nonlinear Schr\"odinger equations with fast decaying potentials}, Calc. Var. Partial Differential Equations \textbf{37} (2010), 1-27



\bibitem{MPS}
E. Montefusco, B. Pellacci, M. Squassina,  \textit{Semiclassical states for weakly coupled nonlinear Schr\"odinger systems}, J. Eur. Math. Soc. \textbf{10} (2008), 47-71.

\bibitem{O-93-CMP}
Y. G. Oh, \textit{Existence of semiclassical bound state of nonlinear Schr\"odinger equations}, Comm.
Math. Phys. \textbf{209}(1993), 223-243.

\bibitem{PW}
S. Peng, Z.-Q. Wang, \textit{Segregated and synchronized vector solutions for nonlinear Schr\"odinger system}, Arch. Ration. Mech. Anal. \textbf{208} (2013), 305-339.

\bibitem{28}
M. Vitaly, J. Van Schaftingen, \textit{Semi-classical states for the Choquard equation}, Calc. Var. Partial Differential Equations \textbf{52} (2015), 199-235.

\bibitem{SW}
Y. Sato, Z.-Q. Wang, \textit{Least energy solutions for nonlinear Schr\"odinger systems with mixed attractive and repulsive couplings}, Adv. Nonlinear Stud. \textbf{15}(2015), 1-22.

\bibitem{S1}
B. Sirakov, \textit{Least energy solitary waves for a system of nonlinear Schr\"odinger equations in $\mathbb{R}^n$}, Comm. Math. Phys. \textbf{271}(2007), 199-221.

\bibitem{S3}
N. Soave, H. Tavares, \textit{New existence and symmetry results for least energy positive solutions of Schr\"odinger systems with mixed competition and cooperation terms},  J. Differential Equations \textbf{261} (2016), 505-537.

\bibitem{S-77-CMP}
W. A. Strauss, \textit{Existence of solitary waves in higher dimensions}, Comm. Math. Phys. \textbf{55} (1977),
149-162.

\bibitem{WZ-97-SIAM}
X. Wang, B. Zeng, \textit{On concentration of positive bound states of nonlinear Schr\"odinger
equations with competing potential functions}, SIAM J. Math. Anal. \textbf{28} (1997), 633-655.

\bibitem{WW}
J. Wei, T. Weth, \textit{Radial solutions and phase separation in a system of two coupled Schr\"odinger equations}, Arch. Ration. Mech. Anal. \textbf{190}(2008), 83-106.

\bibitem{MW}
M. Willem, \textit{Minimax Theorems}, Progr. Nonlinear Differential Equations Appl., vol. 24, Birkh$\ddot{a}$user, Boston, MA,
1996.

}
\end{thebibliography}
\end{document}